\def\listtodoname{List of Todos}
\def\listoftodos{\@starttoc{tdo}\listtodoname}
\newcommand{\mynewtheorem}[2]{
  \newaliascnt{#1}{subsubsection}
  \newtheorem{#1}[#1]{#2}
  \aliascntresetthe{#1}
  \expandafter\def\csname #1autorefname\endcsname{#2}
}
\newtheorem*{conjecture*}{Conjecture}
\newtheorem*{theorem*}{Theorem}
\newtheorem*{ncs*}{Statement}
\theoremstyle{definition}
\newcommand{\one}{\mathbf{1}}
\newcommand{\C}{\mathbb{C}}
\newcommand{\F}{\mathbb{F}}
\newcommand{\Z}{\mathbb{Z}}
\newcommand{\calw}{\mathcal{W}}
\newcommand{\calf}{\mathcal{F}}
\DeclareMathOperator{\Stab}{Stab}
\DeclareMathOperator{\tr}{tr}
\DeclareMathOperator{\GL}{GL}
\renewcommand{\hom}{\mathrm{Hom}}
\DeclareMathOperator{\Res}{Res}
\DeclareMathOperator{\ind}{Ind}
\DeclareMathOperator{\Ind}{Ind}
\DeclareMathOperator{\Hom}{Hom}
\DeclareMathOperator{\End}{End}
\DeclareMathOperator{\stab}{Stab}
\DeclareMathOperator{\mat}{Mat}
\DeclareMathOperator{\gl}{GL}
\DeclareMathOperator{\SL}{SL}
\DeclareMathOperator{\ev}{ev}
\DeclareMathOperator{\bil}{Bil}
\DeclareMathOperator{\tril}{Tril}
\DeclareMathOperator{\Rep}{Rep}
\DeclareMathOperator{\id}{id}
\newcommand{\flbar}{\overline{\mathbb{F}}_{\ell}}
\DeclareMathOperator{\diag}{diag}
\DeclareMathOperator{\soc}{soc}
\newcommand{\wt}{\widetilde}
\newcommand{\wh}{\widehat}
\newcommand{\Pbar}{\overline{P}}
\newcommand{\Ubar}{\overline{U}}
\newcommand{\paren}[1]{\left(#1\right)}
\newcommand{\set}[1]{\left\{#1\right\}}
\newcommand{\verts}[1]{\left\lvert#1\right\rvert}
\newcommand\restr[2]{{
  \left.\kern-\nulldelimiterspace 
  #1 
  \right|_{#2} 
  }}
\DeclareRobustCommand*{\lquotient}[3][]
{
  { \mathpalette{\mfaktor@impl@}{{#1}{#2}{#3}} }
}
\newcommand*{\mfaktor@impl@}[2]{\mfaktor@impl#1#2}
\newcommand*{\mfaktor@impl}[4]{
  \settoheight{\faktor@zaehlerhoehe}{\ensuremath{#1#2{#3}}}%
  \settoheight{\faktor@nennerhoehe}{\ensuremath{#1#2{#4}}}%
  \raisebox{-0.5\faktor@zaehlerhoehe}{\ensuremath{#1#2{#3}}}%
  \mkern-4mu\diagdown\mkern-5mu%
  \raisebox{0.5\faktor@nennerhoehe}{\ensuremath{#1#2{#4}}}%
}
\title[Mod $\ell$ gamma factors and a converse theorem]{Mod $\ell$ gamma factors and a converse theorem for finite general linear groups}
\author[\tiny{J. Bakeberg}]{Jacksyn Bakeberg}
\address[Jacksyn Bakeberg]{Department of Mathematics, Boston University}
\email{bakeberg@bu.edu}
\author[\tiny{M. Gerbelli-Gauthier}]{Mathilde Gerbelli-Gauthier}
\address[Mathilde Gerbelli-Gauthier]{Department of Mathematics and Statistics, University of Toronto}
\email{m.gerbelli@utoronto.ca}
\author[\tiny{H. Goodson}]{Heidi Goodson}
\address[Heidi Goodson]{Department of Mathematics, Brooklyn College, City University of New York; 2900 Bedford Avenue, Brooklyn, NY 11210 USA}
\email{heidi.goodson@brooklyn.cuny.edu}
\author[\tiny{A. Iyengar}]{Ashwin Iyengar}
\address[Ashwin Iyengar]{Department of Mathematics, Johns Hopkins University}
\email{ashwin.iyengar1@gmail.com}
\author[\tiny{G. Moss}]{Gilbert Moss}
\address[Gilbert Moss]{Department of Mathematics, University of Maine}
\email{gilbert.moss@maine.edu}
\author[\tiny{R. Zhang}]{Robin Zhang}
\address[Robin Zhang]{Department of Mathematics, Massachusetts Institute of Technology}
\email{robinz@mit.edu}
\date{August 14, 2025}
\begin{document}

\vspace*{-1.5cm}
\maketitle

\begin{abstract}
    The local converse theorem for Rankin--Selberg gamma factors of $\gl_2(\F_q)$
    proved by Piatetski-Shapiro over $\C$ no longer holds after reduction modulo $\ell \neq p$.
    To remedy this, we construct new $\gl_n \times \gl_m$ gamma factors valued in arbitrary $\mathbb{Z}[1/p, \zeta_p]$-algebras for Whittaker-type representations, show that they satisfy a functional equation,
    and then prove a $\gl_n \times \gl_{n-1}$ converse theorem for irreducible cuspidal representations.
    In the~$\gl_2 \times \gl_1$ case, we define an alternative ``new'' gamma factor,
    which takes values in~$k$ and satisfies a converse theorem that matches
    the converse theorem in characteristic $0$. 
\end{abstract}

\tableofcontents


\section{Introduction}

Let $\F_q$ be a finite field of order $q$ and characteristic $p$, and let $\ell$ be a prime different from $p$. In the $\ell$-modular representation theory of finite groups such as $\gl_n(\mathbb{F}_q)$, the importance of tools such as Brauer theory and Deligne--Lusztig varieties is well-established \cite{fong_srinivasan, dipper_james}. In this paper, we investigate a different tool, inspired by a construction in the local Langlands program for $p$-adic groups: gamma factors. While gamma factors first arose in the context of complex representations, they have been fruitful in studying $\ell$-modular representations of~$\gl_n(\mathcal{F})$, where $\mathcal{F}$ is a $p$-adic field with residue field~$\F_q$ (\cite{helm_moss_converse, moss}).  

Fix a nontrivial character $\psi:\F_q\to \C^{\times}$. Complex Rankin--Selberg gamma factors $\gamma(\pi \times \pi',\psi)\in \C^{\times}$ have been studied for pairs $\pi$, $\pi'$ where $\pi$ is a complex representation of $\gl_n(\F_q)$, $\pi'$ is a complex representation of $\gl_m(\F_q)$, and both $\pi$ and $\pi'$ are assumed to be irreducible and $\psi$-generic \cite{ps,roditty,nien,ye,ye-zelingher,zelingher,soudry-zelingher}. In this context, there are converse theorems inspired by the $p$-adic setting, which describe sets of $\pi'$ such that $\gamma(\pi\times \pi',\psi)$ uniquely determine $\pi$. 

Let $k$ be an algebraically closed field of characteristic $\ell$ or $0$. Our main result is a converse theorem for irreducible cuspidal $k$-representations. 
It uses a gamma factor valued in certain finite dimensional local $k$-algebras instead of $k$ itself.

\begin{maindef}
    \label{def:new-gamma-factor}
    For an irreducible generic $k$-representation $\pi'$ of $\gl_m(\F_q)$ with projective cover~$P(\pi')$,
    let $R(\pi') := \End_{k[\gl_m(\F_q)]}(P(\pi'))$. Define the gamma factor
    \[
        \widetilde{\gamma}(\pi\times\pi',\psi) := \gamma\Big(\big(\pi\otimes_k R(\pi')\big)\times P(\pi'),\psi\Big)\in R(\pi')^{\times}\ .
    \]
\end{maindef}

These $R(\pi')^\times$-valued gamma factors are defined by a functional equation, see \autoref{mainthm_functionaleqn}. We show that they satisfy:

\begin{mainthm}\label{mainthm_converse}
    Let $\pi_1$ and $\pi_2$ be irreducible cuspidal $k$-representations of $\gl_n(\F_q)$.
    If
    \[
        \widetilde{\gamma}(\pi_1\times \pi',\psi) = \widetilde{\gamma}(\pi_2\times \pi',\psi)
    \]
    for all irreducible generic $k$-representations $\pi'$ of $\gl_{n-1}(\F_q)$, then $\pi_1\cong \pi_2$. 
\end{mainthm}

We can also use the functional equation of \autoref{mainthm_functionaleqn} to define a simpler, ``na\"ive" gamma factor $\gamma(\pi \times \pi',\psi)$ without reference to exotic $k$-algebras (this is implicit in the use of the notation ``$\gamma$" in \autoref{def:new-gamma-factor}). Irreducible cuspidal representations in characteristic $\ell$ arise via reduction mod $\ell$ of integral lattices in representations defined over $\overline{\mathbb{Q}}$ \cite[III.2.2]{vig_book}, and the gamma factor $\gamma(\pi\times\pi',\psi)$ is simply the reduction mod $\ell$ of the associated gamma factor in $\overline{\mathbb{Z}}^{\times}$ (see \autoref{compatibility_with_specialization}, below). When $\ell\nmid \#\gl_n(\F_q)$, the representation theory of $\gl_n(\F_q)$ over $k$ is essentially no different than the complex setting: we have $P(\pi')=\pi'$, so $\widetilde{\gamma}(\pi\times\pi',\psi) = \gamma(\pi\times\pi',\psi)$, and \autoref{mainthm_converse} shows $\gamma(\pi\times\pi',\psi)$ satisfies a converse theorem
. However, when $\ell|\#\gl_n(\F_q)$, the na\"ive gamma factor $\gamma(\pi\times\pi',\psi)$ fails to satisfy the converse theorem in several examples when $n=2$. 

Using SAGE computations (\autoref{sec:sage}), we found that the converse theorem for $\GL_2(\F_q)$ fails when $ (\ell, q) = (2,5)$, $(2,17)$, $(3,7)$, $(3,19)$, $(5,11)$, $(11,23)$, $(23,47)$, $(29,59)$, though we verified it for all other pairs $(\ell,q)$ with $\ell \leq 11$ and $q = p \leq 23$. In all counterexamples we found, $q$ has the form $2\ell^k+1$, and we conjecture these are the only cases in which it can fail. The point of failure in the classical proof of the converse theorem is the failure of so-called ``$L^2$-completeness'' of the Whittaker space; this was observed in the $p$-adic setting in \cite{moss}.

In light of these counterexamples, \autoref{mainthm_converse} provides an answer to the natural question of how to uniformly construct a ``new'' gamma factor for all $\ell \neq p$ that satisfies a converse theorem in general and returns the classical gamma factor when $\ell \nmid \#\gl_n(\F_q)$. In \autoref{sec:computing new gamma} we compute the new gamma factors in the case $n=2$, $(\ell,q)=(2,5)$ and use the computation to illustrate \autoref{mainthm_converse} in this case.

The key to proving \autoref{mainthm_converse} is the observation that one can recover $L^2$-completeness of Whittaker models if one is allowed to pair with $R(\pi')$-valued Whittaker functions of projective envelopes $P(\pi')$ as $\pi'$ varies over generic irreducibles (see \autoref{completeness}). In order to make use of the classical arguments usually used to prove converse theorems, it remains to extend the functional equation defining $\gamma(\pi\times\pi',\psi)$ to rings of the form $R(\pi')$. In fact, we generalize it to arbitrary $\mathbb{Z}[1/p, \zeta_p]$-algebras $A$ where $\zeta_p$ is a primitive $p$-th root of unity and $\pi, \pi'$ are Whittaker-type representations of $\gl_n(\F_q)$ and $\gl_m(\F_q)$ on $A$-modules. Unlike the $p$-adic setting, there are some $\pi'$ that are ``exceptional'' for $\pi$ and for which the functional equation may fail; we use Bernstein--Zelevinsky derivatives to precisely describe these exceptional $\pi'$. Our construction of the gamma factor is compatible with extensions of scalars along ring homomorphisms $A\to A'$, returns the gamma factor of \autoref{def:new-gamma-factor} when $A = R(\pi')$, and returns the classical gamma factors when $A=\C$.

Bernstein and Zelevinsky developed a theory of ``derivatives'' for complex representations of $\gl_n(\mathcal{F})$ \cite{bz77} with respect to a fixed additive character $\psi$ on $\mathcal{F}$. Fixing $\psi:\F_q\to \mathbb{Z}[\frac{1}{p},\zeta_p]^{\times}$, Vign\'{e}ras observed that derivatives work equally well for $\gl_n(\F_q)$-representations on $A$-modules. If $\pi$ is an $A[\gl_n(\F_q)]$-module its ``$i$-th derivative''  $\pi^{(i)}$ is a representation of $\gl_{n-i}(\F_q)$, and the restriction $\pi|_{P_n}$ to the mirabolic subgroup $P_n$ (matrices with bottom row $(0,\dots,0,1)$) is glued from $\pi^{(1)},\dots, \pi^{(n)}$ in a simple way. The top derivative $\pi^{(n)}$ is equivalent to the $(N_n,\psi)$-coinvariants, where $N_n$ is the unipotent upper triangular subgroup, to which $\psi$ is extended in a natural way. Thus $(\pi^{(n)})^{\vee}$ is the space of Whittaker models of $\pi$, by Frobenius reciprocity. The starting point of our construction is to restrict our attention to $\pi$ of \emph{Whittaker type}, meaning $\pi^{(n)}$ (and hence $(\pi^{(n)})^{\vee}$) is free of rank one over $A$, which generalizes the ``irreducible generic'' hypothesis ubiquitous in the $A = \mathbb{C}$ case. In particular, this allows one to speak of the Whittaker model $\mathcal{W}(\pi,\psi_A)\subset \Ind_N^G\psi_A$, where $\psi_A = \psi\otimes_{\mathbb{Z}[\frac{1}{p},\zeta_p]}A$. 

Define an $A[\gl_m(\F_q)]$-module $\pi'$ to be \emph{exceptional} for~$\pi$ if there exists $k=1,\dots, m$ such that
\[
    \Hom_{A[\GL_k(\F_q)]}\left(\mathcal{W}(\pi,\psi_A)^{(n-k)},\left(\mathcal{W}(\pi',\psi^{-1}_A)^{(m-k)}\right)^{\vee}\right)\neq 0.
\]
We obtain a functional equation that is more general when specializing to $k = \C$
than any appearing in the literature because $\pi$ need not be cuspidal, nor even irreducible.
This gives a construction of gamma factors of which \autoref{def:new-gamma-factor} is a special case.

\begin{mainthm}
\label{mainthm_functionaleqn}
Suppose $\pi$ and $\pi'
$ are Whittaker type $A[\gl_n(\F_q)]$- and $A[\gl_m(\F_q)]$-modules, respectively, and $\pi'$ is not exceptional for $\pi$. There exists a unique element $\gamma(\pi\times\pi',\psi)$ of $A^{\times}$ such that
\begin{multline}\label{functional_equation}
\left(\sum_{x\in N_m\backslash G_m}W\left(\begin{smallmatrix}x & 0\\ 0 & I_{n-m}\end{smallmatrix}\right)W'(x)\right)\gamma(\pi\times \pi',\psi) \\
= \sum_{x\in N_m\backslash G_m}\sum_{y\in M_{m, n-m-1}}W\left(\begin{smallmatrix}0 & 1 &0\\
0&0 & I_{n-m-1}\\ x &0 & y\end{smallmatrix} \right)W'(x)
\end{multline}
for all $W\in \mathcal{W}(V,\psi_A)$, $W'\in \mathcal{W}(V',\psi_A^{-1})$.
\end{mainthm}

When $A=k$ is a field, an irreducible representation $\pi$ of $\gl_n(\F_q)$ is cuspidal if and only if $\pi^{(n)}$ is its only non-zero derivative, in which case there are no exceptional representations~$\pi'$. The functional equation in \autoref{mainthm_functionaleqn} is known to fail when the non-exceptional hypothesis for $\pi'$ is removed by \cite[Theorem 3.1]{roditty}.

In \autoref{sec: ell-regular gamma factors}, we propose an alternative ``new gamma factor'' for $n=2$ and $m=1$, defined over the base field $k$, which also specializes to the classical one for $\ell\nmid \#\gl_n(\F_q)$ but without involving Artinian local $k$-algebras. Remarkably, it also satisfies a functional equation and converse theorem for cuspidals. This method shares some similarities with \cite{vigneras_new_epsilon}, including the fact that it does not appear to generalize beyond $n=2$.

\subsection{Future directions}

\subsubsection{Macdonald correspondence in families}

In \cite{macdonald}, Macdonald established an analogue of the local Langlands correspondence for~$\gl_n(\F_q)$. If $\mathcal{F}$ is a nonarchimedean local field with residue field~$\F_q$, it can be formulated as a bijection between complex irreducible representations of~$\gl_n(\F_q)$ and tame inertial classes of complex representations of the Weil group of~$\mathcal{F}$. This bijection preserves gamma factors, which (following \cite{springer_zeta}) Macdonald defined analogously to the Godement--Jacquet factors for representations of $\gl_n(\mathcal{F})$. Later, Vign\'{e}ras found a similar but more subtle bijection in the mod $\ell$ setting (\cite[Section 3]{vigneras_luminy}), but she did not consider gamma factors. More recently, a local Langlands correspondence ``in families'' has been established for $\gl_n(\F_q)$. If $\mathcal{O}$ is a complete discrete valuation ring with residue field $\overline{\mathbb{F}}_{\ell}$, it takes the form of an isomorphism of commutative rings $B_{q,n}\xrightarrow{\sim} \End_{\mathcal{O}[\gl_n(\F_q)]}(\ind_{N_n}^{\gl_n(\F_q)}\psi_{\mathcal{O}})$, where $B_{q,n}$ is the ring of functions on a natural moduli space of tame $\ell$-adically continuous $\mathcal{O}$-valued inertial classes, and $\psi_{\mathcal{O}}:N_n\to \mathcal{O}^{\times}$ is a nondegenerate character on the unipotent upper triangular subgroup $N_n$. The first approach to proving the existence of such an isomorphism was to deduce it as a consequence of the local Langlands correspondence in families for~$\gl_n(\mathcal{F})$ \cite{helm_moss_converse,helm_curtis} (see also \cite{shotton}) which, in turn, requires gamma factors, converse theorems, and the classical local Langlands correspondence for $\gl_n(\mathcal{F})$, as an input. More recently, Li and Shotton found a remarkable second proof of ``finite fields local Langlands in families,'' which works for any reductive group $G(\F_q)$ whose dual group has simply connected derived subgroup. Their proof uses purely finite fields methods (\cite{li, li_shotton}), but they do not consider gamma factors. The present paper is a first step toward understanding how Rankin--Selberg gamma factors for $\gl_n(\F_q)$ fit into the $\ell$-modular correspondence and the families correspondence.

In future work, the authors plan to apply the converse theorems proved here to address the question of whether the Macdonald bijection and its mod $\ell$ analog are the unique sequence of bijections (one for each~$n$) matching the Rankin--Selberg gamma factors $\gamma(\pi\times\pi',\psi)$ defined here with Deligne's $\epsilon_0$ factors of the tensor product of the corresponding inertial classes of $W_{\calf}$-representations. To show the local Langlands correspondence for $\gl_n(\F_q)$ in families preserves our new gamma factors (and is uniquely characterized by this property), one would need to establish a compatibility between Curtis homomorphisms, which were used in \cite{helm_curtis} and \cite{li_shotton} to construct the local Langlands in families, and the Rankin--Selberg gamma factors. It seems a multiplicativity property for our gamma factors would be needed here.

\subsubsection{Converse theorem for generic irreducibles}
Our proof of \autoref{mainthm_converse} only works for irreducible cuspidals because such representations $\pi$ have no exceptional representations $\pi'$. When~$\pi'$ is exceptional for $\pi$, the gamma factor $\gamma(\pi\times\pi',\psi)$ can no longer be defined using a functional equation, but in the complex setting it is traditionally defined using Bessel vectors (\cite{roditty,nien}). Recently, Soudry and Zelingher \cite{soudry-zelingher} proved a multiplicativity property for $\gamma(\pi\times\pi',\psi)$ thus defined, and used it to deduce a converse theorem applying to all irreducible generic representations $\pi$. In future work we plan to investigate the question of whether this remains true in the mod $\ell$ setting, either by establishing a generalization of the Bessel vector construction and the multiplicativity property to characteristic $\ell>0$, or by using the functional equation even while excluding exceptional $\pi'$.

\subsubsection{Jacquet's conjecture in the mod $\ell$ setting}
In the complex setting, it has been proved that $\pi$ is characterized by gamma factors $\gamma(\pi\times\pi',\psi)$ where $\pi'$ ranges over irreducible generic representations of $\gl_m(\F_q)$ for $m=1,\dots,\lfloor \frac{n}{2}\rfloor$ (\cite{nien} for irreducible cuspidal $\pi$, and \cite{soudry-zelingher} in general). It is natural to ask whether our \autoref{mainthm_converse} remains true with $m\leq \lfloor\frac{n}{2}\rfloor$; the answer is probably yes (c.f. \cite{moss} in the $p$-adic setting), but we do not to address this here.


\subsection*{Acknowledgments}
This project began as part of the Rethinking Number Theory 2 workshop. The authors are deeply grateful to the organizers for the mathematical experience and the welcoming community. The workshop, as well as the continued work on this project after the workshop, was generously supported by the Number Theory Foundation and the American Institute of Mathematics.

The authors are grateful to Rob Kurinczuk for several helpful conversations and to an anonymous referee whose suggestions improved the exposition and quality of the paper.

J.B. was supported by National Science Foundation Grants DGE-1840990 and DGE-2234657, 
H.G. was supported by National Science Foundation Grant DMS-2201085 and a PSC-CUNY Award, jointly funded by The Professional Staff Congress and The City University of New York,
G.M. was supported by National Science Foundation Grants DMS-2001272 and DMS-2234339, 
R.Z. was supported by National Science Foundation Grants DGE-1644869 and DMS-2303280.


\section{Preliminaries}

In this section we collect some basic facts about the representation theory of $\GL_n(\F_q)$.

\subsection{Subgroups of \texorpdfstring{$\gl_n(\F_q)$}{GL(n, Fq)}}

Let $p$ be a prime, $q$ be a power of $p$, and $\F_q$ be the field with~$q$ elements. For $n$ a positive integer, let \[ G_n \colonequals  \gl_n(\F_q). \] Denote by $\mat_{m_1,m_2}(\F_q)$ the vector space of $m_1\times m_2$ matrices over $\F_q$. 
 The mirabolic subgroup of $G_n$ is
    \[ P_n \colonequals \set{\begin{pmatrix} g & y \\ 0 & 1 \end{pmatrix} \in G_n : g \in G_{n-1}, y \in \mat_{n-1,1}(\F_q)}, \]
with unipotent radical  \[U_n \colonequals  \set{\begin{pmatrix} I_{n-1} & y \\ 0 & 1 \end{pmatrix} : y \in \mat_{n-1,1}(\F_q)} \leq P_n, \] so that
   $ P_n = U_n \rtimes G_{n-1}$. We also denote \[ N_n \colonequals  \left(\begin{array}{cccc}
     1& * &\hdots & *\\
     0& 1 & \ddots & \vdots \\
     \vdots& \ddots&\ddots &* \\
     0 & \hdots & 0& 1
\end{array}\right) \] the subgroup of unipotent upper-triangular matrices in $G_n$.  We consider a sequence of subgroups interpolating between $U_n$ and $N_n$: for $-1 \leq m \leq n-1$, define $$U_{n,k}:=\left\{\begin{pmatrix} I_{n-k} & z\\ 0 & y\end{pmatrix}\ :\ z\in \text{Mat}_{n-k, k}(\mathbb{F}_q),\ y\in N_{k} \right\}.$$ 
Note that \[ I_n = U_{n,0}, \quad U_n = U_{n,1}, \quad N_n = {U_{n,n-1}} = U_{n,n}. \] 
\subsection{Representations}\label{section_representations}
Let $G$ be a finite group. In this article, our coefficient rings $R$ will always be assumed to be algebras over $\mathbb{Z}[\frac{1}{p},\zeta_p]$. Let $\Rep_R(G)$ denote the category of $R$-linear representations, or, equivalently, of $R[G]$-modules. An object of $\Rep_R(G)$ is denoted variously as an $R[G]$-module $V$, as a pair $(\pi,V)$ where $V$ is an $R$-module and $\pi: G \to \mathrm{Aut}_R(V)$ is a homomorphism, or simply as $\pi$ when the $R$-module $V$ is clear from context. We warn the reader that we will often use the letter $V$ to denote an $R[G]$-module, even if $V$ is not necessarily free as an $R$-module. If $H \leq G$ is a subgroup, the induction functor $\Ind_H^G: \Rep_R(H) \to \Rep_R(G)$ sends $(\pi,V)$ to the representation
    \[ \Ind_H^G(\pi) = \set{f: G \to V : f(hg) = \pi(h)f(g), \; h \in H}, \]
with its natural left $G$-action by right multiplication on $G$. Frobenius reciprocity is the statement that induction is a left-adjoint to restriction: given~$\rho \in \Rep_R(H)$ and $\pi \in \Rep_R(G)$,
\[ \Hom_G(\Ind_H^G\rho, \pi) \cong \Hom_H(\rho, \pi|_H). \]
The group ring $R[G]$ is equipped with a natural left $H$-action, which makes $\Ind_H^G(\pi)$ naturally isomorphic to $\Hom_{R[H]}(R[G],\pi)$ as left $R[G]$-modules, which some authors call ``coinduction''. However the distinction is unimportant because of the isomorphism given by \begin{align*}
R[G]\otimes_{R[H]}\pi &\xrightarrow{\sim} \Ind_H^G(\pi)\\
1\otimes v &\mapsto f_v ,
\end{align*}
where $v$ is an element in the space of $\pi$ and $f_v$ is the function supported on $H$ such that $f_v(h) = \pi(h)v$, $h\in H$. In particular, induction is also a right adjoint to restriction.

If $N \leq G$ is a subgroup such that $\verts{N}$ is invertible in $R$, and $\psi: N\to R^{\times}$ is a character, we define a projector to the submodule $\pi^{N,\psi}$ of elements on which~$N$ acts via $\psi$:
\begin{align}\label{coinvariants_projector}
\pi &\to \pi^{N,\psi}\\
v&\mapsto |N|^{-1}\sum_{n\in N} \psi(n)^{-1}\pi(n)v. \nonumber
\end{align}
The kernel of this projector equals the submodule $V(N,\psi)$ generated by $\{\pi(n)v-\psi(n)v:n\in N, v\in V\}$, so $\pi^{N,\psi}$ is canonically isomorphic to the $(N,\psi)$-coinvariants $\pi_{N,\psi}\colonequals V/V(N,\psi)$. 

Since induction is isomorphic to coinduction for representations of finite groups $H \leq G$, $(\Ind_{H}^{G} \sigma)^\vee\cong \Ind_{H}^{G} \sigma$ by Frobenius reciprocity. We use this to show that the dual of the $(N, \psi)$-coinvariants is the $(N, \psi^{-1})$-coinvariants of the dual
(see \cite[\S I.5.11]{vig_book} for locally profinite groups in general).

\begin{lemma}\label{top_deriv_dual}
Let $N$ be a subgroup whose cardinality is invertible in $R$ and let~$\psi:N\to R^{\times}$ be a character. Then for all 
$\pi \in \Rep_R(G)$, we have $(\pi_{N,\psi})^{\vee} \cong (\pi^{\vee})_{N,\psi^{-1}}$.
\end{lemma}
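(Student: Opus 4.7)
The plan is to reduce the claim to a transparent identity between idempotents on $\pi$ and $\pi^\vee$. Since $|N|$ is invertible in $R$, formula (\ref{coinvariants_projector}) exhibits a canonical $R$-linear idempotent $e_\psi \in \End_R(\pi)$ with image $\pi^{N,\psi}$, yielding the canonical identification $\pi_{N,\psi} \cong \pi^{N,\psi} = e_\psi\pi$ already recorded in the excerpt. Applying the same construction to $\pi^\vee$ with the character $\psi^{-1}$ produces an idempotent $e^\vee \in \End_R(\pi^\vee)$ whose image is $(\pi^\vee)^{N,\psi^{-1}} \cong (\pi^\vee)_{N,\psi^{-1}}$.

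The heart of the argument is then to verify that $e^\vee$ coincides with the $R$-linear transpose of $e_\psi$. Using the contragredient convention $\pi^\vee(n)f = f\circ \pi(n^{-1})$, one computes
\begin{equation*}
f(e_\psi v) = |N|^{-1}\sum_{n\in N}\psi(n)^{-1}f(\pi(n)v) = |N|^{-1}\sum_{m\in N}\psi(m)(\pi^\vee(m)f)(v) = (e^\vee f)(v)
\end{equation*}
after the change of variables $m = n^{-1}$; the last equality uses that the projector for the character $\psi^{-1}$ on $\pi^\vee$ given by (\ref{coinvariants_projector}) is $|N|^{-1}\sum_m (\psi^{-1})(m)^{-1}\pi^\vee(m) = |N|^{-1}\sum_m \psi(m)\pi^\vee(m)$. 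Consequently, restriction $f \mapsto f|_{e_\psi\pi}$ sends $e^\vee\pi^\vee$ isomorphically onto $(e_\psi\pi)^\vee$, yielding the desired natural isomorphism.

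The only real subtlety is bookkeeping: the substitution $n \mapsto n^{-1}$ is exactly what converts the $\psi^{-1}$ appearing in the projector on $\pi$ into the $\psi$ needed to match the projector for $\psi^{-1}$ on $\pi^\vee$. Beyond this sign-tracking, the argument is routine, powered entirely by the Maschke-style splitting of $R[N]$-modules afforded by the hypothesis that $|N|$ is invertible in $R$.
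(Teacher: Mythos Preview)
Your argument is correct and is genuinely different from the paper's. The paper proves the lemma by a chain of adjunctions: it identifies $(\pi_{N,\psi})^\vee$ with $\Hom_{R[N]}(\pi,\psi)$, passes through Frobenius reciprocity to $\Hom_{R[G]}(\pi,\Ind_N^G\psi)$, applies the swap $\Hom_{R[G]}(V,W^\vee)\cong\Hom_{R[G]}(W,V^\vee)$ together with $(\Ind_N^G\psi)^\vee\cong\Ind_N^G\psi^{-1}$, and then unwinds Frobenius reciprocity on the other side. Your approach bypasses the ambient group $G$ entirely: you work directly with the averaging idempotent $e_\psi$ and verify by a one-line computation that its $R$-linear transpose on $\pi^\vee$ is exactly the projector $e^\vee$ for $(N,\psi^{-1})$, whence $(e_\psi\pi)^\vee\cong e^\vee\pi^\vee$. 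This is more elementary and makes the isomorphism completely explicit; the paper's route, by contrast, situates the statement inside the standard duality calculus for induced representations, which is convenient when one later wants to chain such identities together (as in the proof of \autoref{derivative_dual}). Both arguments ultimately rest on the same hypothesis that $|N|\in R^\times$, which is what furnishes the splitting in either language.
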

\begin{proof}
We make use of the fact that for a finite group $G$, $\Hom_{R[G]}(V,W^{\vee})\cong \Hom_{R[G]}(W,V^{\vee})$ (\cite[\S I.4.13]{vig_book}), where $(-)^{\vee}$ denotes the $R$-linear dual equipped with its natural $G$-action. Applying this, 
we have the following identifications
\begin{align*}
(\pi_{N,\psi})^{\vee} &\overset{\text{def}}{=} \Hom_{R}(\pi_{N,\psi},R)\\
&= \Hom_{R[N]}(\pi,\psi)\\
&=\Hom_{R[G]}(\pi,\Ind_N^G\psi)\\
&=\Hom_{R[G]}(\Ind_N^G\psi^{-1},\pi^{\vee}) \\
&=\Hom_{R[N]}(\psi^{-1},\pi^{\vee})\\
&=(\pi^{\vee})^{N,\psi^{-1}}\\
&=(\pi^{\vee})_{N,\psi^{-1}}. \qedhere
\end{align*}
\end{proof}

 Given a nontrivial partition $n_1 + \cdots + n_r$ of $n$, there is an associated standard parabolic subgroup $P_{n_1,...,n_r}$ with Levi subgroup $G_{n_1} \times \cdots \times G_{n_r}$. If $\sigma_i$ is a representation of $G_{n_i}$, then the parabolic induction    \[ \sigma_1 \times \cdots \times \sigma_r := \ind_{P_{n_1,...,n_r}}^{G_n} \sigma_1 \boxtimes \cdots \boxtimes \sigma_r \]
is obtained by first inflating $\sigma_1 \boxtimes \cdots \boxtimes \sigma_r$ to a representation of $P_{n_1,...,n_r}$ by letting its unipotent radical act trivially, and inducing the resulting representation to $G_n$.

The corresponding ``parabolic restriction" functors are known as Jacquet functors. Given a partition as above, the functor $J^{G_n}_{P_{n_1,...,n_r}}: \Rep_R(G_n) \to \Rep_R(G_{n_1} \times \cdots \times G_{n_r})$ takes a representation $(\pi,V) \in \Rep_R(G)$ to its coinvariants under the unipotent radical of $P_{n_1,...,n_r}$. 
The functor $J^{G_n}_{P_{n_1, ..., n_r}}$ is both left- and right-adjoint to parabolic induction. 

We say that $(\rho,V) \in \Rep_R(G_n)$ is \textit{cuspidal} if its image under the Jacquet functor~$J^{G_n}_{P_{n_1, \dots, n_r}}$ is zero for every non-trivial partition. This is equivalent to asking that there are no nonzero morphisms from $\rho$ to a parabolic induction. 

\subsection{Multilinear forms}

Gamma factors are defined as the constants of proportionality between certain multilinear forms, once the spaces of such forms are shown to be one-dimensional. We define those spaces now.

\begin{definition}
If $G$ is a group, $(\rho,V),(\rho',V'),(\rho'',V'') \in \Rep_R(G)$ and $\chi: G \to R^\times$ is a linear character, let
\begin{align*}
    \bil_G(V,V',\chi) &:= \Hom_{R[G]}(V \otimes_R V', \chi) \\
    &= \{\text{bilinear functions }B:V\times V'\to R \ |\ B(gv, gv')=\chi(g)B(v,v')\}.
\end{align*}
and let
\begin{align*}
    \tril_G(V,V',V'') &:= \Hom_{R[G]}(V \otimes_R V' \otimes_R V'', 1) \\
    &= \{\text{$G$-invariant trilinear functions }B:V \times V' \times V'' \to R \}.
\end{align*}
\end{definition}

In the above definitions $G$ acts diagonally on the tensor products.

\subsection{Derivative functors}
\label{ss derivative functors}
Let $R$ be a Noetherian commutative $\mathbb{Z}[\frac{1}{p}, \zeta_p]$-algebra such that $0 \neq 1$ in $R$. Fix once and for all a nontrivial group homomorphism $\psi:\mathbb{F}_q \to \mathbb{Z}[\frac{1}{p}, \zeta_p]^{\times}$ and denote by~$\psi_R$ its extension to $R^{\times}$ along the structure morphism $\mathbb{Z}[\frac{1}{p}, \zeta_p]\to R$. Promote $\psi_R$ to a character of $U_n$ (also denoted $\psi_R$ by abuse of notation) by letting
\[
    \psi_R \begin{pmatrix}
        I_{n-1} & y \\ 
        0&1
    \end{pmatrix} = \psi_R(y_{n-1}), \quad y = (y_1,...,y_{n-1})^t.
\]

To analyze representations of the mirabolic subgroup $P_n$, we recall \textit{derivative functors}, following Bernstein--Zelevinsky \cite{bz77} for $p$-adic general linear groups.

Specifically, define the functors
\begin{center}
    \begin{tikzcd}
        \Rep_R(P_{n-1}) \ar[r,  yshift=0.7ex, "\Phi^+"] & \ar[l,  yshift=-0.7ex, "\Phi^{-}"] \Rep_R(P_n) \ar[r,  yshift=-0.7ex, "\Psi^{-}"'] & \ar[l,  yshift= 0.7ex, "\Psi^{+}"'] \Rep_R(G_{n-1})
    \end{tikzcd}
\end{center}
where

\begin{itemize}
\item $\Psi^-(V) = V/V(U_n,\one)$ where $V(U_n,\one) = \langle\{uv - v: u\in U_n, v\in V\}\rangle$. It carries an action of $G_{n-1}$.

\item $\Psi^+(V) = V$ and we inflate the $G_{n-1}$ action to a $P_n$ action by letting $U_n$ act trivially.

\item $\Phi^-(V) = V/V(U_n,\psi_R)$ where $V(U_n,\psi_R) = \langle\{uv - \psi_R(u)v: u\in U_n, v\in V\}\rangle$. It carries an action of $P_{n-1}$ because $P_{n-1}$ is the stabilizer in $G_{n-1}$ of the character $\psi_R$ of $U_n$ under the conjugation action defined by $\psi_R\mapsto \psi_R(g(-)g^{-1})$.

\item $\Phi^+(V) = \ind_{P_{n-1}U_n}^{P_n}(V\otimes\psi_R)$ where $V\otimes\psi_R$ denotes the representation of $V$ extended to $P_{n-1}U_n$ by letting $U_n$ act via $\psi_R$. Since $P_{n-1}$ is the normalizer of $\psi_R$ this is well-defined.
\end{itemize}

\subsubsection{Properties of derivative functors} \label{ss properties of BZ functors}
Bernstein--Zelevinsky established some basic properties of these functors over $p$-adic general linear groups, and Vign\'{e}ras has observed that the proofs work equally well in the case of finite general linear groups \cite[\S III.1.3]{vig_book}. The properties we will need are the following:
\begin{enumerate}[label = (\Roman*)]
\item They are all exact.
\item $\Psi^-$ is left adjoint to $\Psi^+$
\item $\Phi^+$ is left adjoint to $\Phi^-$ and $\Phi^-$ is left adjoint to $\Phi^+$.
\item $\Phi^-\Phi^+\cong \id$ and $\Psi^-\Psi^+\cong \id$ 
\item $\Phi^-\Psi^+ = 0$ and $\Psi^-\Phi^+ = 0$
\item There is a canonical exact sequence
$$0\to \Phi^+\Phi^- \to \id \to \Psi^+\Psi^-\to 0.$$
\end{enumerate}
We note the following additional property:

\begin{enumerate}[resume]
\item[(VII)] All the functors commute with arbitrary base change. In other words, if $R\to R'$ is a map of rings, then $\Phi^+(V\otimes_{R}R') = \Phi^+(V)\otimes_RR'$, and the same for all the other functors.
\end{enumerate}

\begin{definition}
Given $V\in \Rep_R(P_n)$, define the ``$k$-th derivative''
    \[ V^{(k)} = \Psi^-(\Phi^-)^{k-1}(V), \]
which is in $\Rep_R(G_{n-k})$.  For $V\in \Rep_R(G_n)$, $V^{(k)}$ is defined to be the $k$-th derivative of its restriction to $P_n$. Finally, we define $V^{(0)}=V$ for $V\in \Rep_R(G_n)$.
\end{definition}

By successive application of property (VI) above, any $V\in \Rep_R(P_n)$ has a natural filtration by $P_n$-submodules:
\begin{equation}\label{bzfiltration}
0\subset V_n\subset V_{n-1} \subset \cdots \subset V_2\subset V_1 = V,
\end{equation}
where $V_k = (\Phi^+)^{k-1}(\Phi^-)^{k-1}(V)$. The successive quotients can be recovered from the derivatives of $V$ as follows:
\begin{equation}\label{eq quotients from derivatives}  V_k/V_{k+1} = (\Phi^+)^{k-1}\Psi^+(V^{(k)}).\end{equation}
This indicates the following remarkable fact: every representation of $P_n$ is ``glued together from'' representations of various $G_m$'s for $m<n$.

The next two lemmas give explicit descriptions of the derivative functors in terms of coinvariants and parabolic restriction. 

Let $k \leq n-1$. Extend $\psi_R$ to a character of $U_{n,k}$ via the map \begin{align*} U_{n,k}  \to U_{n,k}/[U_{n,k},U_{n,k}]  \cong  \F_q^{k} &\to \F_q \\
(y_1,...,y_k) &\mapsto y_1+ .. + y_k, \end{align*} so that in input of $\psi_R$ is the sum of all the upper-diagonal entries, $n-k-1$ of which are zero.

\begin{lemma} \label{top derivative explicit}
Suppose $(\rho,V) \in \Rep_R(P_n)$. Then 
$(\Phi^-)^k V \cong V_{U_{n,k},\psi_R}$, the space of $(U_{n,k},\psi_R)$-coinvariants. In particular, the $n$-th derivative $V^{(n)} \cong V_{N_n,\psi_R}$.
\end{lemma}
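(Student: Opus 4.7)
The plan is to proceed by induction on $k$. The base case $k=1$ is tautological since $U_{n,1} = U_n$ and $\Phi^-(V) = V/V(U_n,\psi_R)$ by definition.

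For the inductive step, I would first record the group-theoretic structure of $U_{n,k+1}$. A direct block-matrix computation shows that every element of $U_{n,k+1}$ factors uniquely as a product of an element of $U_{n-1,k}$ (embedded in $P_n$ via the block inclusion $g \mapsto \operatorname{diag}(g,1)$) followed by an element of $U_n$. Since $U_n$ is normal in $P_n$, it is normal in $U_{n,k+1}$, with quotient identified to $U_{n-1,k}$. I would also check, again by direct computation, that the character $\psi_R$ on $U_{n,k+1}$ (the sum of the relevant superdiagonal entries) restricts to $\psi_R$ on both $U_n$ and $U_{n-1,k} \leq P_{n-1}$, so that the restrictions are compatible. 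Since $U_{n-1,k} \leq P_{n-1}$ preserves $\psi_R$ on $U_n$ under conjugation, it acts naturally on the coinvariant space $V_{U_n,\psi_R}$.

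Next, I would invoke the standard categorical identity: if $H \trianglelefteq K$, $\chi$ is a character of $K$, and $L \leq K$ is a set-theoretic complement to $H$, then $V_{K,\chi} \cong (V_{H,\chi|_H})_{L,\chi|_L}$. This is immediate from the observation that the submodule $V(K,\chi)$ generated by the elements $kv - \chi(k)v$ equals $V(H,\chi|_H) + V(L,\chi|_L)$. Applying this to $K = U_{n,k+1}$, $H = U_n$, $L = U_{n-1,k}$, and using the compatibility checks above, yields
\[ V_{U_{n,k+1},\psi_R} \;\cong\; \left(V_{U_n,\psi_R}\right)_{U_{n-1,k},\psi_R} \;=\; (\Phi^-V)_{U_{n-1,k},\psi_R}. \]
Since $\Phi^-V \in \Rep_R(P_{n-1})$, the inductive hypothesis (with the same $k$, applied to a $P_{n-1}$-representation) identifies this with $(\Phi^-)^k(\Phi^-V) = (\Phi^-)^{k+1}V$, completing the induction.

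For the ``in particular'' statement, I would take $k = n-1$ so that $U_{n,n-1} = N_n$, and observe that $V^{(n)} = \Psi^-(\Phi^-)^{n-1}V = (\Phi^-)^{n-1}V$, the last equality because the group $U_1 \leq P_1$ is trivial and hence $\Psi^-$ acts as the identity on $\Rep_R(P_1)$. The main obstacle in the argument, though essentially routine bookkeeping, is the block-matrix verification of the semidirect-product structure $U_{n,k+1} = U_n \rtimes U_{n-1,k}$ together with the character compatibility; once these are in hand, the rest is formal manipulation of coinvariants via the iteration lemma.
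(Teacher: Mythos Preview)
Your proof is correct. Both your argument and the paper's proceed by induction on $k$ and reduce to an ``iterated coinvariants'' identity coming from a semidirect product decomposition of $U_{n,k}$; the difference is which factor you peel off. The paper writes $U_{n,k} = U_{n,k-1} \rtimes U_{n-k+1}$ (with $U_{n,k-1}$ normal) and applies the inductive hypothesis to the same $P_n$-representation $V$, reducing $k$ by one. You instead write $U_{n,k+1} = U_n \rtimes U_{n-1,k}$ (with $U_n$ normal) and apply the inductive hypothesis to the $P_{n-1}$-representation $\Phi^-V$, keeping $k$ fixed but dropping $n$. Your decomposition has the mild advantage that normality of $U_n$ in $P_n$ is already known, whereas the paper must verify normality of $U_{n,k-1}$ in $U_{n,k}$ directly; on the other hand the paper's induction stays within $\Rep_R(P_n)$ for a fixed $n$. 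In either case the essential content is the same one-line computation showing $V(K,\chi) = V(H,\chi|_H) + V(L,\chi|_L)$ for a semidirect product $K = H \rtimes L$ with $\chi|_H$ fixed by $L$-conjugation.
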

\begin{proof} 
Recall that for a subgroup $H$ of $G_n$,  \[ V_{H,\psi_R} = V/V(H,\psi_R):= V/\langle\{\psi_R(h)v-hv,  H \in N_n, v \in V\}\rangle. \]
We argue by induction on $k$. If $k=1$, by definition $\Phi^- V = V_{U_n,\psi_R} = V_{U_{n,1},\psi_R}$. 

Next, by induction, $(\Phi^{-})^{k}V = (V_{U_{n,k-1},\psi_R})_{U_{n-k,1}}$ where $U_{n-k,1}$ is embedded in the upper-left diagonal block, so in order to establish that $(\Phi^-)^kV\cong V_{U_{n,k},\psi_R}$, it suffices to show that \[  V(U_{n,k},\psi_R) = V(U_{n,k-1},\psi_R) \oplus V(U_{n-k,1},\psi_R). \] Since $U_{n,k-1}$ and $U_{n-k,1}$ are subgroups of $U_{n,k}$, the $\supset$ inclusion is immediate. For the reverse inclusion, observe that $U_{n,k} = U_{n,k-1} \rtimes U_{n-k,1}$ and that $U_{n-k,1}$ centralizes $\psi_R: U_{n,k-1} \to R$. So for $u \in U$ with $u=xy$ for $x \in U_{n,k-1}$, $y \in U_{n-k,1}$, and $v \in V$ we have \begin{align*}
     \psi_R(xy)v-(xy)v &= \psi_R(xy)v-\psi_R(x)yv + \psi_R(x)yv - (xy) v \\ & = \psi_R(yx)v-y\psi_R(x)v + \psi_R(x)yv - (xy) v \in V(U_{n,k-1}\psi_R) \oplus V(U_{n-k,1}\psi_R), 
\end{align*}
which provides the reverse inclusion. 

The last statement follows from the definition of derivatives, since $N_n = U_{n,n-1}$ and $\Psi^-:\Rep(P_1) \to \Rep(G_0)$ is the identity.
\end{proof}

\begin{lemma}[III.1.8 \cite{vig_book}]\label{deriv_parabolic_restriction}
The $k$-th derivative functor $\pi \mapsto \pi^{(k)}$ is the composite of parabolic restriction $J^{G_n}_{P_{n-k,k}}$ from $R[G_n]$-modules to $R[G_{n-k} \times G_k]$-modules with the top derivative from $R[G_k]$-modules to $R$-modules.
\end{lemma}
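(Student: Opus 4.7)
The plan is to realize both sides as explicit coinvariant quotients of $V$ by a common subgroup of $G_n$, against a common character, and then identify them directly.

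First, I would unfold $V^{(k)} = \Psi^-(\Phi^-)^{k-1}V$ using \autoref{top derivative explicit}. That lemma gives $(\Phi^-)^{k-1}V \cong V_{U_{n,k-1},\psi_R}$, and then $\Psi^-$ further takes trivial coinvariants along the unipotent radical $U_{n-k+1}$ of $P_{n-k+1}$, which sits in $G_n$ via the upper-left $(n-k+1)\times(n-k+1)$ block. This yields
\[ V^{(k)} \;\cong\; \left(V_{U_{n,k-1},\psi_R}\right)_{U_{n-k+1},\mathbf{1}}. \]
On the other side, by definition $J^{G_n}_{P_{n-k,k}}V = V_{U_{(n-k,k)},\mathbf{1}}$ where $U_{(n-k,k)} = \{\begin{smallmatrix}I_{n-k}&z\\0&I_k\end{smallmatrix}\}$ is the unipotent radical of $P_{n-k,k}$. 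Invoking \autoref{top derivative explicit} again for $G_k$-modules (where the top derivative is coinvariants by $N_k$ against $\psi_R$) gives
\[ \left(J^{G_n}_{P_{n-k,k}}V\right)^{(k)} \;\cong\; \left(V_{U_{(n-k,k)},\mathbf{1}}\right)_{N_k,\psi_R}, \]
with $N_k$ embedded in the lower-right block.

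The core of the proof is then a group-theoretic check: the two products
\[ U_{n,k-1}\cdot U_{n-k+1} \quad\text{and}\quad U_{(n-k,k)}\cdot N_k \]
both equal $U_{n,k}\subset G_n$ (this is immediate for the second product, and for the first it amounts to writing a general element of $U_{n,k}$ in two-step block form, analogous to the semidirect decomposition already used in the proof of \autoref{top derivative explicit}), and the two composite characters coincide as characters on $U_{n,k}$. Concretely, both descriptions give the character $\chi$ on $U_{n,k}$ that sums the superdiagonal entries coming from the lower-right $N_k$ block and vanishes on the entry at position $(n-k,n-k+1)$ as well as on the other entries of the top-right $U_{(n-k,k)}$-block. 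Because in each case the second factor normalizes the first and preserves its character, iterated coinvariants compute as coinvariants by the product, so both sides are canonically isomorphic to $V_{U_{n,k},\chi}$.

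Finally, since every identification above is natural in $V$ and equivariant for the residual $G_{n-k}$-action (given by the upper-left block, which normalizes $U_{n,k}$ and preserves $\chi$), the isomorphism is one of functors $\Rep_R(G_n)\to \Rep_R(G_{n-k})$, as desired. The main obstacle is purely bookkeeping: tracking the matrix entries contributing to the character under both decompositions, in particular verifying that the ``extra'' superdiagonal entry at $(n-k,n-k+1)$ does not show up in $\chi$ because it is killed by the trivial coinvariant step (by $U_{n-k+1}$ on one side and by $U_{(n-k,k)}$ on the other). Once this bookkeeping is done, the agreement of the two constructions is formal.
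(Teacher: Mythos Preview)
The paper does not give its own proof of this lemma; it is simply cited from Vign\'eras (\cite[III.1.8]{vig_book}). Your argument is correct and is the natural way to prove the statement: unwind both sides as iterated coinvariants, identify each with $V_{U_{n,k},\chi}$ for the character $\chi$ that is $\psi_R$ on the superdiagonal of the lower-right $N_k$ block and trivial on the $(n-k,n-k+1)$ entry, and observe that the residual $G_{n-k}$-action matches. One small point worth making explicit: when you pass from the iterated coinvariants to a single coinvariant by $U_{n,k}$, you are using not only that the second factor normalizes the first and preserves its character, but also that the two partial characters glue to a genuine group homomorphism $\chi:U_{n,k}\to R^\times$ (which you do verify, since $\chi$ factors through the $N_k$-superdiagonal and is visibly multiplicative). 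With that in hand, $V(U_{n,k},\chi) = V(H_1,\chi_1)+V(H_2,\chi_2)$ for either decomposition $U_{n,k}=H_1H_2$, and the identification is immediate.
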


To emphasize the dependence on $\psi_R$ let us write $\pi^{(k,\psi_R)} = \pi^{(k)}$. In this notation, we have:
\begin{corollary}\label{derivative_dual}
Let $\pi$ be an $R[G_n]$-module, and let $1\leq k\leq n$. We have $(\pi^{(k,\psi_R)})^{\vee} \cong (\pi^{\vee})^{(k,\psi_R^{-1})}$.
\end{corollary}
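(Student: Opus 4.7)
The plan is to chain together \autoref{deriv_parabolic_restriction} and \autoref{top_deriv_dual}, using the fact that derivatives are built out of unipotent coinvariants for subgroups whose orders are powers of $p$, hence invertible in $R$ by our standing assumption.

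First, I would use \autoref{deriv_parabolic_restriction} to decompose the $k$-th derivative with respect to $\psi_R$ as the composition
\[ \pi^{(k,\psi_R)} \cong T_k^{\psi_R}\bigl(J^{G_n}_{P_{n-k,k}}(\pi)\bigr), \]
where $J^{G_n}_{P_{n-k,k}}(\pi) = \pi_{U_{n-k,k}}$ denotes Jacquet restriction (coinvariants under the unipotent radical $U_{n-k,k}$ with trivial character) and $T_k^{\psi_R}$ denotes the top derivative on $R[G_k]$-modules, i.e.\ $(N_k,\psi_R)$-coinvariants. Both constructions take coinvariants by a $p$-group together with a character, so \autoref{top_deriv_dual} applies in each case.

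Next, I would dualize in two steps. For the outer functor, apply \autoref{top_deriv_dual} with $G = G_k$, $N = N_k$, character $\psi_R$, to the $R[G_k]$-module $J^{G_n}_{P_{n-k,k}}(\pi)$ (viewed through the second factor of the Levi); this gives
\[ \bigl(T_k^{\psi_R} J^{G_n}_{P_{n-k,k}}(\pi)\bigr)^{\vee} \;\cong\; T_k^{\psi_R^{-1}}\bigl(J^{G_n}_{P_{n-k,k}}(\pi)^{\vee}\bigr). \]
For the inner functor, apply \autoref{top_deriv_dual} with $N = U_{n-k,k}$ and trivial character (valid since $|U_{n-k,k}|$ is a power of $p$), yielding
\[ J^{G_n}_{P_{n-k,k}}(\pi)^{\vee} \;\cong\; J^{G_n}_{P_{n-k,k}}(\pi^{\vee}). \]
Combining these and invoking \autoref{deriv_parabolic_restriction} a second time (now applied to $\pi^{\vee}$ with character $\psi_R^{-1}$) produces the desired isomorphism $(\pi^{(k,\psi_R)})^{\vee} \cong (\pi^{\vee})^{(k,\psi_R^{-1})}$.

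The only nontrivial point, and the step I would spell out carefully, is that each identification is equivariant for the residual group action of $G_{n-k}$: the Levi $G_{n-k}\times G_k$ normalizes both $U_{n-k,k}$ and $N_k$ (and the restriction of $\psi_R$ to $N_k$), and under the natural $R$-bilinear pairing $\pi\otimes\pi^{\vee}\to R$ the projector $e_{N,\psi}$ defining $(N,\psi)$-coinvariants is adjoint to the projector $e_{N,\psi^{-1}}$ defining $(N,\psi^{-1})$-coinvariants on the dual side. This adjointness is precisely what underlies the chain of identifications in the proof of \autoref{top_deriv_dual}, and it automatically respects the action of any subgroup commuting with the projector, so both isomorphisms above are $G_{n-k}$-equivariant. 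No further computation is needed.
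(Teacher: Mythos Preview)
Your argument is correct and follows the same overall architecture as the paper: decompose the $k$-th derivative as top derivative composed with Jacquet restriction (\autoref{deriv_parabolic_restriction}), then show each piece commutes with duals. The one point of divergence is how you handle the Jacquet functor. You apply \autoref{top_deriv_dual} directly to the unipotent radical $U_{n-k,k}$ with the trivial character, which is legitimate since $|U_{n-k,k}|$ is a $p$-power; the paper instead argues categorically, noting that parabolic restriction is both left and right adjoint to parabolic induction and that parabolic induction commutes with duals (\cite[I.5.11]{vig_book}), whence parabolic restriction does too. Your route is more elementary and entirely self-contained within the lemmas already proved, at the cost of the explicit equivariance check you flag at the end; the paper's adjunction argument yields a natural isomorphism automatically, so the Levi-equivariance comes for free but requires citing an outside fact. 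Either way the proof is short and the difference is cosmetic.
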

\begin{proof}
When $k=n$ this follows from \autoref{top_deriv_dual} and \autoref{top derivative explicit}. When $k<n$, \autoref{top_deriv_dual} combined with \autoref{deriv_parabolic_restriction} shows that it suffices to prove the parabolic restriction functor commutes with duals. However since parabolic restriction is both left and right adjoint to parabolic induction, and parabolic induction commutes with duals (\cite[\S I.5.11]{vig_book}), it follows that parabolic restriction commutes with duals.
\end{proof}

The following is a characterization of restrictions of irreducible cuspidals in terms of Bernstein--Zelevinsky derivatives.

\begin{theorem}[III.1.5 \cite{vig_book}] \label{CuspidalCriterionVigneras}
Let $k$ be a $\mathbb{Z}[\frac{1}{p},\zeta_p]$-algebra which is a field. An irreducible $k$-representation $V$ of $G_n$ is cuspidal if and only if $V^{(n)}$ is one-dimensional and $V^{(i)}=0$ for $i=1,\dots,n-1$. 
\end{theorem}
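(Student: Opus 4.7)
The plan is to combine the Bernstein--Zelevinsky filtration \eqref{bzfiltration} of $V|_{P_n}$ with the interpretation of derivatives via Jacquet modules (\autoref{deriv_parabolic_restriction}), plus uniqueness of Whittaker models.

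For the forward direction, assume $V$ is irreducible cuspidal. Cuspidality gives $J^{G_n}_{P_{n-k,k}}(V)=0$ for all $1\leq k\leq n-1$, hence $V^{(k)}=0$ by \autoref{deriv_parabolic_restriction}. The BZ filtration \eqref{bzfiltration} then collapses to a single piece $V|_{P_n}\cong (\Phi^+)^{n-1}\Psi^+(V^{(n)})\cong V^{(n)}\otimes_k\ind_{N_n}^{P_n}\psi_R$, which is nonzero since $V\neq 0$, so $V^{(n)}\neq 0$. For the bound $\dim_k V^{(n)}\leq 1$, combine \autoref{derivative_dual} with Frobenius reciprocity:
\[ \dim_k V^{(n)} \;=\; \dim_k \Hom_{G_n}\bigl(\ind_{N_n}^{G_n}\psi_R,\, V^\vee\bigr) \;\leq\; 1, \]
the inequality being multiplicity-freeness of the Gelfand--Graev $G_n$-representation (Gelfand--Kazhdan), valid over any algebraically closed field $k$ with $\Char(k)\neq p$.

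For the converse, suppose $V^{(n)}$ is one-dimensional and $V^{(k)}=0$ for $1\leq k\leq n-1$. The BZ filtration again forces $V|_{P_n}\cong \ind_{N_n}^{P_n}\psi_R$. The case $k=1$ of cuspidality is immediate: since $G_1$ has trivial unipotent subgroup $N_1$, \autoref{deriv_parabolic_restriction} specializes to identify $V^{(1)}$ with $J^{G_n}_{P_{n-1,1}}(V)$ as a $G_{n-1}$-module (forgetting the $G_1$-action), so $V^{(1)}=0$ forces $J^{G_n}_{P_{n-1,1}}(V)=0$. For $k\geq 2$ one argues by induction on $n$: assume for contradiction $J^{G_n}_{P_{n-k,k}}(V)\neq 0$, so $V$ embeds in some $\ind^{G_n}_{P_{n-k,k}}(\sigma\boxtimes\rho)$ with $\sigma,\rho$ irreducible. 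The geometric lemma computes the $j$-th derivative of this parabolic induction as filtered by products $\sigma^{(a)}\boxtimes\rho^{(b)}$ with $a+b=j$. Applying the inductive hypothesis to $\sigma$ and $\rho$ and tracking the $P_n$-structure of the embedding, one shows that compatibility with $V|_{P_n}\cong \ind_{N_n}^{P_n}\psi_R$ forces some $V^{(j)}\neq 0$ with $1\leq j\leq n-1$, contradicting the hypothesis.

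The main obstacle is the case $k\geq 2$ in the converse: the vanishing $V^{(k)}=0$ does not on its own force $J^{G_n}_{P_{n-k,k}}(V)=0$, as it only forces the $G_k$-top-derivative of the Jacquet module to vanish. One must use the \emph{full} collection of vanishing hypotheses together with the inductive/geometric-lemma analysis described above. A secondary difficulty is extending the Gelfand--Kazhdan multiplicity-one statement to modular characteristic, which is classical but requires care.
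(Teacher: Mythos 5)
The paper does not prove this statement; it cites it verbatim from Vign\'eras \cite[III.1.5]{vig_book}, so there is no ``paper's own proof'' to compare against. Your attempt is therefore a reconstruction, and I assess it on its own terms.

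Your forward direction is essentially correct and standard. Cuspidality kills $V^{(k)}$ for $1\le k\le n-1$ via \autoref{deriv_parabolic_restriction}; the Bernstein--Zelevinsky filtration \eqref{bzfiltration} then identifies $V|_{P_n}\cong \Psi^+(\Phi^+)^{n-1}(V^{(n)})\cong \ind_{N_n}^{P_n}\psi_k\otimes_k V^{(n)}$, which is nonzero, so $V^{(n)}\ne 0$; and the bound $\dim_k V^{(n)}\le 1$ follows from \autoref{derivative_dual}, Frobenius reciprocity, and multiplicity-freeness of $\ind_{N_n}^{G_n}\psi_k$, which the paper itself records (via \cite[Corollary~1.7]{zhang} in \autoref{subsec: projective envelopes}). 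Your worry about ``extending Gelfand--Kazhdan to modular characteristic'' is addressed by that citation and by the version recorded as \autoref{WhittakerUnique}, so it is not a genuine obstacle.

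The real gap is exactly where you flag it: the converse for $k\ge 2$. You correctly observe that $V^{(k)}=0$ does \emph{not} directly give $J^{G_n}_{P_{n-k,k}}(V)=0$ (only the top $G_k$-derivative of that Jacquet module is forced to vanish), and that one cannot get away with the $k=1$ case alone --- e.g.\ an irreducible $\sigma\times\tau$ with $\sigma,\tau$ cuspidal on $G_2\times G_2$ has $J_{P_{3,1}}=0$ but is not cuspidal. However, your sketch via ``embed in a parabolic induction, apply the geometric Leibniz rule, track the $P_n$-structure, argue by induction'' is not actually carried out, and as written it does not close. Here is a clean way to finish. Suppose $V$ is irreducible and not cuspidal. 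Among all maximal parabolics $P_{a,b}$ with $J^{G_n}_{P_{a,b}}(V)\neq 0$, choose one with $b$ minimal. Take any irreducible quotient $\sigma\boxtimes\tau$ of $J^{G_n}_{P_{a,b}}(V)$. If $\tau$ were not cuspidal, say $J^{G_b}_{P_{b-c,c}}(\tau)\ne 0$ for some $0<c<b$, then transitivity of Jacquet functors forces $J^{G_n}_{P_{a+b-c,c}}(V)\ne 0$, contradicting minimality of $b$. So $\tau$ is cuspidal, and (by the already-established forward implication for $G_b$) the top derivative $\tau^{(b)}$ is nonzero. Since the top-derivative functor on $\Rep_k(G_b)$ is exact and $J^{G_n}_{P_{a,b}}(V)\twoheadrightarrow\sigma\boxtimes\tau$ is a surjection, the composite $J^{G_n}_{P_{a,b}}(V)\to\sigma\boxtimes\tau^{(b)}$ is nonzero, i.e.\ $V^{(b)}\ne 0$ with $1\le b\le n-1$. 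This closes the converse without any appeal to the geometric lemma or to a classification of cuspidal supports, and is probably closer to Vign\'eras's own argument than the route you sketch.
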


Finally, we state some basic facts about how the spaces of bilinear forms interact with some of the Bernstein--Zelevinsky functors .

\begin{proposition}\label{bil spaces and bz functors} As usual, let $\one:G_{n+1}\to R^{\times}$ be the trivial character.
\begin{align} \label{eq BZ bil cancel Psi}
\bil_{P_{n+1}}(\Psi^+(V),\Psi^+(V'), \one)&\cong \bil_{G_n}(V,V',\one)\\ \label{eq BZ bil cancel Phi}
\bil_{P_{n+1}}(\Phi^+(V),\Phi^+(V'),\one)&\cong \bil_{P_n}(V,V',\one)\\
\label{eq BZ bil is zero}
\bil_{P_{n+1}}(\Psi^+(V),\Phi^+(V'),\one) &=0
\end{align}
In each statement above, $V$ and $V'$ are arbitrary representations living in the appropriate category.
\end{proposition}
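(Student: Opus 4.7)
The plan is to reduce all three assertions to computing $(U_{n+1},1)$-coinvariants of the relevant tensor products. The key observation is that every character $\chi \colon P_{n+1} \to R^{\times}$ is trivial on $U_{n+1}$: the transitive $G_n$-action on $\F_q^n \setminus \{0\}$ forces $[G_n, U_{n+1}] = U_{n+1}$, so $\chi$ factors through $P_{n+1}^{\mathrm{ab}} \cong \F_q^{\times}$. Hence any $B \in \bil_{P_{n+1}}(X, Y, \chi)$ factors through the $(U_{n+1},1)$-coinvariants of $X \otimes_R Y$, and since $|U_{n+1}| = q^n$ is invertible in $R$, these coincide with the $(U_{n+1},1)$-invariants via the projector \eqref{coinvariants_projector}.

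With this reduction in hand, cases \eqref{eq BZ bil cancel Psi} and \eqref{eq BZ bil is zero} are immediate. In \eqref{eq BZ bil cancel Psi}, $U_{n+1}$ acts trivially on $\Psi^+(V) \otimes \Psi^+(V')$, identifying it with the inflation to $P_{n+1}$ of the $G_n$-representation $V \otimes_R V'$, whence $\bil_{P_{n+1}}(\Psi^+(V), \Psi^+(V'), \chi) \cong \bil_{G_n}(V, V', \chi)$. In \eqref{eq BZ bil is zero}, the $(U_{n+1},1)$-invariants of $\Psi^+(V) \otimes \Phi^+(V')$ are $V \otimes_R \Psi^{-}\Phi^{+}(V')$, which vanishes by property~(5) of Section~\ref{ss properties of BZ functors}.

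The substantive case is \eqref{eq BZ bil cancel Phi}. I would apply Mackey's theorem to $\Phi^+(V) = \ind_{P_n U_{n+1}}^{P_{n+1}}(V \otimes \psi_R)$ restricted to $U_{n+1}$. Using the normality of $U_{n+1}$ in $P_{n+1}$, the relevant double coset space $U_{n+1} \backslash P_{n+1} / P_n U_{n+1}$ identifies with $G_n/P_n$, and Mackey gives
\[
\Phi^+(V)|_{U_{n+1}} \;\cong\; \bigoplus_{\psi' \in \widehat{U_{n+1}} \setminus \{1\}} V \otimes_R \psi',
\]
where the index set is the $G_n$-orbit of $\psi_R$ under conjugation, i.e., all nontrivial characters of $U_{n+1}$, with stabilizer $P_n$ at $\psi_R$. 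Tensoring with the analogous decomposition of $\Phi^+(V')|_{U_{n+1}}$ and extracting the $U_{n+1}$-invariants, which correspond to pairs $(\psi_1, \psi_2)$ with $\psi_2 = \psi_1^{-1}$, yields a direct sum of copies of $V \otimes_R V'$ indexed by $\widehat{U_{n+1}} \setminus \{1\}$. Tracking the $G_n$-action identifies this space with $\ind_{P_n}^{G_n}(V \otimes_R V')$, where the $P_n$-structure on the distinguished $(\psi_R, \psi_R^{-1})$-summand is the natural diagonal one. Frobenius reciprocity then yields
\[
\bil_{P_{n+1}}(\Phi^+(V), \Phi^+(V'), \chi) \;\cong\; \Hom_{P_n}(V \otimes V', \chi) \;=\; \bil_{P_n}(V, V', \chi).
\]
The main obstacle will be the careful bookkeeping required to confirm that the natural $P_n$-action on the distinguished summand matches the diagonal action on $V \otimes_R V'$, though this ultimately amounts to unwinding the Mackey isomorphism and the definition of $\Phi^+$.
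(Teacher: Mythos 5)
Your strategy is sound and, unlike the paper, actually gives a self-contained argument: the paper disposes of this proposition in one line by citing \cite[Section 3.6]{bz77} together with the adjunctions of \autoref{ss properties of BZ functors}. Your reductions of \eqref{eq BZ bil cancel Psi} and \eqref{eq BZ bil is zero} to $(U_{n+1},\mathbf{1})$-coinvariants are correct, with one small caveat: the claim that every character of $P_{n+1}$ kills $U_{n+1}$ fails in the degenerate case $n=1$, $q=2$, where $P_2\cong U_2\cong\Z/2\Z$ and $\psi$ itself is a counterexample. The proposition should simply be read with $\chi$ trivial on $U_{n+1}$ (in the paper's applications $\chi=\mathbf{1}$ throughout), after which \eqref{eq BZ bil cancel Psi} is immediate and \eqref{eq BZ bil is zero} follows from $(\Phi^+V')_{U_{n+1},\mathbf{1}}=\Psi^-\Phi^+(V')=0$ exactly as you say. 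Your Mackey computation for \eqref{eq BZ bil cancel Phi} --- $\Phi^+(V)|_{U_{n+1}}\cong\bigoplus_{\psi'\neq 1}V\otimes\psi'$ with $G_n$ permuting the nontrivial characters transitively with stabilizer $P_n$, hence $(\Phi^+V\otimes\Phi^+V')_{U_{n+1},\mathbf{1}}\cong\ind_{P_n}^{G_n}(V\otimes_RV')$, followed by Frobenius reciprocity --- is the right argument and is more informative than the citation.

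The bookkeeping you defer at the end is, however, exactly where a genuine convention issue hides, so it deserves to be made explicit. If both inductions are literally $\Phi^+=\Phi^+_{\psi_R}$, then the $\psi_R^{-1}$-isotypic component of $\Phi^+(V')|_{U_{n+1}}$ is not $V'$ with its given $P_n$-action but the twist ${}^hV'$ by conjugation by $h=\diag(1,\dots,1,-1)$ (the element carrying $\psi_R$ to $\psi_R^{-1}$), and $\Ad(h)$ is an \emph{outer} automorphism of $P_n$; the literal conclusion would then be $\bil_{P_n}(V,{}^hV',\chi)$ rather than $\bil_{P_n}(V,V',\chi)$. The untwisted form --- which is what your ``distinguished $(\psi_R,\psi_R^{-1})$-summand'' phrasing is implicitly using --- is correct precisely because in every application of this proposition the second argument is a derivative of $\calw(V',\psi_R^{-1})$ and the functor in the second slot is $\Phi^+_{\psi_R^{-1}}$; then the $\psi_R^{-1}$-isotypic component of $\Phi^+_{\psi_R^{-1}}(V')|_{U_{n+1}}$ is the untwisted $V'$, the distinguished summand is $V\otimes_RV'$ with the diagonal $P_n$-action, and your Frobenius reciprocity step closes the argument. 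So: state the convention that the two $\Phi^+$'s are taken with respect to $\psi_R$ and $\psi_R^{-1}$ respectively (matching the paper's use of $\calw(V,\psi_R)$ and $\calw(V',\psi_R^{-1})$), and your proof is complete; with the same character in both slots the statement acquires an outer twist on one factor.
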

\begin{proof}
This follows from \cite[Section 3.6]{bz77} and the adjunctions in \autoref{ss properties of BZ functors}.
\end{proof}

\begin{lemma} \label{lemma Gm to Pm}Let $V \in \Rep_R(P_n)$ and $V' \in \Rep_R(G_n)$. Then
    \[ \bil_{G_n}(\Phi^+V,V',\one)\cong\bil_{P_n}(V,V',\one). \]
\end{lemma}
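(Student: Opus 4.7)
The plan is to identify the restriction of $\Phi^+ V$ from $P_{n+1}$ to the Levi subgroup $G_n \subset P_{n+1}$, and then apply Frobenius reciprocity.

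First, since by definition $\Phi^+ V = \ind_{P_n U_{n+1}}^{P_{n+1}}(V \otimes \psi_R)$ and $P_{n+1} = U_{n+1} \rtimes G_n$, I would apply Mackey's decomposition to compute the restriction $\Phi^+ V|_{G_n}$. The relevant observations are that $G_n \cdot (P_n U_{n+1}) = P_{n+1}$ is a single $(G_n, P_n U_{n+1})$-double coset (since $P_{n+1} = G_n \cdot U_{n+1}$ and $U_{n+1} \subset P_n U_{n+1}$), and $G_n \cap P_n U_{n+1} = P_n$ (since $G_n \cap U_{n+1} = \{1\}$), so that the character $\psi_R$ restricts trivially on the intersection. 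Mackey then yields the identification $\Phi^+ V|_{G_n} \cong \ind_{P_n}^{G_n} V$.

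Second, combining this identification with the tensor-hom adjunction and Frobenius reciprocity for $P_n \leq G_n$ gives the chain of natural isomorphisms
\[
\bil_{G_n}(\Phi^+ V, V', \textbf{1}) \cong \Hom_{R[G_n]}(\ind_{P_n}^{G_n} V, (V')^\vee) \cong \Hom_{R[P_n]}(V, (V'|_{P_n})^\vee) \cong \bil_{P_n}(V, V', \textbf{1}).
\]

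The main obstacle is the Mackey identification in the first step, though given the semidirect product structure $P_{n+1} = U_{n+1} \rtimes G_n$ and the triviality of $\psi_R$ on $G_n \cap P_n U_{n+1} = P_n$, this reduces to an essentially formal computation. The second step is then a standard application of adjunctions, and it requires no further hypotheses on the index $[G_n : P_n]$ since induction and coinduction coincide for finite groups, as noted in \autoref{section_representations}.
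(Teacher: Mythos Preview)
Your proof is correct. The paper does not give its own argument but simply cites \cite[Lemma 3.5]{matringe-moss} and \cite[Lemma 3.8]{kurinczuk-matringe}; your Mackey identification $(\Phi^+V)|_{G_n}\cong \ind_{P_n}^{G_n} V$ followed by Frobenius reciprocity is the standard route and is essentially what those references do (transposed from the $p$-adic setting, where in the finite-group case the coincidence of induction and coinduction makes the adjunction step unconditional, as you note).
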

\begin{proof}
The proof is the same as in \cite[Lemma 3.5]{matringe-moss} or \cite[Lemma 3.8]{kurinczuk-matringe}.
\end{proof} 

\subsection{Whittaker models} \label{whittakermodels_subsection}
Recall that we fixed a nontrivial character $\psi: \F_q \to \Z[\frac{1}{p},\zeta_p]^\times$ and its extension $\psi_R: \F_q \to R^\times$ in \autoref{ss derivative functors}. The \emph{Whittaker space} for $G_n$, or \emph{Gelfand--Graev representation} of $G_n$, is
    \[ \calw(\psi_R) := \Ind_{N_n}^{G_n} \psi_R \]
where $\psi_R$ is viewed as a character of $N_n$ via the map
\begin{align*}
     N_n \to N_n/[N_n,N_n] \xrightarrow{\sim} & (\F_q)^{\oplus n-1} \to \F_q \\
    & (y_1, \ldots, y_{n-1}) \mapsto y_1 + \cdots +y_{n-1}.
\end{align*}
Since we defined $\psi$ over the base ring,  $\calw(\psi_R)$ does not depend on the choice of $\psi$. See \cite[Remark 2.2]{matringe-moss} for a discussion of this.

\begin{definition}\label{WhittakerType}
We say that $(\rho,V) \in \Rep_R(G_n)$ is of \textit{$\psi$-Whittaker type} (or just Whittaker type) if the $n$-th derivative $V^{(n)}$ is a free $R$-module of rank $1$.
\end{definition}
\begin{remark}
We will sometimes call an irreducible representation of $\psi$-Whittaker type \emph{$\psi$-generic} or \emph{generic}. Without the irreducibility assumption, there is a distinction between Whittaker type and generic, as described in the next definition.
\end{remark}

By Frobenius reciprocity and \autoref{top derivative explicit} there is an isomorphism
    \[ \Hom_R(V^{(n)}, R) \xrightarrow{\sim} \Hom_{R[G_n]}(V, \calw(\psi_R)). \]

\begin{definition}
Suppose $(\rho,V)$ is of $\psi$-Whittaker type. Then the choice of a generator of $\Hom_R(V^{(n)},R)$ gives a map $V \to \calw(\psi_R)$.
\begin{enumerate}
    \item The image of $V \to \calw(\psi_R)$ is denoted $\calw(V, \psi_R)$ and is called the \textit{$\psi$-Whittaker model} (or just \textit{Whittaker model}) of $V$. Note the image does not depend on the choice of generator.
    \item We say that $V$ is \textit{essentially $\psi$-generic} if the map $V \to \calw(\psi_R)$ is injective. In this case $V$ and $\calw(V, \psi_R)$ are isomorphic as $R[G_n]$-modules.
\end{enumerate} 
\end{definition}

\begin{example}
For an example of a representation that is $\psi$-Whittaker type but not essentially $\psi$-generic, let $R$ be a field of characteristic $\ell$, let $V_1$ be an irreducible generic representation of $G_n$, let $V_2$ be any non-generic representation of $G_n$ (e.g. the trivial representation for $n \geq 2$), and take any extension $V$ of $V_2$ by $V_1$ (e.g. $V_1\oplus V_2$). By exactness of the derivative functor, $V^{(n)} = V_1^{(n)}\oplus V_2^{(n)} = V_1^{(n)} \cong R$, so $V$ is of $\psi$-Whittaker type. However, the map $V\to \calw(\psi_R)$ contains the subrepresentation $V_2$ in its kernel, so $V$ is not essentially $\psi$-generic.
\end{example}

\begin{lemma}\label{whitt_model_base_change}
Let $R\to R'$ be a homomorphism of rings. If $(\rho,V)$ is of $\psi$-Whittaker type, so is $(\rho\otimes_RR', V\otimes_RR')$ and 
$$\mathcal{W}(V\otimes_RR',\psi_{R'}) = \mathcal{W}(V,\psi_R)\otimes_RR'.$$
\end{lemma}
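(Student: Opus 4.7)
The plan divides into two parts. For the first claim --- that $V \otimes_R R'$ is of $\psi_{R'}$-Whittaker type --- the strategy is to invoke property (7) from \autoref{ss properties of BZ functors}: since the derivative functors commute with base change, we get $(V \otimes_R R')^{(n)} \cong V^{(n)} \otimes_R R'$, which is free of rank one over $R'$ because $V^{(n)}$ is so over $R$.

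For the identification of Whittaker models, the plan is to fix a generator $\lambda : V^{(n)} \xrightarrow{\sim} R$ of $\Hom_R(V^{(n)}, R)$, which by Frobenius reciprocity determines the Whittaker map $\phi_\lambda : V \to \mathcal{W}(\psi_R)$ with explicit formula $\phi_\lambda(v)(g) = \lambda(\overline{\pi(g)v})$, the overline denoting projection to $V^{(n)}$, and whose image is $\mathcal{W}(V, \psi_R)$. Base changing, $\lambda \otimes 1_{R'}$ is a generator of $\Hom_{R'}((V \otimes_R R')^{(n)}, R')$. Since $R[G_n]$ is free as an $R[N_n]$-module, induction from $N_n$ to $G_n$ commutes with base change, giving a canonical isomorphism $\mathcal{W}(\psi_R) \otimes_R R' \cong \mathcal{W}(\psi_{R'})$. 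Direct inspection of the defining formula then shows that the Whittaker map $\phi_{\lambda \otimes 1}$ for $V \otimes_R R'$ is precisely $\phi_\lambda \otimes_R \id_{R'}$.

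To conclude, consider the factorization $\phi_\lambda : V \twoheadrightarrow \mathcal{W}(V, \psi_R) \hookrightarrow \mathcal{W}(\psi_R)$. Tensoring with $R'$ and using right-exactness yields a canonical surjection $\mathcal{W}(V, \psi_R) \otimes_R R' \twoheadrightarrow \mathcal{W}(V \otimes_R R', \psi_{R'})$. Upgrading this to an isomorphism is the main technical step: the key leverage is that the short exact sequence $0 \to V(N_n, \psi_R) \to V \to V^{(n)} \to 0$ splits as $R$-modules (because $V^{(n)}$ is free of rank one), which provides sufficient control on the kernel of $\phi_\lambda$ under base change to identify the image of $V \otimes_R R'$ in $\mathcal{W}(\psi_{R'})$ with $\mathcal{W}(V, \psi_R) \otimes_R R'$ on the nose. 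The principal obstacle is precisely this final identification, since in general the formation of images does not commute with base change when $R \to R'$ is not flat; the Whittaker-type hypothesis is exactly what rescues the argument.
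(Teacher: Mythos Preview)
Your approach is essentially the same as the paper's: invoke base change for the top derivative to get Whittaker type, then use the explicit formula $W_v(g) = \lambda(gv)$ to check that $W_{v \otimes 1} = W_v \otimes 1$ under the identification $\mathcal{W}(\psi_{R'}) = \mathcal{W}(\psi_R) \otimes_R R'$. The paper derives the base-change of coinvariants directly from the projector in \autoref{coinvariants_projector}, while you cite property (7); these are equivalent.

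Where you diverge is in your final paragraph. The paper reads the conclusion as an equality of submodules of $\mathcal{W}(\psi_{R'})$: the Whittaker model of $V \otimes_R R'$ coincides with the \emph{image} of $\mathcal{W}(V,\psi_R) \otimes_R R'$ under the canonical map into $\mathcal{W}(\psi_{R'})$. This is exactly what the computation $W_{v \otimes 1} = W_v \otimes 1$ delivers, and it is all that is used downstream (to transport the functional equation along $R \to R'$). You instead aim for the stronger claim that the abstract module $\mathcal{W}(V,\psi_R) \otimes_R R'$ is isomorphic to $\mathcal{W}(V \otimes_R R', \psi_{R'})$, i.e.\ that the surjection has no kernel. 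Your proposed leverage---the $R$-splitting of $0 \to V(N_n,\psi_R) \to V \to V^{(n)} \to 0$---does not obviously do the job: the kernel of $\phi_\lambda$ is the largest $G_n$-submodule of $V$ contained in $V(N_n,\psi_R)$, not $V(N_n,\psi_R)$ itself, so a splitting of the coinvariants sequence gives no direct control over $\ker(\phi_\lambda)$ or over $\mathrm{Tor}_1^R(\mathcal{W}(\psi_R)/\mathcal{W}(V,\psi_R), R')$. This extra step is both unnecessary for the paper's purposes and not adequately justified by your sketch.
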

\begin{proof}
Since $\verts{N}$ is invertible in $R$ and $R'$, it follows from the existence of the projector in \autoref{coinvariants_projector} that $(V\otimes_RR')^{N,\psi_{R'}} = (V^{N,\psi_R})\otimes_RR'$ and hence also $(V\otimes_RR')_{N,\psi_{R'}} = (V_{N,\psi_R})\otimes_RR'$. This proves that $V\otimes_RR'$ is also of Whittaker type. Next, if $\lambda$ is a generator of the rank-one $R$-module $(V_{N,\psi_R})^{\vee}$, the Whittaker model of $V$ is
\begin{align*}
    V&\to \mathcal{W}(V,\psi_R)\\
    v&\mapsto W_v
\end{align*}
where $W_v(g) = \lambda(gv)$. In particular, $\lambda\otimes 1$ is generator of $((V\otimes_RR')_{N,\psi_{R'}})^{\vee}$ and the Whittaker model of $V\otimes_RR'$ is given by $$W_{v\otimes 1}(g) = (\lambda\otimes 1)(gv) = \lambda(gv)\otimes 1 = W_v(g)\otimes 1.$$ In particular, $\mathcal{W}(V\otimes_RR',\psi_{R'})=\mathcal{W}(V,\psi_R)\otimes_RR'$.
\end{proof}

The following Lemma is sometimes described as the existence of so-called ``Bessel vectors.''
\begin{lemma}\label{lem:restriction_to_pn}
If $(\rho,V)\in \Rep_R(G_n)$ is of $\psi$-Whittaker type, the map
\begin{align*}
\calw(V,\psi_R)&\to \Ind_{N_n}^{P_n}\psi_R\\
W&\mapsto \restr{W}{P_n}
\end{align*}
is surjective.
\end{lemma}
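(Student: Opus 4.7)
The plan is to identify a $P_n$-submodule of $V$ that surjects onto $\Ind_{N_n}^{P_n}\psi_R$ under the restriction map. The candidate is the smallest piece $V_n$ of the Bernstein--Zelevinsky filtration \eqref{bzfiltration} of $V|_{P_n}$. Since $V_n = (\Phi^+)^{n-1}\Psi^+(V^{(n)})$ (by \eqref{eq quotients from derivatives}, with the convention $V_{n+1} = 0$), the Whittaker-type hypothesis gives $V_n \cong (\Phi^+)^{n-1}\Psi^+(R)$ as $R[P_n]$-modules. An induction on $n$ then identifies this module with $\Ind_{N_n}^{P_n}\psi_R$: unfolding $\Phi^+$ as $\Ind_{P_kU_{k+1}}^{P_{k+1}}(- \otimes \psi_R)$ and combining with $\Ind_{N_k}^{P_k}\psi_R$ via transitivity of induction yields $\Ind_{N_k U_{k+1}}^{P_{k+1}}$ of the assembled character, which equals $\Ind_{N_{k+1}}^{P_{k+1}}\psi_R$ using $N_{k+1} = N_k U_{k+1}$ and the fact that $\psi_R$ on $N_{k+1}$ restricts to $\psi_R$ on both $N_k$ and $U_{k+1}$.

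Next I would show that the composite $V_n \hookrightarrow V \to \calw(V, \psi_R) \to \Ind_{N_n}^{P_n}\psi_R$ is an isomorphism. The BZ adjunctions $\Phi^+ \dashv \Phi^-$ and $\Psi^- \dashv \Psi^+$ together with $\Phi^-\Phi^+ \cong \id$ and $\Psi^-\Psi^+ \cong \id$ yield $\End_{R[P_n]}((\Phi^+)^{n-1}\Psi^+(R)) \cong R$, so this composite is multiplication by some $c \in R$. To see $c$ is a unit, apply the top-derivative functor: the inclusion $V_n \hookrightarrow V$ induces an isomorphism $V_n^{(n)} \xrightarrow{\sim} V^{(n)}$ by exactness and the vanishing $(V_k/V_{k+1})^{(n)} = \Psi^-(\Phi^-)^{n-k}\Psi^+ V^{(k)} = 0$ for $k < n$ (using $\Phi^-\Psi^+ = 0$), while the Frobenius reciprocity construction of the restriction map ensures $V \to \Ind_{N_n}^{P_n}\psi_R$ also induces an isomorphism on top derivatives (since evaluation at $1$ on the target factors through the top-derivative quotient and recovers the chosen Whittaker functional). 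Hence $c$ is a unit, $V_n$ maps isomorphically onto $\Ind_{N_n}^{P_n}\psi_R$, and the restriction map is surjective.

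The main obstacle is the inductive identification $(\Phi^+)^{n-1}\Psi^+(R) \cong \Ind_{N_n}^{P_n}\psi_R$: the delicate point is verifying at each step that the characters $\psi_R$ on the successive $U_{k+1}$'s assemble into the character $\psi_R$ on $N_n$, which is a bookkeeping exercise but must be carried out carefully. The rest of the argument is formal, following from the properties of the Bernstein--Zelevinsky functors collected in \autoref{ss properties of BZ functors}.
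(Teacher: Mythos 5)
Your argument is correct, and it follows the paper's overall strategy: both proofs single out the bottom step $(\Phi^+)^{n-1}\Psi^+(V^{(n)})$ of the Bernstein--Zelevinsky filtration, identify it with $\Ind_{N_n}^{P_n}\psi_R$ via the definition of $\Phi^+$ and transitivity of induction, and then show that restriction to $P_n$ carries this piece onto the target. The two proofs part ways only at the last step. The paper argues explicitly: it takes the generator $v$ of $\calw^{N_n,\psi_R}$, shows by an iterated stabilizer argument (descending through $G_{n-1}, G_{n-2}, \dots$ and using that $P_{m-1}$ is the stabilizer of $\psi_R|_{U_m}$ in $G_{m-1}$) that $v|_{P_n}$ is supported on $N_n$, and concludes that $v|_{P_n}$ equals the standard generator $f_{\bar{v}}$ of $\Ind_{N_n}^{P_n}\psi_R$; this is the explicit ``Bessel vector'' computation the paper advertises just before the lemma. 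You replace this with a formal argument: the composite is an endomorphism of $(\Phi^+)^{n-1}\Psi^+(R)$, whose endomorphism ring is $R$ by the adjunctions, hence multiplication by some $c\in R$, and $c$ is a unit because the composite induces an isomorphism on top derivatives (the lower graded pieces die under $(\Phi^-)^{n-1}$ since $\Phi^-\Psi^+=0$, and evaluation at $1$ on the target recovers the chosen Whittaker functional $\lambda$). Both routes are sound; yours is cleaner and purely functorial, while the paper's version has the side benefit of exhibiting the preimage explicitly as the Whittaker function whose restriction to $P_n$ is supported on $N_n$.
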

\begin{proof}
Denote $\calw(W,\psi_R)$ by $\calw$. We will exhibit a subspace of $\calw$ that maps isomorphically to $\Ind_{N_n}^{P_n}\psi_R$ under this map, namely it is the bottom step $(\Phi^+)^{n-1}\Psi^+(\calw^{(n)})$ (corresponding to $k=n$) of the filtration in \autoref{bzfiltration} applied to $\calw$.

By \autoref{coinvariants_projector} and \autoref{top derivative explicit}, the natural quotient map 
\begin{align*}
\calw &\to \calw^{(n)}\\
v&\mapsto \bar{v}
\end{align*}
maps $\calw^{N_n,\psi_R}$  isomorphically onto $\calw^{(n)}$. We view $\calw^{(n)}$ as the trivial representation of $G_0 = \{1\}$. The definition of $\Phi^+$ and transitivity of induction identifies $\ind_{N_n}^{P_n}\psi_R \cong (\Phi^+)^{n-1}\Psi^+(\calw^{(n)})$.

The inclusion $(\Phi^+)^{n-1}\Psi^+(\calw^{(n)}) \hookrightarrow \calw$ coming from \autoref{eq quotients from derivatives} corresponds to the aforementioned isomorphism $\calw^{(n)}\cong \calw^{N_n,\psi_R}$ under the following adjunctions: 
\begin{align*}
\Hom_R(\calw^{(n)},\calw^{N_n,\psi_R}) & \cong \Hom_{R[N_n]}(\psi_{\calw^{(n)}},\calw)\\
&\cong \Hom_{R[P_n]}(\ind_{N_n}^{P_n}\psi_{\calw^{(n)}},\calw).
\end{align*}
Let us be explicit. If $v$ is an $R$-generator of $\calw^{N_n,\psi_R}$, the function $f_{\bar{v}}$ supported on $N_n$ such that $f_{\bar{v}}(n) = \psi_R(n)\bar{v}$, $n\in N_n$, is a generator of $\Ind_{N_n}^{P_n}\psi_{\calw^{(n)}}$ (\cite[I.5.2]{vig_book}). The inclusion $\Ind_{N_n}^{P_n}\psi_{\calw^{(n)}}\hookrightarrow \calw$ sends $f_{\bar{v}}$ to $v$ (\cite[I.5.7]{vig_book}).

As $\calw$ is a subset of $\Ind_{N_n}^{G_n}\psi_R$, we will view elements of $\calw$ as functions on $G_n$. In this context, the value $w(g)$ of an element $w\in \calw$ is the element of $R$ corresponding to $\overline{gv}$ in our fixed isomorphism $\calw^{(n)}\cong R$.

Since our generator $v$ of $\calw^{N_n,\psi_R}$ satisfies $nv = \psi_R(n)v$ for $n\in N_n$, it follows that for $g\in G_{n-1}$, $\left(\begin{smallmatrix} I_{n-1} & u\\0&1\end{smallmatrix}\right)\in U_n$, we have
$$\psi_R\left(\begin{smallmatrix} I_{n-1} & u\\0&1\end{smallmatrix}\right)v\left(\begin{smallmatrix} g & 0\\0&1\end{smallmatrix}\right) = \psi_R\left(\begin{smallmatrix} I_{n-1} & g^{-1}u\\0&1\end{smallmatrix}\right)v\left(\begin{smallmatrix} g & 0\\0&1\end{smallmatrix}\right).$$ Since $P_{n-1}$ is the stabilizer of $\psi_R$ in $G_{n-1}$, it follows that the support of $v|_{G_{n-1}}$ is contained in $P_{n-1}$. But the same argument with $g\in G_{n-2}$ and $u\in U_{n-1}$ shows that~$v|_{G_{n-2}}$ is supported on $P_{n-2}$. Repeating this, we conclude that the restriction of $v$ to~$P_n$ is supported only on $N_n$. Since the values of $v$ and $f_{\bar{v}}$ agree on $N_n$ by construction, we conclude that $v\mid_{P_n} = f_{\bar{v}}$. Since $f_{\bar{v}}$ is a generator of $\Ind_{N_n}^{P_n}\psi_R$, we conclude.
\end{proof}

Let $W: G_n \to R$ be an element of $\calw(V,\psi_R)$ and let $\widetilde{W}$ be the function defined by
    \[ \widetilde{W}(g) = W(w_n (^\iota g)), \]
where $w_n$ is defined to be the antidiagonal matrix in $G_n$ with $1$'s along the antidiagonal, and ${^\iota}g := {^t}g^{-1}$. Then $\widetilde{W}(ng) = W(w_n({^\iota}n) ({^\iota}g)) = \psi_R^{-1}(n)W(w_n({^\iota}g)) = \psi_R^{-1}(n)\widetilde{W}(g)$ for all $n \in N_n$ and so $\widetilde{W}$ defines an element of $\calw({^\iota}V,\psi_R^{-1})$, where ${^\iota}V$ denotes the representation given by precomposing $V$ with the involution ${^\iota}$.

\subsection{Exceptional representations}

Later when defining gamma factors for pairs of representations we will need to exclude certain exceptional pairs. The term ``exceptional'' follows \cite[Section 17]{ps}, which studies representations of $\GL_2(\F_q)$ on $\C$-vector spaces and defines the notion of exceptional for \textit{characters}. Our definition is a higher-dimensional generalization of \textit{op. cit}.

\begin{definition} \label{def exceptional}
If $(\pi,V) \in \Rep_R(G_n)$ and $(\pi',V') \in \Rep_R(G_m)$ we say that $(V,V')$ is an \textit{exceptional pair}, or that $V'$ is \emph{exceptional for $V$} (or vice versa) if there exists an integer $t \in \{1,\dots,\min(m,n) \}$ such that
\[ \bil_{G_t}(\calw(V,\psi_R)^{(n-t)}, \calw(V',\psi_R^{-1})^{(m-t)}, \one)\neq\{0\}. \]
\end{definition}

We remark that the notion of exceptional pair only depends on the Whittaker models of the representations.

\section{Functional equation}
\label{section functional equation}

Fix $(\pi,V) \in \Rep_R(G_n)$ and $(\pi',V') \in \Rep_R(G_m)$ both of Whittaker type. Assume that~$\pi'$ is not exceptional for $\pi$. In this section we construct a gamma factor $\gamma(\pi \times \pi', \psi_R)$ for the pair $(\pi,\pi')$. Since this will only depend on the Whittaker models, we make the following abbreviations to ease the notation in this section:
    \[ \calw := \calw(V,\psi_R) \text{ and }
    \calw' := \calw(V',\psi_R^{-1}). \]

\subsection{Gamma factor and functional equation when \texorpdfstring{$n > m$}{n>m}}\label{sec:ngm}

We first suppose $n > m$; the $n = m$ case is slightly different, so we address it afterwards.

Recall the subgroup of $G_n$ given by
    \[ U_{n, n-m-1}:=\left\{\left(\begin{smallmatrix} I_{m+1} & z\\ & y\end{smallmatrix}\right)\ :\ z\in \text{Mat}_{m+1, n-m-1}(\mathbb{F}_q),\ y\in N_{n-m-1}\right\}. \]
Inflate $\calw'$ to an $R[G_mU_{n,n-m-1}]$-module by letting $U_{n,n-m-1}$ act trivially.

Consider the following finite field analogue of the integral defined in \cite[Section 2.4]{rankin-selberg-convolutions}. If $W: G_n \to R$ and $W': G_m \to R$ are two functions and $j \in \set{0,\dots,n-m-1}$ then let
\begin{equation}\label{pairing}
I(W,W';j) := \sum_{g\in N_m\backslash G_m}\sum_{y\in \mat_{j\times m}}W\paren{\begin{pmatrix} g&0&0\\y&I_j&0\\0&0&I_{n-m-j}\end{pmatrix}}W'(g).
\end{equation}

If we let
    \[ w_{n,m}:=\begin{pmatrix}I_m & \\ & w_{n-m}\end{pmatrix} \]
then a direct computation (done in detail in \cite[Lemma 5.2]{roditty}) shows that the maps
\begin{align*}
    (W,W') &\mapsto I(W,W';0) = \sum_{g\in N_m\backslash G_m}W\left(\begin{smallmatrix}g & 0\\ 0 & I_{n-m}\end{smallmatrix}\right)W'(g) \\
    (W,W') &\mapsto I(w_{n,m}\wt{W},\wt{W'};n-m-1) = \sum_{g\in N_m\backslash G_m}\sum_{y\in \mat_{n-m-1 \times m}}W\left(\begin{smallmatrix}0 & 1 &0\\
 0&0 & I_{n-m-1}\\ g &0 & y\end{smallmatrix} \right)W'(g)
\end{align*}
define elements of
    \[ \bil_{G_mU_{n,n-m-1}}(\calw, \calw',\one\otimes \psi_R), \]
where $\one \otimes \psi_R$ is the character acting trivially on $G_m$ and by $\psi_R$ on $U_{n,n-m-1}$.

In this section we use the calculus of the Bernstein--Zelevinsky functors to analyze this space of bilinear forms. Our main result is the following.

\begin{theorem}
\label{thm:bil_space_one_dimensional}
The space
    \[ \bil_{G_mU_{n,n-m-1}}(\calw, \calw',\one\otimes \psi_R) \]
is free of rank one over $R$ generated by $I(W,W';0)$.
\end{theorem}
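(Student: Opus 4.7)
My plan is to translate the bilinear form space via twisted coinvariants into $G_m$-bilinear forms on a $P_{m+1}$-module, and then run the Bernstein--Zelevinsky filtration machinery.

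\textbf{Reduction to coinvariants.} Since $|U_{n,n-m-1}|$ is invertible in $R$, the projector \eqref{coinvariants_projector} together with \autoref{top derivative explicit} yields
\[
\bil_{G_mU_{n,n-m-1}}(\calw,\calw',\mathbf{1}\otimes\psi_R)\;\cong\;\bil_{G_m}\bigl((\Phi^-)^{n-m-1}\calw,\calw',\mathbf{1}\bigr),
\]
with $G_m$ embedded in $P_{m+1}$ as the standard Levi. Writing $M := (\Phi^-)^{n-m-1}\calw \in \Rep_R(P_{m+1})$, the BZ filtration \eqref{bzfiltration} of $M$ has successive quotients $M_k/M_{k+1}\cong(\Phi^+)^{k-1}\Psi^+\calw^{(k+n-m-1)}$ for $k=1,\ldots,m+1$, using $M^{(k)} = \calw^{(k+n-m-1)}$. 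Since $\bil_{G_m}(-,\calw',\mathbf{1})$ is left-exact, it suffices to analyze its value on each graded piece.

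\textbf{Analysis of graded pieces.} For the bottom piece ($k = m+1$), one identifies $(\Phi^+)^m\Psi^+\calw^{(n)}$ with $\calw^{(n)}\otimes_R\ind_{N_{m+1}}^{P_{m+1}}\psi_R$, as in the proof of \autoref{lem:restriction_to_pn}. Using $P_{m+1}=U_{m+1}\rtimes G_m$ and $N_{m+1}=U_{m+1}\rtimes N_m$, this restricts to $\calw^{(n)}\otimes_R\ind_{N_m}^{G_m}\psi_R$ as a $G_m$-module. Frobenius reciprocity and \autoref{top_deriv_dual} then identify the bilinear form space with $\calw^{(n)}\otimes_R(\calw'^{(m,\psi_R^{-1})})^{\vee}$, which is $R$-free of rank one since $V$ and $V'$ are of Whittaker type. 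For the upper pieces $k=1,\ldots,m$, I would apply \autoref{lemma Gm to Pm} once to shed the outer $\Phi^+$ (replacing $\bil_{G_m}$ by $\bil_{P_m}$ and restricting $\calw'$ to $P_m$), then apply the BZ filtration to $\calw'|_{P_m}$ and invoke \autoref{bil spaces and bz functors} iteratively. At each step the mixed $\Psi^+/\Phi^+$ pairings vanish by \eqref{eq BZ bil is zero} while matched pairings cancel the functors via \eqref{eq BZ bil cancel Phi} and \eqref{eq BZ bil cancel Psi}. The only surviving contribution collapses to $\bil_{G_{m-k+1}}(\calw^{(k+n-m-1)},\calw'^{(k-1)},\mathbf{1})$, which vanishes by the non-exceptional hypothesis with $t = m-k+1 \in \{1,\ldots,m\}$. (For $k=1$ this is immediate: $\Psi^+\calw^{(n-m)}$ restricts to $\calw^{(n-m)}$ as a $G_m$-module, and the non-exceptional hypothesis with $t=m$ gives vanishing.)

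\textbf{Generator and main obstacle.} The iterative BZ calculus in the upper pieces is the principal technical hurdle, requiring careful tracking of which pairing space ($\bil_{G_j}$ vs.\ $\bil_{P_j}$) one inhabits at each stage of the reduction. Once rank one is established, it remains to check that $I(W,W';0)$ is a nonzero element of this space, which follows from an explicit choice of test vectors: by \autoref{lem:restriction_to_pn} one can pick $W \in \calw(V,\psi_R)$ whose restriction to $P_n$ is the characteristic function of $N_n \subset P_n$, and then pick $W'\in\calw(V',\psi_R^{-1})$ with $W'(I_m)\neq 0$, yielding $I(W,W';0) = W(I_n)W'(I_m)\neq 0$.
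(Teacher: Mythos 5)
Your overall strategy is the paper's: pass to $(U_{n,n-m-1},\psi_R)$-coinvariants to get $\bil_{G_m}((\Phi^-)^{n-m-1}\calw,\calw',\mathbf{1})$, run the Bernstein--Zelevinsky filtration, kill every graded piece except the bottom one using the non-exceptional hypothesis, and identify the bottom piece with $R$. Your index bookkeeping for the upper pieces (the surviving matched term being $\bil_{G_t}(\calw^{(n-t)},(\calw')^{(m-t)},\mathbf{1})$ with $t=m-k+1$, which vanishes by \autoref{def exceptional}) agrees with the paper's Cases 1--3. Your treatment of the bottom piece is a mild, clean variant: you compute $\bil_{G_m}((\Phi^+)^m\Psi^+(\mathbf{1}),\calw',\mathbf{1})\cong((\calw')^{(m,\psi_R^{-1})})^{\vee}\cong R$ directly by Mackey restriction to $G_m$ and Frobenius reciprocity, where the paper instead filters $\calw'$ once more over $P_m$ and records only an injection into $R$.

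The gap is in your final step. Over a general Noetherian $\Z[\frac{1}{p},\zeta_p]$-algebra $R$, an injection of the form space into $R$ together with the fact that $I(\cdot,\cdot;0)$ is a \emph{nonzero} element does not yield the stated conclusion: the image of the injection could be a proper ideal, and even inside a free rank-one module a nonzero element need not generate (take $R=k[\epsilon]/(\epsilon^2)$ and $I=\epsilon B$). What you must produce is a pair $(W,W')$ with $I(W,W';0)$ a \emph{unit}. Your test vectors essentially do this: with $W|_{P_n}$ supported on $N_n$ and $W(I_n)=1$, the sum collapses to $I(W,W';0)=W'(I_m)$, and since $\calw'\to(\calw')^{(m)}\cong R$ is surjective you may choose $W'$ with $W'(I_m)=1$, not merely $W'(I_m)\neq 0$. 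Then either observe that your composite injection into $R$ sends $I$ to a unit and is therefore surjective, hence an isomorphism carrying $I$ to a generator; or, as the paper does, note that the evaluation map $\ev_{W,W'}$ is a surjection from a submodule of the finitely generated module $R$ onto $R$, hence an isomorphism by \autoref{Orzech}, and it sends $I$ to $1$. With ``nonzero'' replaced by ``equal to $1$'' and this last inference supplied, your argument closes.
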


As a corollary, we deduce the functional equation which defines the gamma factor $\gamma(\pi \times \pi', \psi_R)$. 

\begin{corollary}
\label{cor:functional_equation}
There exists a unique element $\gamma(\pi\times \pi', \psi_R) \in R$ such that
    \[ I(W,W';0)\gamma(\pi\times \pi', \psi_R) = I(w_{n,m}\wt{W},\wt{W'};n-m-1) \]
for all $W \in \calw$ and $W' \in \calw'$.
\end{corollary}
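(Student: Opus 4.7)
The plan is to derive the corollary as an immediate consequence of \autoref{thm:bil_space_one_dimensional}. The discussion preceding the theorem has already observed, by a direct change-of-variable computation, that both of the maps
$$(W,W')\mapsto I(W,W';0)\qquad\text{and}\qquad (W,W')\mapsto I(w_{n,m}\wt W,\wt{W'};n-m-1)$$
lie in $\bil_{G_mU_{n,n-m-1}}(\calw,\calw',\textbf{1}\otimes\psi_R)$, so I would simply invoke this observation as the starting point. It is worth pausing to check that the $w_{n,m}$-twist in the second form does not disrupt the $G_mU_{n,n-m-1}$-equivariance: conjugating the block $U_{n,n-m-1}$ by $w_{n,m}$ sends it into the lower-triangular mirror subgroup stabilizing $\psi_R$ after applying the involution ${}^{\iota}$, which is exactly what the tildes build in.

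Granted the theorem, the remainder of the argument is a one-liner: the space in question is free of rank one over $R$ with distinguished generator $I(\cdot,\cdot;0)$, so the second bilinear form must equal a unique scalar multiple of the first, and I would define $\gamma(\pi\times\pi',\psi_R)\in R$ to be that scalar. The main obstacle is therefore not in this corollary at all, but in \autoref{thm:bil_space_one_dimensional} upstream, where the work of using the Bernstein--Zelevinsky filtration on $\calw|_{P_n}$ from \eqref{bzfiltration}, the properties of the derivative functors recalled in \autoref{ss properties of BZ functors}, and crucially the non-exceptionality hypothesis on the pair $(\pi,\pi')$, must combine to cut down the a priori large $\bil$-space to a free $R$-module of rank one generated by $I(\cdot,\cdot;0)$.
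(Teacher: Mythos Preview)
Your proposal is correct and matches the paper's approach exactly: the corollary is stated immediately after \autoref{thm:bil_space_one_dimensional} with no separate proof, because once both maps are known to lie in a rank-one free $R$-module generated by $I(\cdot,\cdot;0)$, the existence and uniqueness of $\gamma(\pi\times\pi',\psi_R)$ is automatic. Your additional remarks on the equivariance check and on where the real work resides are accurate but not needed for the formal deduction.
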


\begin{remark}
In the next section we prove a more general functional equation and use it to deduce that in fact $\gamma(\pi \times \pi', \psi_R) \in R^\times$, see \autoref{cor:gamma_invertible}.
\end{remark}

\begin{corollary}\label{compatibility_with_specialization}
If $f:R\to R'$ be a ring homomorphism, then
    \[ f(\gamma(\pi \times \pi',\psi_R)) = \gamma(\pi\otimes_{R}R'\times \pi'\otimes_{R}R',\psi_{R'}). \]
\end{corollary}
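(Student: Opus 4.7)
The plan is to exploit the uniqueness clause in \autoref{cor:functional_equation} by showing that $f(\gamma(\pi\times\pi',\psi_R))$ satisfies the defining functional equation for the base-changed pair. Two ingredients are needed: that Whittaker models base-change compatibly, and that the bilinear pairings $I(W,W';j)$ base-change compatibly. The first is \autoref{whitt_model_base_change}, which gives a canonical identification $\mathcal{W}(V\otimes_R R',\psi_{R'})\cong \mathcal{W}(V,\psi_R)\otimes_R R'$; explicitly, if $W\in \mathcal{W}$ is the Whittaker function attached to $v$, then $W\otimes 1$ corresponds to the function $g\mapsto f(W(g))$, which we denote $f_*(W)$. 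The same holds for $\mathcal{W}'$.

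Next, the sum \eqref{pairing} defining $I(W,W';j)$ is a finite $\mathbb{Z}$-linear combination of products $W(\cdot)W'(\cdot)$, so for any ring map $f\colon R\to R'$ we have $f(I(W,W';j)) = I(f_*(W), f_*(W'); j)$, and likewise for the twisted integrand $I(w_{n,m}\widetilde{W},\widetilde{W'};n-m-1)$ since the tildes and the translate by $w_{n,m}$ are operations on the function before summing. Applying $f$ to the functional equation of \autoref{cor:functional_equation} over $R$ therefore yields
\[
I(f_*W, f_*W'; 0)\cdot f(\gamma(\pi\times\pi',\psi_R)) = I(w_{n,m}\widetilde{f_*W},\widetilde{f_*W'};n-m-1)
\]
for every $W\in\mathcal{W}$ and $W'\in\mathcal{W}'$.

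To conclude, observe that the elements $f_*(W)$ generate $\mathcal{W}(V\otimes_R R',\psi_{R'})$ as an $R'$-module, and similarly for the primed version; both sides of the displayed equation are $R'$-bilinear in the pair of Whittaker functions, so the equation extends to arbitrary $W\in \mathcal{W}(V\otimes_R R',\psi_{R'})$ and $W'\in\mathcal{W}(V'\otimes_R R',\psi_{R'}^{-1})$. This is precisely the functional equation characterizing $\gamma(\pi\otimes_R R'\times \pi'\otimes_R R',\psi_{R'})$, so the uniqueness statement in \autoref{cor:functional_equation} applied over $R'$ forces equality.

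The only mild subtlety is that \autoref{cor:functional_equation} requires the target pair to be non-exceptional over $R'$; this is not automatic from non-exceptionality over $R$, since $\mathrm{Bil}$-spaces can acquire nonzero elements under base change, but it is an implicit hypothesis of the corollary rather than a step in the proof. No computation is required and there is no real obstacle: the argument is essentially the observation that the whole construction is natural in the coefficient ring.
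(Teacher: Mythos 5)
Your proposal is correct and follows essentially the same route as the paper: apply $f$ to both sides of the functional equation, use \autoref{whitt_model_base_change} to identify the base-changed Whittaker models, and invoke the uniqueness clause of \autoref{cor:functional_equation}. The paper's proof is a two-sentence version of exactly this argument; your added remarks about generation of the base-changed model by the $f_*(W)$ and about non-exceptionality over $R'$ being a standing hypothesis are correct fillings-in of details the paper leaves implicit.
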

\begin{proof}
By applying $f$ to both sides of the functional equation in \autoref{cor:functional_equation} and using \autoref{whitt_model_base_change}, we find that $f(\gamma(\pi\otimes \pi',\psi_R))$ satisfies the same functional equation as $\gamma(\pi\otimes_{R}R'\times \pi\otimes_{R}R',\psi_{R'})$. Therefore the uniqueness in \autoref{cor:functional_equation} implies they are equal.
\end{proof}

\begin{remark} 
Note that if $V$ is irreducible cuspidal, there are no representations that are exceptional for $V$, by \autoref{CuspidalCriterionVigneras}. Thus, in this case, we recover the functional equation in the special cases treated in \cite{roditty,nien}.
\end{remark}

The rest of this section is devoted to the proof of \autoref{thm:bil_space_one_dimensional}. Our strategy follows that of \cite[3.2]{kurinczuk-matringe} and \cite[3.2]{matringe-moss} in the setting of $p$-adic groups but there is a key lemma in the $p$-adic setting which completely fails in the setting of finite groups for lack of unramified characters, namely \cite[Lemma 3.6]{matringe-moss}. This failure is precisely what necessitates the exclusion of the exceptional representations for $V$ in \autoref{thm:bil_space_one_dimensional}. Without the exclusion of exceptional characters the theorem is false, c.f. \cite[Lemma 4.1.4, Theorem 4.3.3]{roditty}.  

Our main tool will be the properties of the Bernstein--Zelevinsky functors established in \autoref{bil spaces and bz functors} and \autoref{lemma Gm to Pm}. The proof of \autoref{thm:bil_space_one_dimensional} proceeds by several reductions steps, which we state as lemmas.

\begin{lemma} There is a canonical isomorphism
    \[ \bil_{G_mU_{n,n-m-1}}(\calw,\calw',\one\otimes \psi_R) \xrightarrow\sim \bil_{G_m}((\Phi^-)^{n-m-1}\calw, \calw', \one). \]
\end{lemma}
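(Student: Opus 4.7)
The plan is to apply the universal property of $(U_{n,n-m-1}, \psi_R)$-coinvariants, using \autoref{top derivative explicit} to identify those coinvariants with $(\Phi^-)^{n-m-1}\calw$. Starting from a form $B \in \bil_{G_mU_{n,n-m-1}}(\calw,\calw',\mathbf{1}\otimes\psi_R)$, the fact that $U_{n,n-m-1}$ was inflated to act trivially on $\calw'$ forces $B(u\cdot W, W') = \psi_R(u) B(W, W')$ for all $u \in U_{n,n-m-1}$. Hence $B$ factors uniquely through the quotient $\calw_{U_{n,n-m-1},\psi_R}$ in the first variable, and by \autoref{top derivative explicit} this quotient is exactly $(\Phi^-)^{n-m-1}\calw$.

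To see that the descended form lives in $\bil_{G_m}((\Phi^-)^{n-m-1}\calw, \calw', \mathbf{1})$, I need the $G_m$-action to descend compatibly to the coinvariants. For this, two facts suffice: first, $G_m$ (embedded in the top-left $m\times m$ block) normalizes $U_{n,n-m-1}$, since $U_{n,n-m-1}$ sits inside the unipotent radical of a standard parabolic whose Levi contains~$G_m$; second, the character $\psi_R$ on $U_{n,n-m-1}$ depends only on the diagonal entries of the $N_{n-m-1}$-block, which $G_m$-conjugation does not touch. Together these give $\psi_R(gug^{-1}) = \psi_R(u)$ for $g \in G_m$, so that the coinvariant space inherits a well-defined $G_m$-action and the descended form is $G_m$-invariant.

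The inverse direction is simply pullback along the canonical surjection $\calw \twoheadrightarrow (\Phi^-)^{n-m-1}\calw$ in the first variable: a $G_m$-invariant bilinear form on the target automatically satisfies the $(U_{n,n-m-1},\psi_R)$-equivariance after pullback, and $G_m$-invariance is preserved. These two constructions are mutually inverse by the universal property, giving the claimed canonical isomorphism.

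I do not expect a real obstacle here; the lemma is the standard first reduction in Rankin--Selberg arguments (compare \cite[Section 3.2]{matringe-moss} and \cite[Section 3.2]{kurinczuk-matringe}) and the work is entirely bookkeeping with the semidirect product $G_m \ltimes U_{n,n-m-1}$. The only point requiring a moment's thought is the normalization/character-preservation check in the previous paragraph; the genuine difficulty of \autoref{thm:bil_space_one_dimensional} will only appear in subsequent reduction steps, where the non-exceptionality hypothesis on $\pi'$ enters.
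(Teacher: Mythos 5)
Your approach matches the paper's: both factor $B$ through the $(U_{n,n-m-1},\psi_R)$-coinvariants of $\calw$ in the first variable, identify that quotient with $(\Phi^-)^{n-m-1}\calw$ via \autoref{top derivative explicit}, and observe that the universal property makes pullback and descent inverse to one another. One small imprecision in your $G_m$-equivariance check: the character $\psi_R$ on $U_{n,n-m-1}$ is the sum of all the superdiagonal entries, so it sees not only the superdiagonal of the $N_{n-m-1}$-block $y$ but also the $(m+1,m+2)$ entry, which sits in the bottom row of the $z$-block; since $G_m$-conjugation only acts on the first $m$ rows of $z$, that entry and the $y$-block are both untouched, so your conclusion that $\psi_R(gug^{-1})=\psi_R(u)$ for $g\in G_m$ is still correct.
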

\begin{proof}
By definition, $\bil_{G_mU_{n,n-m-1}}(\calw,\calw',\one\otimes \psi_R)$ is
\begin{equation}\label{eq reduction 1 hom space}
    \hom_{R[G_mU_{n,n-m-1}]}(\calw\otimes \calw',\one\otimes \psi_R),
\end{equation}
where $G_mU_{n,n-m-1}$ acts diagonally on $\calw \otimes \calw'$. But any such homomorphism must factor through $\tau \otimes \calw'$ where $\tau$ is the quotient of $\calw$ by the submodule generated by elements of the form $uW - \psi_R(u)W$ for $u\in U_{n,n-m-1}$, $W\in \calw$. Moreover, this quotient is universal for this property, so \autoref{eq reduction 1 hom space} is isomorphic to 
    \[ \hom_{R[G_m]}(\tau\otimes \calw',\one). \]
Now the result follows from the fact that $\tau = (\Phi^-)^{n-m-1}\calw$, see \autoref{top derivative explicit}. 
\end{proof}

We now consider the Bernstein--Zelevinsky filtration of $\calw$ given by \autoref{bzfiltration}. After applying $(\Phi^-)^{n-m-1}$ to the filtration we have
    \[ 0\subset (\Phi^-)^{n-m-1}\calw_n\subset  \cdots \subset (\Phi^-)^{n-m-1}\calw_1 = (\Phi^-)^{n-m-1}\calw, \]
which is now a filtration of representations of $P_{m+1}$. Following \autoref{eq quotients from derivatives} and exactness of~$\Phi^-$, the successive quotients are given by 
$$(\Phi^-)^{n-m-1}(\calw_k/\calw_{k+1}) = (\Phi^-)^{n-m-1}(\Phi^+)^{k-1}\Psi^+(\calw^{(k)}).$$ Note that since $\calw^{(n)} = V^{(n)} = \one$ by assumption, the identity $\Phi^-\Phi^+\cong \id$ implies that the bottom step of the filtration is the submodule 
$$(\Phi^-)^{n-m-1}(\Phi^+)^{n-1}\Psi^+(\one) = (\Phi^+)^{m}\Psi^+(\one)\subset (\Phi^-)^{n-m-1}\calw.$$

\begin{lemma} \label{lemma bottom step 1}
The restriction map
\begin{align*}
    \bil_{G_m}((\Phi^-)^{n-m-1}\calw,\calw',\one) &\to \bil_{G_m}((\Phi^+)^m\Psi^+(\one), \calw',\one) \\
    B &\mapsto \restr{B}{(\Phi^+)^{m}\Psi^+(\one)\times \calw'}
\end{align*}
is injective.
\end{lemma}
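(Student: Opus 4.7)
My plan is to use left-exactness of $\bil_{G_m}(-, \calw', \mathbf{1})$ to reduce the injectivity claim to a vanishing statement on the quotient $Q := (\Phi^-)^{n-m-1}\calw / (\Phi^+)^m\Psi^+(\mathbf{1})$, and then to deduce that vanishing from the non-exceptional hypothesis on $\pi'$ together with the Bernstein--Zelevinsky calculus of \autoref{bil spaces and bz functors} and \autoref{lemma Gm to Pm}. Applying the exact functor $(\Phi^-)^{n-m-1}$ to the filtration \autoref{bzfiltration} of $\calw$, the module $Q$ acquires a filtration whose successive quotients are $(\Phi^-)^{n-m-1}(\Phi^+)^{k-1}\Psi^+(\calw^{(k)})$ for $k = 1, \ldots, n-1$, so by d\'evissage it suffices to show that $\bil_{G_m}(-, \calw', \mathbf{1})$ vanishes on each of these.

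Using the identities $\Phi^-\Phi^+ \cong \id$ and $\Phi^-\Psi^+ = 0$ from \autoref{ss properties of BZ functors}, the $k$-th subquotient is already zero when $k < n-m$, and for $k \geq n-m$ it simplifies to $(\Phi^+)^{j}\Psi^+(\calw^{(n-m+j)})$ with $j = k - (n-m) \in \{0, \ldots, m-1\}$. When $j = 0$ this equals $\calw^{(n-m)}$ as a $G_m$-module, so the bilinear form space becomes $\bil_{G_m}(\calw^{(n-m)}, \calw', \mathbf{1})$, which vanishes by the non-exceptional hypothesis at $t = m$ (recall \autoref{def exceptional}). For $j \geq 1$, the plan is to apply \autoref{lemma Gm to Pm} to rewrite the bil space as $\bil_{P_m}((\Phi^+)^{j-1}\Psi^+(\calw^{(n-m+j)}), \calw', \mathbf{1})$ and then filter $\calw'$ as a $P_m$-module with successive quotients $(\Phi^+)^{i-1}\Psi^+(\calw'^{(i)})$, reducing the task to verifying $\bil_{P_m}((\Phi^+)^{j-1}\Psi^+(\calw^{(n-m+j)}), (\Phi^+)^{i-1}\Psi^+(\calw'^{(i)}), \mathbf{1}) = 0$ for every $i \in \{1, \ldots, m\}$.

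Iterated application of the $\Phi^+$-cancellation identity \autoref{eq BZ bil cancel Phi} then forces each such pairing into one of two shapes: when $i \neq j$ one reaches a $\Psi^+$-versus-$\Phi^+$ pairing killed by \autoref{eq BZ bil is zero}, and when $i = j$ one reaches a $\Psi^+$-versus-$\Psi^+$ pairing, which by \autoref{eq BZ bil cancel Psi} becomes $\bil_{G_{m-j}}(\calw^{(n-m+j)}, \calw'^{(j)}, \mathbf{1})$ and vanishes by the non-exceptional hypothesis at $t = m - j \in \{1, \ldots, m-1\}$. I expect the main obstacle to be the bookkeeping in this last step: one has to arrange the cancellations carefully enough that every surviving pairing lands either in the regime of \autoref{eq BZ bil is zero} or in the range $t \in \{1, \ldots, \min(m,n)\}$ covered by \autoref{def exceptional}, with no off-by-one case falling outside the permitted range of~$t$.
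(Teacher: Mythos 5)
Your proposal is correct and follows essentially the same route as the paper's proof: reduce by d\'evissage to the vanishing of $\bil_{G_m}(-,\calw',\mathbf{1})$ on the subquotients $(\Phi^-)^{n-m-1}(\Phi^+)^{k-1}\Psi^+(\calw^{(k)})$, kill the range $k<n-m$ via $\Phi^-\Psi^+=0$, handle $k=n-m$ by non-exceptionality at $t=m$, and for the remaining range pass to $P_m$ via \autoref{lemma Gm to Pm}, filter $\calw'$, and use \autoref{eq BZ bil cancel Phi}, \autoref{eq BZ bil is zero}, and \autoref{eq BZ bil cancel Psi} to land either in a vanishing $\Psi^+$-versus-$\Phi^+$ pairing or in the non-exceptionality condition at $t=m-j\in\{1,\dots,m-1\}$. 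Your index bookkeeping (with $j=k-(n-m)$, corresponding to the paper's $t=n-i-1$) checks out, so no off-by-one issue arises.
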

\begin{proof}
If $B|_{(\Phi^+)^{m}\Psi^+(\one)\times \calw'} = 0$, it defines a bilinear form on the next quotient
    \[ (\Phi^-)^{n-m-1}(\Phi^+)^{n-2}\Psi^+(\calw^{(n-1)})\times \calw'. \]
In fact, we will show that the spaces of bilinear forms on each successive quotient,
$$\bil_{G_m}((\Phi^-)^{n-m-1}(\Phi^+)^{i}\Psi^+(\calw^{(i+1)}), \calw',\one),$$
are identically zero for $i=0,\dots, n-2$. We will consider three cases.\\

\noindent\textbf{Case 1:} $i <n-m-1$.

The module $(\Phi^-)^{n-m-1}(\Phi^+)^{i}\Psi^+(\calw^{(i+1)})$ is zero since $\Phi^-\Psi^+\cong 0$ and $\Phi^{-} \Phi^+ \cong \text{id}$, see \autoref{ss properties of BZ functors}.  \\

\noindent\textbf{Case 2:} $i= n-m-1$.

We have 
\begin{align*}
\bil_{G_m}((\Phi^-)^{n-m-1}(\Phi^+)^{i}\Psi^+(\calw^{(i+1)}), \calw',\one) 
&= \bil_{G_m}(\Psi^+(\calw^{(n-m)}), \calw',\one)\\
&= \bil_{G_m}(\calw^{(n-m)}, \calw', \one)\\
&= \{0\},
\end{align*}
where the last equality is from the non-exceptional assumption.\\

\noindent\textbf{Case 3:} $i>n-m-1$.

In this case, we are considering the space
$$\bil_{G_m}((\Phi^+)^{i-(n-m-1)}\Psi^+(\calw^{(i+1)}), \calw',\one).$$ To keep things tidy, we introduce a new index:
$t := n-i-1,$ so that 
\begin{align*}
i-(n-m-1) &= m - t\\
i+1 &= n-t.
\end{align*}
Because $n-m\leq i\leq n-2$ in the present case, the range of $t$ is
$1\leq t \leq m-1.$ Our goal is to prove
$$\bil_{G_m}((\Phi^+)^{m-t}\Psi^+(\calw^{(n-t)}), \calw',\one)=\{0\}.$$

First, we can restrict to $P_m$  following \autoref{lemma Gm to Pm}, 
    \[ \bil_{G_m}((\Phi^+)^{m-t}\Psi^+(\calw^{(n-t)}), \calw',\one)=\bil_{P_m}((\Phi^+)^{m-t-1}\Psi^+(\calw^{(n-t)}),\calw',\one). \]
As a representation of $P_m$, we filter $\calw'$ using \autoref{bzfiltration}: the successive quotients in the filtration are
$(\Phi^+)^{m-t'-1}\Psi^+((\calw')^{(m-t')})$ with $0\leq t'\leq m-1$.

At the bottom of the filtration, where $t'=0$, our bilinear forms restrict to elements of
    \[ \bil_{P_m}((\Phi^+)^{m-t-1}\Psi^+(\calw^{(n-t)}),(\Phi^+)^{m-1}\Psi^+((\calw')^{(m)}),\one),\]
which equals zero by \autoref{eq BZ bil cancel Phi} and \autoref{eq BZ bil is zero} since $t>0$. Similarly, when a bilinear form is restricted to any step in the filtration where $t\neq t'$, the same argument gives
$$\bil_{P_m}((\Phi^+)^{m-t-1}\Psi^+(\calw^{(n-t)}),(\Phi^+)^{m-t'-1}\Psi^+((\calw')^{(m-t')}),\one)=\{0\}.$$

Thus it remains only to treat the case where $t = t'$, where
$$\bil_{P_m}((\Phi^+)^{m-t-1}\Psi^+(\calw^{(n-t)}),(\Phi^+)^{m-t-1}\Psi^+((\calw')^{(m-t)}),\one)=\{0\},$$ by the assumption that $V'$ is non-exceptional for $V$.
\end{proof}

\begin{lemma}\label{concluding_lemma}
\begin{align*}
    \bil_{P_m}((\Phi^+)^{m-1}\Psi^+(\one), \calw', \one) &\hookrightarrow \bil_{P_m}((\Phi^+)^{m-1}\Psi^+(\one),(\Phi^+)^{m-1}\Psi^+(\one),\one)\\
    &= \bil_{G_0}(\one,\one,\one)\\
    &= R
\end{align*}
\end{lemma}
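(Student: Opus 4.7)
The plan is to filter $\calw'$ as a $P_m$-representation via \autoref{bzfiltration}, whose successive quotients are $(\Phi^+)^{k-1}\Psi^+((\calw')^{(k)})$ for $k = 1, \dots, m$, and to show using \autoref{bil spaces and bz functors} that only the bottom step contributes. Since $V'$ is of Whittaker type, $(\calw')^{(m)}$ is free of rank one over $R$, so the bottom step of the filtration is precisely $(\Phi^+)^{m-1}\Psi^+(\textbf{1})$, matching the second slot on the right-hand side of the claim.

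For the injectivity, I will show that each intermediate quotient (indexed by $k < m$) contributes no bilinear forms:
$$\bil_{P_m}\bigl((\Phi^+)^{m-1}\Psi^+(\textbf{1}),\,(\Phi^+)^{k-1}\Psi^+((\calw')^{(k)}),\,\textbf{1}\bigr) = 0.$$
Apply \autoref{eq BZ bil cancel Phi} a total of $k-1$ times to cancel $\Phi^+$'s on both sides, reducing this to
$$\bil_{P_{m-k+1}}\bigl((\Phi^+)^{m-k}\Psi^+(\textbf{1}),\,\Psi^+((\calw')^{(k)}),\,\textbf{1}\bigr).$$
Since $m-k\geq 1$, the first argument still carries an outermost $\Phi^+$, so \autoref{eq BZ bil is zero}, combined with the tensor-swap symmetry $\bil_G(V,V',\textbf{1}) \cong \bil_G(V',V,\textbf{1})$, forces this space to vanish. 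The short exact sequences between successive steps of the filtration then produce, via the left-exact functor $\bil_{P_m}((\Phi^+)^{m-1}\Psi^+(\textbf{1}),-,\textbf{1})$, a descending chain of inclusions ending in the bottom step, which yields the desired injection.

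For the final identifications, repeatedly apply \autoref{eq BZ bil cancel Phi} a total of $m-1$ times to cancel all outer $\Phi^+$'s in the target, then apply \autoref{eq BZ bil cancel Psi} once:
$$\bil_{P_m}\bigl((\Phi^+)^{m-1}\Psi^+(\textbf{1}),(\Phi^+)^{m-1}\Psi^+(\textbf{1}),\textbf{1}\bigr) \cong \bil_{P_1}(\Psi^+(\textbf{1}),\Psi^+(\textbf{1}),\textbf{1}) \cong \bil_{G_0}(\textbf{1},\textbf{1},\textbf{1}).$$
Since $G_0$ is the trivial group, the final space is simply $\Hom_R(R,R) = R$. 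I expect no real obstacle here: unlike the $t = t'$ subcase in the proof of \autoref{lemma bottom step 1}, which genuinely needed the non-exceptional hypothesis to kill a $\bil_{G_t}(\calw^{(n-t)},(\calw')^{(m-t)},\textbf{1})$ term, the first argument here is the rigid $(\Phi^+)^{m-1}\Psi^+(\textbf{1})$, and a matching-depth pairing only occurs at $k = m$, so all other pairings vanish from purely formal Bernstein--Zelevinsky calculus.
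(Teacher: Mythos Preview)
Your proof is correct and follows essentially the same approach as the paper's own proof: both filter $\calw'$ via \autoref{bzfiltration}, use \autoref{eq BZ bil cancel Phi} and \autoref{eq BZ bil is zero} to kill the successive quotients for $k < m$, and then cancel down to $\bil_{G_0}(\mathbf{1},\mathbf{1},\mathbf{1}) = R$ using \autoref{eq BZ bil cancel Phi} and \autoref{eq BZ bil cancel Psi}. Your write-up is in fact slightly more explicit than the paper's in spelling out the cancellation steps and the left-exact descent through the filtration.
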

\begin{proof}
First, we note that the second isomorphism is given by \autoref{eq BZ bil cancel Psi} and \autoref{eq BZ bil cancel Phi} of \autoref{bil spaces and bz functors}, and the third isomorphism is trivial.

Next, we will consider the injection on the first line. Consider the filtration of $\calw'$ as in \autoref{bzfiltration}. From \autoref{eq quotients from derivatives}, the bottom step of the filtration is~$(\Phi^+)^{m-1}\Psi^+(\one)$. The injection on the first line of the lemma is given by restricting a bilinear form $B$ to this bottom step in the second factor. We will prove that this restriction map is injective. Assume a bilinear form is zero when restricted to
    \[ (\Phi^+)^{m-1}\Psi^+(\one)\times (\Phi^+)^{m-1}\Psi^+(\one). \]
Then it defines a bilinear form in
    \[ \bil_{P_m}((\Phi^+)^{m-1}\Psi^+(\one),(\Phi^+)^{i-1}\Psi^+((\calw')^{(i)}),\one) \]
for an integer $i<m$. But this space is zero by the same argument as in the proof of \autoref{lemma bottom step 1}, thanks to \autoref{eq BZ bil cancel Phi} and \autoref{eq BZ bil is zero} of \autoref{bil spaces and bz functors}. Hence $B=0$, and the injectivity is proved.
\end{proof}

Finally, we use the following fact to put everything together.

\begin{theorem}[{{\cite{Orzech}, \cite{OrzechMO}, \cite{Grinberg}}}]\label{Orzech}
Suppose $A$ is a commutative ring, $M$ is a finitely generated $A$-module and $N \subset M$ is an $A$-submodule. Then any surjection $f: N \twoheadrightarrow M$ is an isomorphism.
\end{theorem}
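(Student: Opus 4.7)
The plan is to reduce the claim to the classical Cayley--Hamilton or determinant trick, carried out on a free cover of $M$ rather than on $M$ itself, since $f$ is only defined on the submodule $N$. First I would pick generators $m_1,\dots,m_r$ of $M$, lift each $m_i$ to some $n_i\in N$ with $f(n_i)=m_i$, and use that $n_i\in N\subseteq M$ to write $n_i=\sum_j a_{ij}m_j$ for scalars $a_{ij}\in A$. Let $\pi\colon \widetilde{M}:=A^r\twoheadrightarrow M$ be the surjection $e_i\mapsto m_i$, and make $\widetilde{M}$ into an $A[x]$-module by declaring $x\cdot e_i=\sum_j a_{ij}e_j$.

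Two elementary observations then drive the argument. First, $\pi(xv)\in N$ for every $v\in\widetilde{M}$, since this holds on generators via $\pi(xe_i)=n_i\in N$. Second, $f(\pi(xv))=\pi(v)$ for every $v\in\widetilde{M}$, since on generators $f(\pi(xe_i))=f(n_i)=m_i=\pi(e_i)$. Iterating these two statements shows $\pi(x^k v)\in N$ for all $k\geq 1$, so the expression $f^k(\pi(x^k v))$ is well defined, and the identity $\pi(v)=f^k(\pi(x^k v))$ holds for all $k\geq 1$.

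To prove injectivity, I would suppose $m\in N$ satisfies $f(m)=0$, pick $v\in\widetilde{M}$ with $\pi(v)=m$, and set $m_k:=\pi(x^{k-1}v)$, so that $m_1=m$. The two observations give $m_k\in N$ and $f(m_k)=m_{k-1}$ for $k\geq 2$, from which a short induction yields $f^r(m_k)=0$ for every $k\leq r$, while $f^r(m_{r+1})=m$. Cayley--Hamilton applied to the matrix $(a_{ij})$ acting on $\widetilde{M}=A^r$ then produces a monic relation
\[
x^r v \;=\; c_1 x^{r-1} v - c_2 x^{r-2} v + \cdots \pm c_r v
\]
in $\widetilde{M}$; pushing this down by $\pi$ gives an equation
\[
m_{r+1} \;=\; c_1 m_r - c_2 m_{r-1} + \cdots \pm c_r m
\]
inside $N$, and applying $f^r$ to both sides yields $m=0$.

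The main obstacle, and the reason for routing the argument through the auxiliary free module $\widetilde{M}$ rather than trying to endow $M$ itself with an endomorphism extending $f$, is that there is no canonical way to promote the assignment $m_i\mapsto n_i$ to an $A$-linear endomorphism of $M$: the linear relations among the $m_i$ need not translate into relations among the $n_i$. Working on $\widetilde{M}$ sidesteps this issue because the $x$-action on a free module is automatically consistent, and it guarantees that $f$ is only ever applied to elements that have been independently verified to lie in $N$, which is exactly what the two observations supply at every stage of the iteration.
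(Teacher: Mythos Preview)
The paper does not supply its own proof of this theorem; it merely states the result with citations to Orzech, the MathOverflow discussion, and Grinberg. Your argument is correct and is essentially the standard Cayley--Hamilton proof appearing in those references: one lifts the partially defined map $f$ to an honest endomorphism of a free cover $A^r$ via the matrix $(a_{ij})$, applies Cayley--Hamilton there, and pushes the resulting monic relation back down to $N$ to annihilate any element of $\ker f$.
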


\begin{proof}[Proof of \autoref{thm:bil_space_one_dimensional}]
The above three lemmas give us an injection
    \[ \bil_{G_m U_{n,n-m-1}}(\calw, \calw', \one \otimes \psi_R) \hookrightarrow R \]
By \autoref{Orzech} it suffices to find $W \in \calw$ and $W' \in \calw'$ such that $I(W,W';0) = 1$, because then the evaluation map
    \[ \ev_{W,W'}: \bil_{G_mU_{n,n-m-1}}(\calw,\calw',\one \otimes \psi_R) \twoheadrightarrow R \]
sends $I(-,-;0)$ to a unit and is therefore surjective. 

By \autoref{top derivative explicit}, the map $\lambda$ defined by $V'\to V'_{N_m,\psi_R^{-1}}\xrightarrow{\sim}R$ is surjective. Recall that Frobenius reciprocity associates to $\lambda$ a map $V'\to \calw'$, which is defined by $v\mapsto W_v$ where $W_v(g):= \lambda(gv)$. There is a natural ``evaluation at the identity'' map $\text{ev}_1:\calw'\to R$ given by
\begin{align*}
\text{ev}_1: \mathcal{W'} & \to R\\
W_v&\mapsto W_v(1).
\end{align*}
Since $W_v(1) = \lambda(v)$, the surjectivity of $\lambda$ implies there exists $W_v\in \calw'$ such that $W_v(1) = 1$. Let $W'$ be any such choice of $W_v$. 

Given an arbitrary element $\phi$ of $\ind_{N_n}^{P_n}\psi_R$,  \autoref{lem:restriction_to_pn} tells us there exists $W$ in $\calw$ such that $W|_{P_n}=\phi$. Note that when we evaluate the sum defining $I(W,W';0)$ we only ever evaluate $W$ on elements of $P_n$, so we may choose $\phi$ so it is supported only on $N_n$ and such that $\phi(1)=1$. Now for any choice of $W\in \calw$ restricting to $\phi$, we obtain $I(W,W';0)=1$.
\end{proof}

\begin{remark}
Note that if $R$ is a field, this final surjectivity argument is unnecessary because any nonzero bilinear form (e.g. $I(-,-;0)$) will provide a basis vector.
\end{remark}

\subsection{More general functional equation when \texorpdfstring{$n > m$}{n>m}}
In this subsection we use \autoref{cor:functional_equation} to deduce a slightly more general functional equation for the gamma factor. First we introduce some notation. Assume the same notation from the previous section.

\begin{corollary}
\label{cor:more_general_functional_eqn}
Let $j$ be an integer, $0\leq j\leq n-m-1$. In the same setup as \autoref{cor:functional_equation}, we have
    \[ I(W,W';j)\gamma(\pi \times \pi',\psi_R) = I(w_{n,m}\widetilde{W},\widetilde{W'};k), \]
where $k = n-m-1-j$.
\end{corollary}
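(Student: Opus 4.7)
The plan is to reduce the general-$j$ case to the $j=0$ case of \autoref{cor:functional_equation} via a right-translation identity. For each $y \in \mat_{j,m}(\F_q)$, define the lower unipotent element
\[ \eta_{j,y} = \begin{pmatrix} I_m & 0 & 0 \\ y & I_j & 0 \\ 0 & 0 & I_{n-m-j} \end{pmatrix} \in G_n, \]
and let $\eta_j = \sum_y \eta_{j,y} \in R[G_n]$. The direct product $\begin{pmatrix} g & 0 \\ 0 & I_{n-m}\end{pmatrix} \eta_{j,y} = \begin{pmatrix} g & 0 & 0 \\ y & I_j & 0 \\ 0 & 0 & I_{n-m-j}\end{pmatrix}$ shows that
\[ I(W, W'; j) = I(\eta_j \cdot W, W'; 0), \]
where $\eta_j$ acts by the right regular representation. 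Since $\calw$ is $G_n$-stable, $\eta_j \cdot W \in \calw$, so applying \autoref{cor:functional_equation} gives
\[ I(W, W'; j)\,\gamma(\pi \times \pi', \psi_R) = I\bigl(w_{n,m}\,\widetilde{\eta_j \cdot W},\,\widetilde{W'};\, n-m-1\bigr). \]

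To identify the right-hand side with $I(w_{n,m}\widetilde{W}, \widetilde{W'}; k)$, first use the intertwining identity $\widetilde{h \cdot W} = {}^\iota h \cdot \widetilde{W}$, which is immediate from the definitions since ${}^\iota\colon G_n \to G_n$ is a group homomorphism satisfying $w_n = {}^\iota w_n$. This yields $\widetilde{\eta_j \cdot W} = {}^\iota\eta_j \cdot \widetilde{W}$, and explicit computation shows that ${}^\iota\eta_{j,y}$ is the upper unipotent matrix with $-y^t$ in rows $1,\ldots,m$ and columns $m+1,\ldots,m+j$. Conjugating by $w_{n,m}$ relocates this block to the last $j$ columns of the top $m$ rows, producing a decomposition $w_{n,m}\,\widetilde{\eta_j \cdot W} = \xi_j \cdot (w_{n,m}\widetilde{W})$ for an explicit sum $\xi_j \in R[G_n]$ of upper-unipotent matrices indexed by $z \in \mat_{m,j}(\F_q)$.

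The final step is to apply Whittaker invariance of $w_{n,m}\widetilde{W}$ to collapse $I(\xi_j \cdot w_{n,m}\widetilde{W}, \widetilde{W'}; n-m-1)$ to $I(w_{n,m}\widetilde{W}, \widetilde{W'}; k)$. Expanding as a sum over $g \in N_m \backslash G_m$, $Y \in \mat_{n-m-1,m}(\F_q)$, and the $z$-variables indexing $\xi_j$, one rewrites each matrix product in the form $L \cdot M$ for $L \in N_n$ to extract a $\psi_R$-character depending linearly on $z$ and on specific entries of $Y$. Summing over the $z$-variables, orthogonality of characters of $\F_q$ forces the corresponding entries of $Y$ to vanish, and the remaining sum matches precisely the pairing $I(w_{n,m}\widetilde{W}, \widetilde{W'}; k)$ with $k = n-m-1-j$. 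The principal obstacle is this last step: carefully identifying the $N_n$-conjugator in each decomposition $M \cdot \xi_{j,z} = L \cdot M'$ and bookkeeping the Fourier collapse---including any normalization factors arising from the size of the sum---to reproduce the target pairing exactly.
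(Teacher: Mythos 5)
Your overall strategy --- translate $W$ by the lower-unipotent elements $\eta_{j,y}$ to reduce to the $j=0$ case of \autoref{cor:functional_equation}, push the translation through $W\mapsto w_{n,m}\widetilde W$ via $\widetilde{hW}={}^\iota h\cdot\widetilde W$, and collapse the resulting unipotent sum by character orthogonality --- is the standard one, and it is essentially the argument the paper's one-line proof (a citation to Roditty, Thm.~5.4) points to. The identities $I(W,W';j)=\sum_y I(\eta_{j,y}W,W';0)$ and $w_{n,m}\widetilde{\eta_{j,y}W}=\xi_{j,z}\cdot(w_{n,m}\widetilde W)$ are correct. The problem is exactly the step you flag as the ``principal obstacle'': it is not mere bookkeeping, and when carried out the normalization factor you worry about genuinely appears and does not cancel. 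Take $n=3$, $m=1$, $j=1$ (so $k=0$). Applying the $j=0$ functional equation to each $\eta_yW$ and using the paper's explicit formula for the right-hand side gives
\[
\sum_{y\in\F_q} I\bigl(w_{3,1}\widetilde{\eta_yW},\widetilde{W'};1\bigr)
=\sum_{x\in\F_q^\times}\sum_{Y\in\F_q}\sum_{y\in\F_q}
W\!\begin{pmatrix}y&1&0\\0&0&1\\x&0&Y\end{pmatrix}W'(x),
\]
and the factorization
\[
\begin{pmatrix}y&1&0\\0&0&1\\x&0&Y\end{pmatrix}
=\begin{pmatrix}1&-yY/x&y/x\\0&1&0\\0&0&1\end{pmatrix}
\begin{pmatrix}0&1&0\\0&0&1\\x&0&Y\end{pmatrix}
\]
produces the character sum $\sum_{y}\psi(-yY/x)=q\,\delta_{Y=0}$, so the total equals $q\,I(w_{3,1}\widetilde W,\widetilde{W'};0)$ rather than $I(w_{3,1}\widetilde W,\widetilde{W'};0)$. (The same factor $q$ appears if you instead work on the $\widetilde W$ side with $\xi_{j,z}$, as in your sketch.) Thus your argument, completed, proves $I(W,W';j)\,\gamma(\pi\times\pi',\psi_R)=q^{jm}\,I(w_{n,m}\widetilde W,\widetilde{W'};k)$; you must either locate a compensating factor (I do not see one) or conclude that the corollary requires a power-of-$q$ normalization of the pairings, as in the conventions of the cited reference. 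Since $q\in R^\times$ this does not damage the downstream use in \autoref{cor:gamma_invertible}, but it does mean the identity as displayed cannot be reached by your route.

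A second, more technical gap: for $j\ge 2$ the one-shot decomposition $M\xi_{j,z}=LM'$ with $L\in N_n$ is more delicate than your sketch suggests. The columns $n-j+1,\dots,n-1$ of $M$ meet the rows carrying the block $Y$, so clearing the $z$-columns by left multiplication by $N_n$ forces $M'$ to carry a modified block $Y'$ depending on $z$, and the requirement that the first $m$ columns of $LM'$ reproduce those of $M\xi_{j,z}$ imposes nontrivial cross-term constraints; the character you extract is then a function of $Y$ while the surviving Whittaker value is a function of $Y'$, and these must be disentangled before summing. The clean way to organize this --- and the way the cited argument runs --- is to induct on $j$, introducing one row of $y$ (equivalently one column of $z$) at a time, so that at each step the conjugating element interacts only with a single row and the Fourier collapse kills exactly one row of $Y$, contributing one factor of $q^{m}$ per step.
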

\begin{proof}
The same argument as in \cite[Thm 5.4]{roditty} works here.
\end{proof}

\begin{corollary}\label{cor:gamma_invertible}
In the same setup as \autoref{cor:functional_equation}, the element $\gamma(\pi\times\pi',\psi_R)$ is invertible in $R$.
\end{corollary}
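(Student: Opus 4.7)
The plan is to exhibit specific Whittaker functions $W \in \calw$ and $W' \in \calw'$ for which one side of an appropriate functional equation equals $1$, which will immediately force $\gamma(\pi\times\pi',\psi_R) \in R^{\times}$. Concretely, applying \autoref{cor:more_general_functional_eqn} with $j = n-m-1$ (so $k = 0$) gives
\[
I(W, W'; n-m-1) \cdot \gamma(\pi\times\pi',\psi_R) \;=\; I(w_{n,m}\widetilde{W}, \widetilde{W'}; 0),
\]
so if we can find $W, W'$ making the right-hand side equal to $1$, then $\gamma(\pi\times\pi',\psi_R) \cdot I(W, W'; n-m-1) = 1$, proving invertibility.

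To find such $W, W'$, I would reparametrize by setting $W^{\vee} := w_{n,m}\widetilde{W}$ and $W'^{\vee} := \widetilde{W'}$. Since the tilde operation is an $R$-linear involution between $\calw(V,\psi_R)$ and $\calw({}^\iota V, \psi_R^{-1})$ (and likewise for $V'$), and right-translation by $w_{n,m}$ is a $G_n$-automorphism of the target, the map $(W,W') \mapsto (W^{\vee}, W'^{\vee})$ is a bijection onto $\calw({}^\iota V, \psi_R^{-1}) \times \calw({}^\iota V', \psi_R)$. Thus it suffices to find $W^{\vee}, W'^{\vee}$ in these dual Whittaker models with $I(W^{\vee}, W'^{\vee}; 0) = 1$, which can be achieved by mimicking the surjectivity step at the end of the proof of \autoref{thm:bil_space_one_dimensional}: choose $W^{\vee}$ with $W^{\vee}|_{P_n}$ supported on $N_n$ and $W^{\vee}(I_n) = 1$ (via \autoref{lem:restriction_to_pn}), and $W'^{\vee}$ with $W'^{\vee}(I_m) = 1$. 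Then only the trivial coset $g = I_m$ contributes to the sum
\[
I(W^{\vee}, W'^{\vee}; 0) = \sum_{g \in N_m \backslash G_m} W^{\vee}\!\left(\begin{smallmatrix} g & 0 \\ 0 & I_{n-m}\end{smallmatrix}\right) W'^{\vee}(g),
\]
because for $g \notin N_m$ the block matrix fails to lie in $N_n$ and $W^{\vee}$ vanishes. The sole contributing term is $W^{\vee}(I_n) W'^{\vee}(I_m) = 1$.

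The main subtlety, and the one place where one must pay attention, is verifying that the ${}^\iota$-twisted representations ${}^\iota V$ and ${}^\iota V'$ are again of Whittaker type with respect to the dual characters, so that \autoref{lem:restriction_to_pn} applies. This should follow from an elementary change of variables $u = \rho(w_n)v$: the relation $w_n \cdot {}^\iota n \cdot w_n^{-1} \in N_n$ with $\psi_R$-value $\psi_R^{-1}(n)$ identifies the defining relations of $({}^\iota V)_{N_n, \psi_R^{-1}}$ with those of $V_{N_n, \psi_R}$, yielding an $R$-module isomorphism $({}^\iota V)^{(n, \psi_R^{-1})} \cong V^{(n, \psi_R)}$, which is free of rank one by hypothesis.
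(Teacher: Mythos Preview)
Your argument is correct and closely parallels the paper's, but is organized slightly differently. The paper applies the functional equation \emph{twice}---once for $\pi\times\pi'$ and once for ${}^\iota\pi\times{}^\iota\pi'$---to show
\[
\gamma(\pi\times\pi',\psi_R)\,\gamma({}^\iota\pi\times{}^\iota\pi',\psi_R^{-1})\,I(W,W';0)=I(W,W';0),
\]
and then invokes the existence of $W,W'$ with $I(W,W';0)=1$. Your approach applies \autoref{cor:more_general_functional_eqn} only once (with $j=n-m-1$) and constructs $W,W'$ making the right-hand side equal to $1$ directly, via the same Bessel-vector trick but transported to $\calw({}^\iota V,\psi_R^{-1})$. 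The paper's route yields the extra information that $\gamma({}^\iota\pi\times{}^\iota\pi',\psi_R^{-1})$ is the inverse, but to even define that second gamma factor one must verify that ${}^\iota V,{}^\iota V'$ are again of Whittaker type and that $({}^\iota V,{}^\iota V')$ is non-exceptional---checks the paper leaves implicit. Your version sidesteps the non-exceptionality check entirely and only needs ${}^\iota V$ to be of $\psi_R^{-1}$-Whittaker type, which you correctly verify via the conjugation-by-$w_n$ argument in your final paragraph.
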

\begin{proof}
One approach would be to prove that $I(w_{n,m}\wt{W}',\wt{W'};n-m-1)$ is also a generator of $\bil_{G_mU_{n,n-m-1}}(\calw, \calw',\one\otimes \psi_R)$, but we will instead use 
\autoref{cor:more_general_functional_eqn}. Since 
$w_{n,m}\widetilde{W}$ defines an element of $\mathcal{W}({^\iota V},\psi_R^{-1})$, the functional equation gives
\begin{align*}
    I(W,W';0)\gamma(\pi\times \pi',\psi_R)\gamma({^\iota \pi}\times {^\iota \pi'},\psi_R^{-1}) & = I(w_{n,m}\wt{W},\wt{W'};n-m-1)\gamma({^\iota \pi}\times {^{\iota}\pi'},\psi_R^{-1})\\
    & = I(w_{n,m}\widetilde{w_{n,m}\widetilde{W}},\wt{\wt{W'}};0)\\
    &= I(W, W';0)\ .
\end{align*}
Thus it's enough to show the existence of $W$ and $W'$ such that $I(W,W';0) = 1$, which is done in the proof of \autoref{thm:bil_space_one_dimensional}.
\end{proof}

\subsection{Gamma factor and functional equation when \texorpdfstring{$n = m$}{n=m}}

Now we address the case when $n = m$.

Let $C(\F_q^n,R)$ denote the set of all functions $\Phi: \F_q^n \to R$. Since $G_n$ naturally acts (on the right) on $\F_q^n$, the set $C(\F_q^n,R)$ acquires an $R$-linear left $G_n$-action by setting
    \[ (g \cdot f)(x) = f(x \cdot g). \]
The $R$-subspace
    \[ C_0(\F_q^n,R) = \set{f \in C(\F_q^n,R) : f(0,\dots,0) = 0} \]
is $G_n$-stable.

In order to formulate a functional equation, we define trilinear forms instead of bilinear forms to take into account the functions in $C(\F_q^n,R)$. If $W, W': G_n \to R$ are two functions and $\Phi \in C(\F_q^n, R)$ then let
    \[ I(W,W',\Phi) := \sum_{g \in N_n \backslash G_n} W(g) W'(g) \Phi(\eta g) \]
where $\eta = \begin{pmatrix}0 & \cdots & 0 & 1\end{pmatrix}$. For $\Phi \in C(\F_q^n, R)$ let $\wh\Phi \in C(\F_q^n,R)$ denote the Fourier transform
    \[ \wh\Phi(a) = \sum_{x \in \F_q^n} \Phi(x)\psi_R(\langle a, x \rangle) , \]
where $\langle - , - \rangle$ denotes the standard inner product on $\F_q^n$, given for $a = (a_1, \ldots, a_n)$ and $x = (x_1, \ldots, x_n)$ by $\langle a , x \rangle = a_1 x_1 + \cdots + a_n x_n$.

The maps
\begin{align*}
    (W,W',\Phi) &\mapsto I(W,W',\Phi) \\
    (W,W',\Phi) &\mapsto I(\wt{W},\wt{W'},\wh{\Phi})
\end{align*}
define elements of
    \[ \tril_{G_n}(\calw, \calw', C(\F_q^n,R)). \]

\begin{theorem}\label{thm:tril_space_one_dimensional}
If $(V,V')$ is not an exceptional pair then
    \[ \tril_{G_n}(\calw, \calw',C(\F_q^n,R)) \]
is a free $R$-module of rank $1$ generated by $I(W,W',\Phi)$. 
\end{theorem}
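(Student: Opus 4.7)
The plan is to mirror the proof of \autoref{thm:bil_space_one_dimensional}: reduce the trilinear space to a space of bilinear forms on $P_n$-restrictions, use the Bernstein--Zelevinsky filtration together with the non-exceptional hypothesis to show this bilinear space injects into $R$, and then apply \autoref{Orzech} after exhibiting an explicit triple with $I(W,W',\Phi)=1$. The main obstacle will be the bookkeeping in the BZ step: the non-exceptional hypothesis must be precisely calibrated so that the parameter $t=n$ kills the ``constant'' contribution in the reduction, while $t=1,\dots,n-1$ kill all intermediate diagonal pairings, leaving only a single rank-one contribution at the bottom.

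\textbf{Reduction.} The short exact sequence of $G_n$-modules
\[
0 \to C_0(\F_q^n,R) \to C(\F_q^n,R) \xrightarrow{\ev_0} R \to 0
\]
(where the quotient is the trivial representation, since $0\cdot g=0$) is left-exact under $\tril_{G_n}(\calw,\calw',-)$. The resulting cokernel term $\tril_{G_n}(\calw,\calw',R) = \bil_{G_n}(\calw,\calw',\mathbf{1})$ vanishes by the non-exceptional hypothesis at $t=n$. Since $G_n$ acts transitively on $\F_q^n\setminus\{0\}$ with stabilizer of $\eta=(0,\dots,0,1)$ equal to $P_n$, one has $C_0(\F_q^n,R)\cong \Ind_{P_n}^{G_n}\mathbf{1}$; combining with the projection formula and Frobenius reciprocity yields a canonical isomorphism
\[
\tril_{G_n}(\calw,\calw',C(\F_q^n,R)) \;\cong\; \bil_{P_n}(\calw|_{P_n},\calw'|_{P_n},\mathbf{1}).
\]

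\textbf{BZ analysis.} Following the strategy of \autoref{lemma bottom step 1} and \autoref{concluding_lemma}, I would filter both $\calw|_{P_n}$ and $\calw'|_{P_n}$ via \eqref{bzfiltration}, with successive quotients $(\Phi^+)^{k-1}\Psi^+(\calw^{(k)})$ and $(\Phi^+)^{k'-1}\Psi^+((\calw')^{(k')})$ for $k,k'\in\{1,\dots,n\}$, and then restrict bilinear forms to deeper steps one factor at a time. For cross terms ($k\neq k'$, WLOG $k<k'$), after cancelling $(\Phi^+)^{k-1}$ via \eqref{eq BZ bil cancel Phi} one obtains a form in $\bil_{P_{n-k+1}}(\Psi^+(\calw^{(k)}),(\Phi^+)^{k'-k}\Psi^+((\calw')^{(k')}),\mathbf{1})$, which vanishes by \eqref{eq BZ bil is zero}. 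For diagonal terms with $k=k'<n$, combining \eqref{eq BZ bil cancel Psi} and \eqref{eq BZ bil cancel Phi} identifies the space with $\bil_{G_{n-k}}(\calw^{(k)},(\calw')^{(k)},\mathbf{1})$, which vanishes by the non-exceptional hypothesis with parameter $t=n-k\in\{1,\dots,n-1\}$. The bottom-bottom pair ($k=k'=n$) reduces to $\bil_{G_0}(\mathbf{1},\mathbf{1},\mathbf{1})=R$ since $\calw^{(n)}=(\calw')^{(n)}=\mathbf{1}$, producing an injection $\bil_{P_n}(\calw|_{P_n},\calw'|_{P_n},\mathbf{1})\hookrightarrow R$.

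\textbf{Generation.} To conclude via \autoref{Orzech} it suffices to exhibit a triple with $I(W,W',\Phi)=1$. Using \autoref{lem:restriction_to_pn}, pick $W\in\calw$ with $W|_{P_n}$ supported on $N_n$ and $W(1)=1$; similarly $W'\in\calw'$ with $W'(1)=1$; and let $\Phi$ be the indicator function of $\eta$. Since $P_n$ is the stabilizer of $\eta$, the factor $\Phi(\eta g)$ vanishes unless $g\in P_n$; since $W|_{P_n}$ is supported on $N_n$, only the trivial coset in $N_n\backslash G_n$ contributes, giving $I(W,W',\Phi)=W(1)W'(1)\Phi(\eta)=1$. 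Evaluating at this triple furnishes a surjection from the trilinear space onto $R$, and \autoref{Orzech} upgrades the injection above to an isomorphism, exhibiting $I(W,W',\Phi)$ as a generator.
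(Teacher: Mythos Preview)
Your proposal is correct and follows essentially the same route as the paper's proof: reduce via the short exact sequence for $C(\F_q^n,R)$, identify $C_0(\F_q^n,R)\cong\Ind_{P_n}^{G_n}\mathbf{1}$ to pass to $\bil_{P_n}(\calw,\calw',\mathbf{1})$, run the Bernstein--Zelevinsky filtration on both factors to inject into $R$, and conclude with \autoref{Orzech} using the triple $(W,W',\delta_\eta)$. One small slip: the functor $X\mapsto\tril_{G_n}(\calw,\calw',X)=\Hom_{G_n}(\calw\otimes\calw'\otimes X,\mathbf{1})$ is \emph{contravariant}, so $\bil_{G_n}(\calw,\calw',\mathbf{1})$ arises as the kernel (not cokernel) of the induced map, and its vanishing only gives an \emph{injection} $\tril_{G_n}(\calw,\calw',C)\hookrightarrow\tril_{G_n}(\calw,\calw',C_0)\cong\bil_{P_n}(\calw,\calw',\mathbf{1})$, not an isomorphism---but an injection is all that is needed, and the rest of your argument is unaffected.
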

\begin{proof}
We closely follow \cite[Proposition 3.7]{kurinczuk-matringe}. The $G_n$-equivariant exact sequence of $R$-modules
    \[ 0 \to C_0(\F_q^n,R) \to C(\F_q^n,R) \to \one \to 0 \]
consists entirely of free finite rank $R$-modules and thus splits, so
    \[ 0 \to \calw \otimes_R \calw' \otimes_R C_0(\F_q^n,R) \to \calw \otimes_R \calw' \otimes_R C(\F_q^n,R) \to \calw \otimes_R \calw' \to 0 \]
is still a $G_n$-equivariant exact sequence. Since $(V,V')$ is not an exceptional pair we see that $\bil_{G_n}(\calw, \calw', \one) = 0$. So in view of the above sequence and the left-exactness of the Hom functor, we see that $\tril_{G_n}(\calw, \calw', C(\F_q^n,R))$ injects into $\tril_{G_n}(\calw, \calw', C_0(\F_q^n, R))$.

Note that $C_0(\F_q^n,R)$ is isomorphic as a $G_n$-representation to $\Ind_{P_n}^{G_n} \one$ because the orbit of the vector $\eta = (0,\dots,0,1)$ under the standard right action of $G_n$ on $\F_q^n$ is $\F_q^n - \set{(0,\dots,0)}$ and the stabilizer is $P_n$. Again, using that
induction commutes with taking duals,
\begin{align*}
    \tril_{G_n}(\calw, \calw', C_0(\F_q^n,R)) &= \Hom_{R[G_n]}(\calw \otimes_R \calw' \otimes_R \Ind_{P_n}^{G_n} \one, \one) \\
    &= \Hom_{R[G_n]}(\calw \otimes_R \calw', (\Ind_{P_n}^{G_n} \one)^\vee) \\
    &= \Hom_{R[G_n]}(\calw \otimes_R \calw', \Ind_{P_n}^{G_n} \one) \\
    &= \bil_{P_n}(\calw, \calw', \one).
\end{align*}

Recall from above in \autoref{bzfiltration} that $\calw$ admits a filtration of length $n$ by $P_n$-subrepresentations with successive quotients isomorphic to $(\Phi^+)^{k-1}\Psi^+(\calw^{(k)})$ for $k = 1,\dots,n$, and the same is true for $\calw'$. But in view of \autoref{bil spaces and bz functors}
    \[ \bil_{P_n}((\Phi^+)^{k-1}\Psi^+(\calw^{(k)}), (\Phi^+)^{j-1}\Psi^+((\calw')^{(j)}), \one) \]
is zero unless $k = j$, in which case it's equal to
    \[ \bil_{P_n}((\Phi^+)^{k-1}\Psi^+(\calw^{(k)}), (\Phi^+)^{k-1}\Psi^+((\calw')^{(k)}), \one) = \bil_{G_{n-k}}(\calw^{(k)}, (\calw')^{(k)}, \one). \]
But $(V,V')$ is not an exceptional pair, so this vanishes for $k = 1, \dots, n-1$. The only surviving piece, then, is when $k = j = n$ and so using \autoref{bil spaces and bz functors} we see that there is an injection
\begin{align*}
    \bil_{P_n}(\calw, \calw', \one) &\hookrightarrow \bil_{P_n}((\Phi^+)^{n-1}\Psi^+(\one), (\Phi^+)^{n-1}\Psi^+(\one),\one) \\
    &= \bil_R(\one,\one,\one) \\
    &= R
\end{align*}
We have therefore found an $R$-module injection
    \[ \tril_{G_n}(\calw, \calw', C(\F_q^n, R)) \hookrightarrow R \]
By \autoref{Orzech} it suffices to find $W \in \calw$ and $W' \in \calw'$ such that $I(W,W',\delta_\eta) = 1$, where $\delta_\eta (x)$ equals $1$ if $x = \eta$ and equals $0$ otherwise. This is because then the evaluation map
    \[ \ev_{W,W',\delta_\eta}: \tril_{G_n}(\calw,\calw',C(\F_q^n,R)) \twoheadrightarrow R \]
sends $I(W,W',\Phi)$ to a unit and is therefore surjective.

As in the proof of \autoref{thm:bil_space_one_dimensional} we can pick Whittaker functions $W \in \calw$ and $W' \in \calw'$ such that $W(1) = 1$, the restriction $W|_{P_n}$ is supported on $N_n$, and $W'(1) = 1$. Then $I(W,W',\delta_\eta) = 1$.
\end{proof}

\begin{corollary}
There exists a unique element $\gamma(\pi\times\pi', \psi_R)$ of $R^{\times}$ such that
    \[ I(W,W',\Phi)\gamma(\pi \times \pi', \psi_R) = I(\wt{W},\wt{W'},\wh{\Phi}) \]
for all $W \in \calw$ and $W' \in \calw'$.
\end{corollary}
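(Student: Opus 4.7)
The plan is to mirror the strategy of \autoref{cor:functional_equation} and \autoref{cor:gamma_invertible}: deduce the existence and uniqueness of $\gamma(\pi \times \pi', \psi_R) \in R$ directly from \autoref{thm:tril_space_one_dimensional}, and then establish invertibility by iterating the functional equation to compute $\gamma(\pi \times \pi',\psi_R)\,\gamma({^\iota\pi} \times {^\iota\pi'},\psi_R^{-1})$ explicitly.

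For existence and uniqueness, \autoref{thm:tril_space_one_dimensional} asserts that $\tril_{G_n}(\calw, \calw', C(\F_q^n, R))$ is free of rank one over $R$, with generator $I(W, W', \Phi)$. The paragraph immediately preceding that theorem observes that $(W, W', \Phi) \mapsto I(\wt{W}, \wt{W'}, \wh{\Phi})$ also defines an element of this trilinear form space. Hence there is a unique $\gamma \in R$ such that $I(\wt{W}, \wt{W'}, \wh{\Phi}) = \gamma \cdot I(W, W', \Phi)$ for all triples $(W, W', \Phi)$.

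To show $\gamma \in R^{\times}$, I apply the functional equation twice. A direct computation gives $\wt{\wt{W}} = W$ (because $w_n^2 = I$ and ${^\iota w_n} = w_n$, so that $w_n \cdot {^\iota w_n} = I$) and, by standard Fourier inversion on $\F_q^n$ (which is valid since $q \in R^{\times}$), $\wh{\wh{\Phi}}(a) = q^n \Phi(-a)$. Substituting $g \mapsto (-I)g$ in the sum defining $I(W, W', \wh{\wh{\Phi}})$, and using that a Whittaker-type representation has a central character $\omega : Z(G_n) \to R^{\times}$ (since $Z(G_n)$ commutes with $N_n$ and preserves $\psi_R$, it acts by a character on the rank-one coinvariant module $V^{(n)}$), one obtains
\[ I(W, W', \wh{\wh{\Phi}}) = q^n \,\omega_\pi(-I)\, \omega_{\pi'}(-I)\, I(W, W', \Phi). \]
Applying the functional equation both for $(\pi, \pi')$ and for $({^\iota\pi}, {^\iota\pi'})$ (whose Whittaker models contain $\wt{W}$ and $\wt{W'}$ respectively) then yields
\[ \gamma(\pi \times \pi', \psi_R)\, \gamma({^\iota\pi} \times {^\iota\pi'}, \psi_R^{-1})\, I(W, W', \Phi) = q^n\, \omega_\pi(-I)\, \omega_{\pi'}(-I)\, I(W, W', \Phi). \]
Choosing $W, W'$ with $I(W, W', \delta_\eta) = 1$ (as constructed at the end of the proof of \autoref{thm:tril_space_one_dimensional}) concludes the argument, since the right-hand side is manifestly a unit of $R$. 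The only mildly delicate step is the bookkeeping for the scalar $q^n \omega_\pi(-I)\omega_{\pi'}(-I)$ coming from the double iteration, but as $q \in R^{\times}$ and central characters of Whittaker-type representations take values in $R^{\times}$, I do not expect this to pose a real obstacle.
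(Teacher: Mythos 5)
Your proposal is correct and follows essentially the same route as the paper: existence and uniqueness from \autoref{thm:tril_space_one_dimensional}, then invertibility by applying the functional equation twice and evaluating at a triple with $I(W,W',\Phi)=1$. The only difference is bookkeeping — the paper simply asserts $I(\wt{\wt{W}},\wt{\wt{W'}},\wh{\wh{\Phi}})=I(W,W',\Phi)$, whereas you track the resulting unit constant explicitly (note that in the second application the Fourier transform is taken with respect to $\psi_R^{-1}$, so $\wh{\wh{\Phi}}=q^n\Phi$ and the central-character substitution $g\mapsto(-I)g$ is not actually needed); either way the constant is a unit and the argument goes through.
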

\begin{proof}
\autoref{thm:tril_space_one_dimensional} shows that there exists such a $\gamma(\pi\times\pi',\psi_R) \in R$, so we need to show that it is a unit.

As in \autoref{cor:gamma_invertible}, we have
\begin{align*}
    I(W,W',\Phi)\gamma(\pi \times \pi',\psi_R)\gamma({^\iota}\pi \times {^\iota}\pi', \psi_R^{-1}) &= I(\wt{W},\wt{W'},\wh\Phi)\gamma({^\iota}\pi \times {^\iota}\pi', \psi_R^{-1}) \\
    &= I(\wt{\wt{W}},\wt{\wt{W'}},\wh{\wh{\Phi}}) \\
    &= I(W,W',\Phi)
\end{align*}
and the proof of \autoref{thm:tril_space_one_dimensional} gives us $W,W',\Phi$ such that $I(W,W',\Phi) = 1$.
\end{proof}

\section{Converse theorem}

Let $k = \overline{\mathbb{F}}_{\ell}$. In this section we prove a converse theorem for irreducible cuspidal $k$-representions, in which gamma factors take values in Artinian $k$-algebras.
\subsection{Projective envelopes} \label{subsec: projective envelopes}
Recall that $N_n$ denotes the subgroup of unipotent upper triangular matrices. Since the order of $N_n$ is relatively prime to $\ell$, the character~$\psi_k: N_n \to k^{\times}$ is a projective $k[N_n]$-module. Since $\Ind_{N_n}^{G_n}$ is left-adjoint to an exact functor, it takes projective objects to projective objects and therefore $\Ind_{N_n}^{G_n} \psi_k$ is a projective $k[G_n]$-module. We can then decompose $\Ind_{N_n}^{G_n}\psi_k$ as a direct sum
    \[ \Ind_{N_n}^{G_n} \psi_k = P_1^{\oplus e_1}\oplus \dots \oplus P_r^{\oplus e_r}, \]
where each $P_i$ is indecomposable and projective, and $P_i\not\cong P_j$ for $i\neq j$. However, we know that $\End_{G_n}(\Ind_{N_n}^{G_n} \psi_k)$ is a commutative ring (see \cites[Theorem 49]{steinberg}[Corollary 4.8]{zhang}{allen-wake-zhang}), so $e_i=1$ for all $i$. The commutativity of $\End_{G_n}(\Ind_{N_n}^{G_n} \psi_k)$ also implies that $\Hom_{G_n}(P_i,P_j) = 0$ when $i\neq j$.

There is a bijection \cite[I.A.7]{vig_book} between isomorphism classes of irreducible representations of $G_n$ and isomorphism classes of indecomposable projective $k[G_n]$-modules:

\begin{align*}
\{\text{irreducible $k[G_n]$-modules}\} &\leftrightarrow \{\text{indecomposable projective $k[G_n]$-modules}\}\\
\pi&\mapsto P(\pi)\\
\soc(P) &\mapsfrom P,
\end{align*}
where $P(\pi)$ denotes the projective envelope of $\pi$ and $\soc(P)$ denotes the socle (i.e. the largest semisimple subrepresentation) of $P$. Note also that, by duality, $\pi$ also occurs as a quotient of $P(\pi)$ and is in fact the only irreducible quotient of $P(\pi)$ \cite[Chapter~14]{SerreLinear}. In other words, $\pi$ is not only the socle of $P(\pi)$ but also its cosocle (i.e. the largest semisimple quotient).

Since $P_i$ is not isomorphic to $P_j$ for $i\neq j$, the bijectivity above implies that $\soc(P_i)$ is not isomorphic to $\soc(P_j)$. On the other hand, being contained in $\Ind_{N_n}^{G_n} \psi_k$, each $\soc(P_i)$ is irreducible and generic, and every irreducible generic representation must occur as a (the) submodule of some $P_i$. Thus in restricting to generic objects we have a bijection of isomorphism classes:

\begin{align*}
\{\text{irreducible \emph{generic} $k[G_n]$-modules}\} &\leftrightarrow \{P_1,P_2,\dots,P_r\}\\
\pi&\mapsto P(\pi)\\
\soc(P) &\mapsfrom P.
\end{align*}

Let $P := P(\pi)$ for an irreducible generic representation $\pi$. Note that $P$ is finite-dimensional as a $k$-vector space and that it has finite length as a $k[G_n]$-module. Since it is moreover indecomposable, we conclude that $R(\pi):=\End_{k[G_n]}(P)$ is local by Fitting's lemma. The ring $R(\pi)$ is a finite-dimensional commutative $k$-algebra because it is contained in $\End_{k[G_n]}(\Ind_{N_n}^{G_n} \psi)$.

\subsection{Duality and derivative of \texorpdfstring{$P(\pi)$}{P(pi)}}
Recall that we let $^\iota g := {^t g^{-1}}$ and that for any representation $(\pi,V)$ of $G_n$, we let $({^\iota \pi}, V)$ denote the representation ${^\iota \pi}(g)v := \pi({^\iota g})v$. 

\begin{lemma}
If $\pi: G_n \to \mathrm{Aut}(V)$ is an irreducible representation, one has ${^\iota \pi}\cong\pi^{\vee}$ where $\pi^{\vee}$ denotes the dual to $\pi$. 
\end{lemma}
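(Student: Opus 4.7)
The plan is to compare Brauer characters and appeal to the modular analogue of the character isomorphism criterion. Since $k = \flb$ with $\ell \neq p$, two irreducible $k[G_n]$-modules are isomorphic if and only if their Brauer characters agree on the $\ell$-regular elements of $G_n$, so it suffices to check $\varphi_{{^\iota\pi}} = \varphi_{\pi^{\vee}}$ there.

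First, I would unwind the two Brauer characters pointwise. Because ${^\iota\pi}$ acts on the same underlying vector space as $\pi$ by ${^\iota\pi}(g) = \pi({^\iota g}) = \pi({^t g^{-1}})$, its Brauer character at an $\ell$-regular $g$ is literally $\varphi_{{^\iota\pi}}(g) = \varphi_{\pi}({^t g^{-1}})$. On the other hand, $\varphi_{\pi^{\vee}}(g)$ is the characteristic-zero lift of $\Tr \pi^{\vee}(g)$, which equals $\Tr \pi(g^{-1})$ since passing to the transpose matrix preserves the trace; hence $\varphi_{\pi^{\vee}}(g) = \varphi_{\pi}(g^{-1})$. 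So the problem collapses to showing $\varphi_{\pi}(g^{-1}) = \varphi_{\pi}({^t g^{-1}})$ for every $\ell$-regular $g$.

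Since $\varphi_{\pi}$ is a class function on $\ell$-regular elements, this reduces in turn to the classical fact that every matrix $h \in G_n$ is $G_n$-conjugate to its transpose $h^t$. I would cite this via the theory of rational canonical forms over $\F_q$: $h$ and $h^t$ share the same invariant factors, so they represent the same linear operator up to change of basis and are therefore similar inside $G_n$. Applying this to $h = g^{-1}$ gives the desired equality of Brauer character values.

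The main --- and rather mild --- obstacle is the last step, namely ensuring that the conjugacy of $h$ and $h^t$ can be realized inside $G_n$ itself rather than merely over an algebraic closure; this is standard via rational canonical form. If one preferred to bypass Brauer characters altogether, an alternative approach would be a Gelfand--Kazhdan-style argument: the anti-involution $g \mapsto {^t g}$ preserves every conjugacy class of $G_n$, and one can show directly that for any irreducible $\pi$ the twist $\pi \circ ({^t (-)^{-1}})$ is the unique (up to scalar) irreducible admitting a nonzero $G_n$-invariant pairing with $\pi$, which is $\pi^{\vee}$.
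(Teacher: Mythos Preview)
Your argument is correct and is essentially the same as the paper's: both compare Brauer characters, reduce to $\varphi_\pi(g^{-1}) = \varphi_\pi({^t g^{-1}})$, and conclude via the fact that every matrix in $G_n$ is conjugate to its transpose. The paper is just terser, writing the chain $\tr\pi^{\vee}(g) = \tr {^t}\pi(g^{-1}) = \tr \pi(g^{-1}) = \tr \pi({^\iota}g) = \tr{^\iota}\pi(g)$ in one line; your extra care about rational canonical form over $\F_q$ and the alternative Gelfand--Kazhdan remark are fine but not needed.
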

\begin{proof}
   Following \cite[Chapter 18]{SerreLinear}, it suffices to show that $\pi^\vee$ and ${^\iota}\pi$ have the same Brauer character. Let $g \in G_n$ have order coprime to $\ell$.  Then \[ \tr\pi^{\vee}(g) = \tr {^t}\pi(g^{-1}) = \tr \pi(g^{-1}) = \tr \pi({^\iota}g) = \tr{^\iota}\pi(g) \] since every matrix in $G_n$ is conjugate to its transpose. 
\end{proof}

Now let us consider the representation $^{\iota}(\Ind_{N_n}^{G_n}\psi_k)$. Recall from
\autoref{whittakermodels_subsection} that $w_n$ denotes the antidiagonal matrix with $1$'s along the antidiagonal, and note that for $u \in N_n$, $\psi_k(w_n({^\iota u}){w_n^{-1}}) = \psi_k^{-1}(u)$. Also recall that for $W:G_n\to k$ an element of $\Ind_{N_n}^{G_n}\psi_k$, we let $\widetilde{W}$ be  the function defined by
$$\widetilde{W}(g) = W(w_n ({^\iota g})).$$ This function defines an element of $\Ind_{N_n}^{G_n}\psi_k^{-1}$ since for $u \in N_n$ we have 
\[ \widetilde{W}(ug) = W(w_n({^\iota u})({^\iota g})) = \psi_k^{-1}(u)W(w_n({^\iota g)}) = \psi_k^{-1}(u)\widetilde{W}(g).\]

For $h\in G_n$, the map
\begin{align*}
\Ind_{N_n}^{G_n}\psi_k &\to \Ind_{N_n}^{G_n}\psi_k^{-1}\\
W&\mapsto \widetilde{W}
\end{align*}
satisfies $\widetilde{(hW)} = {^\iota h}\widetilde{W}$, so the map is a $G_n$-equivariant isomorphism when the target is equipped with the $G_n$-action obtained by composing the right-translation action with the involution $g\mapsto {^\iota g}$.

Recall the notation from \autoref{ss derivative functors}: for $R$ a $k$-algebra and $P$ an $R[G_n]$-module:
$$P^{(n)} = P_{N_n,\psi_R} := P/P(N_n,\psi_R),$$ where $P(N_n,\psi_R)$ is the $R$-module generated by $uv-\psi_R(u)v$, $u\in N_n$, $v\in P$. Thus $P^{(n)}$ is the $(N_n,\psi_R)$-coinvariants, i.e. the largest quotient in the category of $R[N_n]$-modules on which $N_n$ acts via the character $\psi_R$.

Note that $P(N_n,\psi_k)$ is equal to the \emph{$k$-vector space} generated by the set $\{uv-\psi_k(u)v:u\in N_n, v\in P\}$, so $P^{(n)}$ is also the largest quotient on which $N_n$ acts via $\psi_k$ in the category of $k[N_n]$-modules. 

\begin{proposition}
\label{prop:Pderivative}
Let $\pi$ be an irreducible generic $k[G_n]$-module and let $P = P(\pi)$ be its corresponding indecomposable projective module, considered as a module over the ring $R =R(\pi)= \End_{k[G_n]}(P)$. Then $P^{(n)}$
is free of rank one as an $R$-module.
\end{proposition}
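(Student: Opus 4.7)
The plan is to construct a canonical $R$-linear isomorphism $P^{(n)} \cong R$, with $R$ viewed as a free rank-one module over itself by left multiplication. The equality $P^{(n,\psi_k)} = P^{(n,\psi_R)}$ follows immediately from the remarks preceding the proposition: elements of $R = \End_{k[G_n]}(P)$ commute with the $N_n$-action on $P$, so for any $r \in R$ one has $r(uv - \psi_k(u)v) = u(rv) - \psi_k(u)(rv)$, and consequently the $R$-submodule and the $k$-subspace of $P$ generated by $\{uv - \psi(u)v : u \in N_n,\, v \in P\}$ coincide.

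For the freeness, I would build the isomorphism by stringing together standard adjunctions. Since $|N_n|$ is invertible in $k$, the projector of \eqref{coinvariants_projector} identifies the coinvariant space $P^{(n)} = P_{N_n,\psi_k}$ with the invariant subspace $P^{N_n,\psi_k}$, which in turn equals $\Hom_{k[N_n]}(\psi_k, P)$ via $v \mapsto (1 \mapsto v)$. Frobenius reciprocity then identifies this Hom space with $\Hom_{k[G_n]}(\Ind_{N_n}^{G_n}\psi_k, P)$. Finally, using the decomposition $\Ind_{N_n}^{G_n}\psi_k = P \oplus \bigoplus_{j \ne i_0} P_j$ recalled in \autoref{subsec: projective envelopes}, together with the vanishing $\Hom_{k[G_n]}(P_j, P) = 0$ for $j \ne i_0$ (a consequence of the commutativity of $\End_{k[G_n]}(\Ind_{N_n}^{G_n}\psi_k)$), this Hom space collapses to $\End_{k[G_n]}(P) = R$.

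The remaining work is to check that each link in this chain is equivariant for the $R$-action, where $R$ acts on $P^{(n)}$ through its action on $P$ and on $R = \End_{k[G_n]}(P)$ by left (post-)composition. The projector commutes with $R$ precisely because $R$ commutes with the $N_n$-action; the passage $v \mapsto (1 \mapsto v)$ is visibly $R$-equivariant; Frobenius reciprocity is natural in the target $P$; and the final identification matches the $R$-action by post-composition with left multiplication on $R$. Composing yields the desired isomorphism $P^{(n)} \cong R$ of $R$-modules. This equivariance bookkeeping is the only real obstacle, and it is purely formal --- all of the substantive representation-theoretic input (the multiplicity-one appearance of $P$ in $\Ind_{N_n}^{G_n}\psi_k$, and the vanishing of $\Hom$ between distinct indecomposable summands) has already been established in \autoref{subsec: projective envelopes}.
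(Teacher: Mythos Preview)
Your proof is correct and follows essentially the same route as the paper: identify $P^{(n)}$ with $P^{N_n,\psi_k} \cong \Hom_{k[N_n]}(\psi_k,P) \cong \Hom_{k[G_n]}(\Ind_{N_n}^{G_n}\psi_k,P) \cong \End_{k[G_n]}(P)=R$ via the projector, Frobenius reciprocity, and multiplicity-freeness, then observe that each isomorphism is $R$-linear. Your treatment is in fact slightly more explicit about the $R$-equivariance bookkeeping and about the equality $P^{(n,\psi_k)}=P^{(n,\psi_R)}$.
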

\begin{proof}
By \autoref{section_representations}, $P^{(n)}$ is canonically isomorphic to the $k$-subspace $P^{N_n,\psi_k}$ consisting of elements on which $N_n$ acts via $\psi_k$.
Thus we get the following string of $k$-isomorphisms:
\begin{align*}
P^{(n)} &\cong \Hom_{k[N_n]}(\psi_k,P)\\
&\cong \Hom_{k[G_n]}(\Ind_{N_n}^{G_n}\psi_k,P)\\
&\cong \End_{k[G_n]}(P) = R(\pi)
\end{align*}
The first isomorphism is the defining property of $(N_n,\psi_k)$-invariants, and takes an element $v\in P^{N_n,\psi_k}$ to the map $\psi_k \to P$ defined by sending $1$ to $v$. The second isomorphism is Frobenius reciprocity (\autoref{section_representations}) which takes a homomorphism $\phi:\psi_k\to P$ to a homomorphism $\Phi:\Ind_{N_n}^{G_n} \psi_k\to P$ defined by $\Phi(f) = \phi(f(1))=f(1)\phi(1)$ for $f\in \Ind_{N_n}^{G_n} \psi_k$. The third isomorphism follows from multiplicity-freeness of $\Ind_{N_n}^{G_n} \psi_k$, which implies any homomorphism $\Ind_{N_n}^{G_n} \psi_k\to P$ is zero on the summands distinct from $P$.

The ring $R=R(\pi)$ is, by definition, the $k[G_n]$-linear endomorphism ring of $P$, so acts naturally on $P^{N_n,\psi_k}$. In addition, $R$ acts on each of the above $\Hom$ spaces by acting on the target. We check that the composite of the first two isomorphisms above is $R$-linear; the others are clear. Given $r\in R$, let $v\in P^{N_n,\psi_k}$, and take $\Phi\in \Hom_{k[G_n]}(\Ind_{N_n}^{G_n} \psi_k,P)$ such that $v$ is sent to $\Phi$ in the setting of the previous paragraph. We have that $r(v)$ maps to the homomorphism 
$$f\mapsto f(1)r(v) = r(f(1)v) = r(\Phi(f)),$$ because $r$ is $k$-linear.
\end{proof}

\subsection{Whittaker model of \texorpdfstring{$P(\pi)$}{P(pi)}}\label{subsection_whittaker_model}

We first note that $P=P(\pi)$ is of $\psi$-Whittaker type: from \autoref{prop:Pderivative}, $P^{(n)}$ is an $R(\pi)$-module of rank $1$, so that \[ \Hom_{R(\pi)}(P^{(n)}, R(\pi)) =  R(\pi).\]

This allows us to consider the Whittaker model of $P(\pi)$ in $\Ind_{N_n}^{G_n}\psi_{R(\pi)}$, which entails choosing an element $\eta\in \Hom_{R(\pi)}(P^{(n)}, R(\pi))$ corresponding to a unit in $R(\pi)$. If we identify $P(\pi)^{(n)}\cong R(\pi)$ under the isomorphism from \autoref{prop:Pderivative}, and thus identify
    \[ \Hom_{R(\pi)}(P^{(n)}, R(\pi))\cong \Hom_{R(\pi)}(R(\pi),R(\pi)) = R(\pi), \]
we might as well choose $\eta$ corresponding to the identity under this identification. The Whittaker model $\mathcal{W}(P(\pi),\psi_{R(\pi)})$ is then, by definition, the image of the map
\begin{align*}
P(\pi)&\to \Ind_{N_n}^{G_n}\psi_{R(\pi)}\\
f&\mapsto W_f\ 
\end{align*}
defined as follows. If $\lambda: P(\pi) \to P(\pi)^{(n)}\cong R(\pi)$ denotes the natural quotient map, Frobenius reciprocity gives the formula $$W_f(g):= \lambda(g f),\ \ g\in G_n.$$

Next, we will compute a natural section of the above map from $P(\pi)$ to its Whittaker model. There is a canonical map of $k[N_n]$-modules
$$\Ind_{N_n}^{G_n}\psi_k\to \psi_k$$ given by evaluation at the identity.
For each irreducible generic representation $\pi$, we can restrict this to a map $P(\pi)\to \psi_k$, which must factor through the $(N_n,\psi_k)$-coinvariants to give a map
    \[ \theta:R(\pi)\cong P(\pi)^{(n)} \to k \]
of $k$-vector spaces. In other words, for $f$ an element of $\Ind_{N_n}^{G_n}\psi_k$ that lives in $P(\pi)$, we have $\theta(\lambda(f)) = f(1)$. 

Let $W_f \in \mathcal{W}(P(\pi),\psi_{R(\pi)})$ be the $R(\pi)$-valued Whittaker function of $f$. We have
$$(\theta\circ W_f)(g) = \theta(\lambda(g f)) = (g f)(1) = f(g),\ \ \ g\in G_n.$$ Thus a section of our chosen map
$$P(\pi)\to \Ind_{N_n}^{G_n}\psi_{R(\pi)}$$ is given by composing with $\theta$. We record these observations in the following corollary.

\begin{corollary}
   The representation $P = P(\pi)$ is of $\psi$-Whittaker type and essentially $\psi$-generic, i.e., embeds in its Whittaker model.
\end{corollary}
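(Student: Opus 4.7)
My plan is to show this corollary essentially as a bookkeeping consequence of \autoref{prop:Pderivative} together with the computation carried out in \autoref{subsection_whittaker_model}, which already assembles every ingredient needed.

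For the Whittaker type assertion, I would simply appeal to \autoref{prop:Pderivative}, which gives $P(\pi)^{(n)} \cong R(\pi)$ as $R(\pi)$-modules; since this is free of rank one, $P(\pi)$ satisfies \autoref{WhittakerType} over its endomorphism ring $R(\pi)$. Dualizing, one obtains $\Hom_{R(\pi)}(P(\pi)^{(n)}, R(\pi)) \cong R(\pi)$, and the element of this $\Hom$-space corresponding to the identity endomorphism provides the canonical generator $\eta$ used to define the Whittaker model $\calw(P(\pi),\psi_{R(\pi)})$ as the image of the map $f \mapsto W_f$, where $W_f(g) = \lambda(gf)$ and $\lambda\colon P(\pi) \to P(\pi)^{(n)} \cong R(\pi)$ is the natural projection.

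For essential genericity, the task is to show that $f \mapsto W_f$ is injective. The calculation in \autoref{subsection_whittaker_model} already exhibits the required retraction: since $P(\pi)$ sits as a direct summand of $\Ind_{N_n}^{G_n}\psi_k$, every element $f \in P(\pi)$ is simultaneously a function $G_n \to k$, and the map $\theta\colon R(\pi) \to k$ defined by $\theta(\lambda(f)) = f(1)$ satisfies
\[ (\theta \circ W_f)(g) \;=\; \theta(\lambda(gf)) \;=\; (gf)(1) \;=\; f(g) \]
for all $g \in G_n$. Thus applying $\theta$ pointwise to $W_f$ recovers $f$ as a function on $G_n$, so $W_f = 0$ forces $f = 0$.

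The only genuinely substantive input is the rank-one computation of \autoref{prop:Pderivative}; everything else is formal, and there is no real obstacle here once the preceding subsection has been set up. I would therefore present the corollary as two short sentences: the first citing \autoref{prop:Pderivative} for Whittaker type, and the second pointing to the displayed identity $(\theta \circ W_f)(g) = f(g)$ to conclude that $f \mapsto W_f$ is injective, hence $P(\pi)$ is essentially generic.
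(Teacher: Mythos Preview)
Your proposal is correct and matches the paper's approach exactly: the corollary is stated immediately after the sentence ``We record these observations in the following corollary,'' and those observations are precisely the rank-one result of \autoref{prop:Pderivative} together with the identity $(\theta \circ W_f)(g) = f(g)$ giving a section of $f \mapsto W_f$. There is no separate proof in the paper; the corollary merely packages the preceding paragraph, which is what you have reproduced.
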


\subsubsection{Alternative view of the Whittaker model of \texorpdfstring{$P(\pi)$}{P(pi)}}
\label{subsubsec:alternative view}

In this subsection we attempt to illustrate why $\mathcal{W}(P(\pi),\psi_{R(\pi)})$ is more useful that $P(\pi)$ itself: it sees the natural action of $R(\pi)$ on $P(\pi)$ in both its $G_n$-structure and its $R(\pi)$-structure coming from multiplying $R(\pi)$-valued functions by elements of $R(\pi)$. We will not use these results in the rest of the paper, but include them to give a more conceptual understanding of the map $\theta$ from the previous subsection.

By extending scalars along the natural inclusion $k\subset R(\pi)$ we get an embedding $\Ind_{N_n}^{G_n}\psi_k \hookrightarrow \Ind_{N_n}^{G_n}\psi_{R(\pi)}$, which restricts to an inclusion on the summand $P = P(\pi)$
$$P\hookrightarrow P\otimes_kR(\pi).$$ The module $P\otimes_k R(\pi)$ has two distinct $R(\pi)$-module structures, both of which commute with the $G_n$-action, namely the one defined on simple tensors by
$$\phi*(f\otimes \phi') = \phi(f)\otimes \phi',$$ and the one defined by
$$\phi\cdot(f\otimes\phi') = f\otimes (\phi\cdot \phi').$$ There is a natural projection of $k[G_n]$-modules
$$\varpi: P\otimes_k R(\pi)\to P\otimes_{R(\pi)}R(\pi)\cong P$$ given by taking the quotient by the $k$-subspace $\ker(\varpi)$ generated by tensors of the form $f\otimes \phi -\phi(f)\otimes 1$. 

Since $P$ is a projective $k[G_n]$-module, $\varpi$ is a split surjection, and there exists a section $\eta:P\to P\otimes_k R(\pi)$ giving a decomposition into a direct sum of $k[G_n]$-modules
$$P\otimes_kR(\pi) = \eta(P)\oplus \ker(\varpi).$$ However, by commutativity of $R(\pi)$ we have
\begin{align*}
\phi*(f\otimes \phi' - \phi'(f)\otimes 1) &= \phi(f)\otimes \phi' - \phi'(\phi(f))\otimes 1\\
\phi\cdot (f\otimes \phi'-\phi'(f)\otimes 1) & = (f\otimes \phi\phi' - \phi\phi'(f)\otimes 1) - (\phi'(f)\otimes \phi - \phi(\phi'(f))\otimes 1),
\end{align*}
so $\ker(\varpi)$ is stable under $R(\pi)$ for both actions, and thus so is $\eta(P)$. We conclude the above splitting is in fact a splitting of $R(\pi)[G_n]$-modules for both $R(\pi)$ actions. Furthermore, given $f\in P$, we must have that $\phi*\eta(f) - \phi\cdot\eta(f)$ is an element of $\ker(\varpi)\cap \eta(P) = \{0\}$, which shows $\phi*\eta(f) = \phi\cdot \eta(f)$.

We conclude that each splitting $\eta$ gives rise to a Whittaker model
$$P \hookrightarrow \Ind_{N_n}^{G_n}\psi_{R(\pi)},$$ for which $\varpi$ is a canonical section, and whose image lands in the subset
$$\{W\in P\otimes_kR(\pi)\ :\ \phi* W = \phi\cdot W\ ,\ \phi\in R(\pi)\}.$$ From this perspective, the relation
$$\theta\circ W_f = f$$ of the previous subsection amounts to the fact that the composite map
$$P \hookrightarrow P\otimes_k R(\pi) \xrightarrow{\varpi} P$$ is equivalent to the identity.

\subsection{Definition of the new gamma factors}

Given irreducible generic $k$-representations~$\rho$ of $G_n$ and $\pi$ of $G_m$, we will define a modified gamma factor $\tilde{\gamma}(\rho\times\pi,\psi)$ as follows. 

Let $\rho_{R(\pi)}$ denote the extension of scalars $\rho\otimes_kR(\pi)$ along the structure morphism $k\hookrightarrow R(\pi)$. Now since $(\rho_{R(\pi)})^{(n)} = \rho^{(n)}\otimes_k R(\pi) \cong k\otimes_k R(\pi)\cong R(\pi)$, and $P(\pi)^{(m)}$ is free of rank one over $R(\pi)$ by \autoref{prop:Pderivative}, we may apply \autoref{cor:functional_equation} to the $R(\pi)[G_n]$-module~$\rho_{R(\pi)}$ and the $R(\pi)[G_m]$-module $P(\pi)$:
$$\tilde{\gamma}(\rho\times\pi,\psi) := \gamma(\rho_{R(\pi)}\times P(\pi),\psi)\in R(\pi)^{\times}.$$

\subsection{Completeness of Whittaker models} \label{completeness}
To prove the converse theorem we need a so-called ``$L^2$-completeness of Whittaker models'' statement. The point of passing to $R(\pi)$ coefficients instead of $k$ coefficients is to recover such a completeness statement.  \autoref{sec:sage} discusses counterexamples to the converse theorem for $k$-valued gamma factors: they arise because of the failure of completeness of Whittaker models (which for $G_1$ reduces to the dual to linear independence of characters.)

\begin{theorem}
\label{thm:completeness}
Let $H$ be an element of $\Ind_{N_n}^{G_n}\psi_k$. If
$$\sum_{x\in {N_n}\backslash G_n}H(x)W(x)=0$$ for every $W\in \mathcal{W}(P(\pi),\psi^{-1}_{R(\pi)})$, for every irreducible generic representation $\pi$ of $G_n$, then~$H$ is identically zero.
\end{theorem}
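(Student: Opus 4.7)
The plan is to reduce the statement to the non-degeneracy of the natural evaluation pairing
\[
\Ind_{N_n}^{G_n}\psi_k \times \Ind_{N_n}^{G_n}\psi_k^{-1} \to k, \qquad (H,f) \mapsto \sum_{x \in N_n\backslash G_n} H(x) f(x),
\]
which is perfect since any coset in $N_n \backslash G_n$ supports a $\psi_k^{-1}$-equivariant function. It therefore suffices, for each irreducible generic representation $\pi$ of $G_n$, to exhibit enough $k$-valued projections of $R(\pi)$-valued Whittaker functions $W \in \mathcal{W}(P(\pi),\psi^{-1}_{R(\pi)})$ to span the projective summand of $\Ind_{N_n}^{G_n}\psi_k^{-1}$ isomorphic to $P(\pi)$; direct-summing these over $\pi$ yields all of $\Ind_{N_n}^{G_n}\psi_k^{-1}$.

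First I would verify that $P(\pi)$ is of $\psi^{-1}$-Whittaker type so that $\mathcal{W}(P(\pi),\psi^{-1}_{R(\pi)})$ is defined. By Frobenius reciprocity,
\[
P(\pi)^{(n,\psi^{-1})} \cong \Hom_{k[G_n]}(\Ind_{N_n}^{G_n}\psi_k^{-1}, P(\pi)).
\]
The decomposition $\Ind_{N_n}^{G_n}\psi_k^{-1} = \bigoplus_\sigma P(\sigma)'$ over irreducible generic $\sigma$, together with the vanishing $\Hom_{k[G_n]}(P(\sigma),P(\pi)) = 0$ for $\sigma \neq \pi$ (which follows from commutativity of $\End_{k[G_n]}(\Ind_{N_n}^{G_n}\psi_k^{-1})$ as in \autoref{subsec: projective envelopes}), collapses this to $\End_{k[G_n]}(P(\pi)) = R(\pi)$, which is free of rank one over $R(\pi)$ as required.

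Next I would adapt the construction of the $k$-linear section $\theta: R(\pi) \to k$ of \autoref{subsection_whittaker_model} to the $\psi^{-1}$-setting. Fixing an embedding $P(\pi)' \subset \Ind_{N_n}^{G_n}\psi_k^{-1}$ of the projective cover as a summand, define $\theta_\pi(\lambda^{\psi^{-1}}(f)) = f(1)$ for $f \in P(\pi)'$, where $\lambda^{\psi^{-1}} : P(\pi) \to P(\pi)^{(n,\psi^{-1})} \cong R(\pi)$ is the natural quotient. This is well-defined by the same calculation as in \autoref{subsection_whittaker_model}: for $u \in N_n$ and $v \in P(\pi)' \subset \Ind_{N_n}^{G_n}\psi_k^{-1}$, one has $(uv)(1) = v(u) = \psi_k^{-1}(u)v(1)$, so the generators $uv - \psi_k^{-1}(u)v$ of $\ker\lambda^{\psi^{-1}}$ vanish under evaluation at the identity. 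The same direct computation yields $\theta_\pi \circ W_f = f$ as elements of $\Ind_{N_n}^{G_n}\psi_k^{-1}$.

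To conclude, the hypothesis gives $\sum_x H(x) W(x) = 0$ in $R(\pi)$ for every $W \in \mathcal{W}(P(\pi),\psi^{-1}_{R(\pi)})$. Applying $\theta_\pi$ to both sides yields $\sum_x H(x) f(x) = 0$ in $k$ for every $f \in P(\pi)'$. Varying $\pi$ over all irreducible generic representations exhausts $\Ind_{N_n}^{G_n}\psi_k^{-1} = \bigoplus_\pi P(\pi)'$, so $\langle H,-\rangle$ vanishes identically, forcing $H = 0$ by non-degeneracy. The main subtlety is that one cannot simply reduce $W$ modulo the maximal ideal of $R(\pi)$ (which would recover only the simple quotient $\pi$ of $P(\pi)'$, giving back the failed mod-$\ell$ converse theorem phenomenon discussed in the introduction), so the specific choice of $\theta_\pi$ via evaluation at the identity is the essential device.
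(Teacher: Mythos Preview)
Your proof is correct and follows essentially the same route as the paper: construct the $k$-linear functional $\theta_\pi$ on $R(\pi)$ via evaluation at the identity so that $\theta_\pi\circ W_f=f$, apply it to the hypothesized vanishing to obtain $\sum_x H(x)f(x)=0$ for all $f$ in the summand $P(\pi)\subset\Ind_{N_n}^{G_n}\psi_k^{-1}$, and conclude by nondegeneracy of the pairing. Your write-up is somewhat more explicit than the paper's (you spell out why $P(\pi)$ is of $\psi^{-1}$-Whittaker type and why $\theta_\pi$ is well-defined), but the argument is the same.
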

\begin{proof}
By replacing $\psi$ with $\psi^{-1}$ in \autoref{subsection_whittaker_model} we can make a choice of isomorphism $P(\pi)^{(n,\psi^{-1})}\cong R(\pi)$ for each irreducible $\psi^{-1}$-generic representation $\pi$ to get a Whittaker model 
\begin{align*}
P(\pi)&\to \Ind_{N_n}^{G_n}\psi^{-1}_{R(\pi)}\\
f&\mapsto W_f.
\end{align*}
Recall from \autoref{subsection_whittaker_model} there is a map $\theta:R(\pi) \cong P_{N_n,\psi^{-1}} \to k$ arising from $f\mapsto f(1)$ such that $f = \theta\circ W_f$.

Thus for every such $f\in R(\pi)$, we have
\begin{align*}
0& = \theta\left(\sum_{x\in N_n\backslash G_n}H(x)W_f(x)\right)\\
&= \sum_{x\in N_n\backslash G_n}H(x)\theta(W_f(x))\\
&=\sum_{x\in N_n\backslash G_n}H(x)f(x).
\end{align*}
We established in \autoref{subsec: projective envelopes} that $\Ind_{N_n}^{G_n} \psi_k$ is the multiplicity-free direct sum of the $P(\pi)$ for $\pi$ irreducible generic, and as such is spanned by $f \in P(\pi)$.  Since 
\begin{align*}
\Ind_{N_n}^{G_n}\psi_k\times \Ind_{N_n}^{G_n}\psi_k^{-1}&\to k\\
(H,f)&\to \sum_{x\in N_n\backslash G_n}H(x)f(x)
\end{align*}
is a nondegenerate duality pairing (\cite[\S I.5.11]{vig_book}), we conclude that $H$ is identically zero. 
\end{proof}

\subsection{Proof of converse theorem}\label{sec:proof_converse}
We finally arrive at the proof of \autoref{mainthm_converse}. Our strategy is inspired by the proof of the converse theorem in \cite{henniart}.

If $\rho_1$ and $\rho_2$ are irreducible cuspidal $k$-representations of $G_n$, set $$S(\rho_1,\rho_2,\psi) := \left\{(W_1,W_2)\in \mathcal{W}(\rho_1,\psi_k)\times\mathcal{W}(\rho_2,\psi_k): \restr{W_1}{P_n} = \restr{W_2}{P_n}\right\}.$$
There is a diagonal action of $P_n$ on $\mathcal{W}(\rho_1,\psi_k)\times\mathcal{W}(\rho_2,\psi_k)$ and the subspace $S(\rho_1,\rho_2,\psi)$ is stable under this action by its definition. We will show it is in fact $G_n$-stable if we suppose that $\rho_1$ and $\rho_2$ have the same gamma factors.
\begin{proposition}
\label{prop:stabilityS}
Let $\rho_1$ and $\rho_2$ be irreducible cuspidal $k$-representations of $G_n$ and suppose that
$$\tilde{\gamma}(\rho_1\times \pi,\psi) = \tilde{\gamma}(\rho_2\times\pi,\psi)$$ for all irreducible generic representations $\pi$ of $G_{n-1}$. Then $S(\rho_1,\rho_2,\psi)$ is stable under the diagonal action of $G_n$.
\end{proposition}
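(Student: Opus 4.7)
The strategy is to reduce $G_n$-stability of $S = S(\rho_1,\rho_2,\psi)$ to stability under a single element outside $P_n$. Since $G_n$ is generated by $P_n$ together with the long Weyl element $w_n$, and $S$ is already $P_n$-stable by construction, it will suffice to show that $w_n$ also stabilizes $S$. Concretely, for $(W_1,W_2) \in S$ this amounts to proving $W_1(p w_n) = W_2(p w_n)$ for every $p \in P_n$.

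The plan starts with the functional equation (\autoref{cor:functional_equation}) applied over the local Artinian $k$-algebra $R(\pi)$: for each irreducible generic $k$-representation $\pi$ of $G_{n-1}$ and each $W' \in \mathcal{W}(P(\pi),\psi^{-1}_{R(\pi)})$,
\[
I(W_i, W'; 0)\,\tilde{\gamma}(\rho_i \times \pi,\psi) = I(\tilde{W}_i, \tilde{W'}; 0), \qquad i = 1,2.
\]
Because $I(W_i, W'; 0)$ only involves $W_i$ on $P_n$, the assumption $(W_1,W_2) \in S$ makes the left-hand sides equal, and combined with the gamma factor hypothesis, the right-hand sides agree. As $\pi$ ranges over all irreducible generic representations of $G_{n-1}$ the corresponding $\tilde{W'}$ run through a complete family, so the completeness theorem \autoref{thm:completeness} (applied to $G_{n-1}$) forces $(\tilde{W}_1 - \tilde{W}_2)\bigl(\begin{smallmatrix} g & 0 \\ 0 & 1\end{smallmatrix}\bigr) = 0$ for every $g \in G_{n-1}$. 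Unwinding, this says $W_1 = W_2$ on the set $w_n L$, where $L$ is the standard copy of $G_{n-1}$ inside $G_n$. Replacing $(W_1,W_2)$ by $(p W_1, p W_2) \in S$ for $p \in P_n$ and rerunning the derivation extends this to equality on $w_n L\,P_n = w_n P_n$, and the $N_n$-$\psi_k$-equivariance of the $W_i$ on the left then delivers $W_1 = W_2$ on all of $N_n \cdot (P_n \cup w_n P_n)$.

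The main obstacle is to promote this derived equality on the left coset $w_n P_n$ to equality on the right coset $P_n w_n$: in general these are distinct cosets and $P_n w_n \not\subseteq N_n w_n P_n$. The plan to bridge the gap is to exploit the invertibility of gamma factors (\autoref{cor:gamma_invertible}), which propagates the hypothesis to the dual equality $\tilde{\gamma}({}^{\iota}\rho_1 \times \pi',\psi^{-1}) = \tilde{\gamma}({}^{\iota}\rho_2 \times \pi',\psi^{-1})$ for all irreducible generic $\pi'$ of $G_{n-1}$. Rerunning the derivation in this dual setting, and combining with the cuspidality of $\rho_1, \rho_2$ (which ensures that $\mathcal{W}(\rho_i) \to \mathcal{W}(\rho_i)|_{P_n}$ is an isomorphism, so each $W_i$ is rigidly determined by its $P_n$-restriction via the $\rho_i$-action) will yield the required equality on $P_n w_n$, and with it the $w_n$-stability and hence $G_n$-stability of $S$. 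The hardest step will be precisely this bridging argument: converting the forward derivation on $w_n P_n$ into information on $P_n w_n$ via the symmetry $g \mapsto w_n{}^{\iota}g$ and the dual functional equation, while keeping track of the passage between $k$-coefficients and the local Artinian algebras $R(\pi)$.
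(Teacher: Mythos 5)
Your forward derivation (functional equation over $R(\pi)$, equality of the two $\tilde\gamma$'s, completeness of Whittaker models) is sound and matches the first half of the paper's argument. The problem is that you have correctly identified the crux --- you obtain $W_1=W_2$ on the left-translated set $N_n\,\sigma\,G_{n-1}P_n$ (with $\sigma$ the relevant permutation matrix), whereas $w_n$-stability of $S$ requires $W_1=W_2$ on the right coset $P_nw_n$, which is not contained in that set --- but you do not actually close it. The proposed bridge (``rerunning the derivation in the dual setting, combined with cuspidality'') is a sketch, not an argument: cuspidality does give that $W\mapsto W|_{P_n}$ is injective on each $\mathcal{W}(\rho_i,\psi_k)$ separately, but that only rigidifies $W_i$ \emph{inside its own representation} and provides no mechanism for comparing $W_1(g)$ with $W_2(g)$ at $g\notin P_n$, since $\rho_1$ and $\rho_2$ are a priori different representations. (Indeed, in the paper the injectivity of restriction to $P_n$ in this comparative sense is a \emph{corollary} of the proposition you are proving.) As written, the proof is incomplete at its hardest step.

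The paper avoids the left-coset/right-coset mismatch entirely by a different bookkeeping of the group generation. Instead of $G_n=\langle P_n,w_n\rangle$, it uses $G_n=\langle P_n,{}^tP_n\rangle$, and it upgrades your one-way derivation to a chain of \emph{equivalences}:
$(W_1,W_2)\in S(\rho_1,\rho_2,\psi)$ if and only if $(\widetilde{W_1},\widetilde{W_2})\in S(\rho_1^{\vee},\rho_2^{\vee},\psi^{-1})$, using completeness in both directions together with the invertibility of the gamma factors. Stability under ${}^tP_n$ then comes for free from the intertwining identity $\widetilde{\overline{p}W}={}^{\iota}\overline{p}\,\widetilde{W}$ for $\overline{p}\in{}^tP_n$: since ${}^{\iota}\overline{p}\in P_n$ and the dual set $S(\rho_1^{\vee},\rho_2^{\vee},\psi^{-1})$ is $P_n$-stable \emph{by definition}, one transports the pair to the dual side, acts there, and comes back through the equivalence. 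No comparison of $W_1$ and $W_2$ off $P_n$ is ever needed. If you want to salvage your approach, the fix is exactly this: make every implication in your derivation reversible, recast the conclusion as membership of $(\widetilde{W_1},\widetilde{W_2})$ in the dual $S$-set, and replace the element $w_n$ by the subgroup ${}^tP_n$.
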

\begin{proof}
The restriction of a Whittaker function to $P_n$ is determined by its values on $G_{n-1}$ (embedded in $G_n$ in the top left). Therefore:
\begin{align*}
    & (W_1,W_2)\in S(\rho_1,\rho_2,\psi) \\
    &\Leftrightarrow W_1\left(\begin{smallmatrix}x&0\\0&1 \end{smallmatrix}\right) = W_2\left(\begin{smallmatrix}x&0\\0&1 \end{smallmatrix}\right) \text{ for all } x \in G_{n-1}\\
    &\xLeftrightarrow{\text{\autoref{thm:completeness}}} \sum_{x\in N_{n-1}\backslash G_{n-1}}W_1\left(\begin{smallmatrix}x&0\\0&1 \end{smallmatrix}\right)W'(x) = \sum_{x\in N_{n-1}\backslash G_{n-1}}W_2\left(\begin{smallmatrix}x&0\\0&1 \end{smallmatrix}\right)W'(x)\\
    & \text{for all $W'\in\mathcal{W}(P(\pi),\psi_{R(\pi)}^{-1})$, for all $\pi$}\\
    &\xLeftrightarrow{\text{equality of $\tilde{\gamma}$'s}} \sum_{x\in N_{n-1}\backslash G_{n-1}}W_1\left(\begin{smallmatrix}0&1\\x&0 \end{smallmatrix}\right)W'(x) = \sum_{x\in N_{n-1}\backslash G_{n-1}}W_2\left(\begin{smallmatrix}0&1\\x&0 \end{smallmatrix}\right)W'(x)\\
    & \text{for all $W'\in\mathcal{W}(P(\pi),\psi_{R(\pi)}^{-1})$, for all $\pi$}\\
    &\xLeftrightarrow{\text{\autoref{thm:completeness}}} W_1\left(\begin{smallmatrix}0&1\\{^\iota x}&0 \end{smallmatrix}\right) = W_2\left(\begin{smallmatrix}0&1\\{^\iota x}&0 \end{smallmatrix}\right) \text{ for all } x \in G_{n-1}\\
    &\Leftrightarrow \widetilde{W_1}\left(\begin{smallmatrix}x&0\\0&1 \end{smallmatrix}\right) = \widetilde{W_2}\left(\begin{smallmatrix}x&0\\0&1 \end{smallmatrix}\right) \text{ for all } x \in G_{n-1}\\
    &\Leftrightarrow (\widetilde{W_1},\widetilde{W_2})\in S(\rho_1^{\vee},\rho_2^{\vee},\psi^{-1})
\end{align*}
(Note that we are applying 
\autoref{cor:functional_equation} in the step labeled ``equality of $\widetilde{\gamma}$'s'' above). Now if $\overline{p}\in {^tP_n}$ we have, for $i=1,2$,
    \begin{align} \label{whit model transpose}
    \begin{split}
       \wt{\overline{p}W_i}(g) &= (\overline{p}W_i)(\begin{psmallmatrix}0&1\\1&0\end{psmallmatrix}(^\iota g)) \\
        &= W_i(\begin{psmallmatrix}0&1\\1&0\end{psmallmatrix}(^\iota g)\overline{p}) \\
        &= W_i(\begin{psmallmatrix}0&1\\1&0\end{psmallmatrix}(^\iota(g(^\iota\overline{p})))) \\
        &= \wt{W}_i(g(^\iota\overline{p})) = (^\iota\overline{p}\wt{W}_i)(g).
    \end{split}
    \end{align}

Thus if $(W_1,W_2)\in S(\rho_1,\rho_2,\psi)$, then since $S(\rho_1^{\vee},\rho_2^{\vee},\psi^{-1})$ is $P_n$-stable we have
$$(\widetilde{\overline{p}W_1},\widetilde{\overline{p}W_2}) = ({^\iota\overline{p}}\widetilde{W_1},{^\iota\overline{p}}\widetilde{W_2})\in S(\rho_1^{\vee},\rho_2^{\vee},\psi^{-1}).$$ The above equivalences then imply that $(\overline{p}W_1,\overline{p}W_2)$ is in $S(\rho_1,\rho_2,\psi)$. Thus we have shown that $S(\rho_1,\rho_2,\psi)$ is stable under both $P_n$ and $^tP_n$. Since these two groups generate $G_n$ we conclude that $S(\rho_1,\rho_2,\psi)$ is stable under $G_n$.
\end{proof}

\begin{corollary}\label{corollary restriction to Pn}
Suppose $\rho_1$ and $\rho_2$ are irreducible cuspidal representations of $G_n$ over $k$ and suppose that 
    \[ \tilde{\gamma}(\rho_1\times\pi,\psi) = \tilde{\gamma}(\rho_2\times\pi,\psi) \]
for every irreducible generic representation $\pi$ of $G_{n-1}$. Let $W_1$, $W_2$ be elements of the Whittaker spaces $\calw(\rho_1,\psi_k)$, $\calw(\rho_2,\psi_k)$, respectively. Then the following equivalence holds
    \[ W_1|_{P_n} = W_2|_{P_n}\text{ if and only if }W_1 = W_2. \]
\end{corollary}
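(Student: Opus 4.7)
The reverse direction is immediate, so we focus on the forward direction. The plan is to leverage \autoref{prop:stabilityS}: the equality of gamma factors with all irreducible generic $\pi$ forces $S(\rho_1,\rho_2,\psi)$ to be stable under the diagonal $G_n$-action, and this stability together with the hypothesis $W_1|_{P_n} = W_2|_{P_n}$ promotes equality on $P_n$ to equality on all of $G_n$ via translation.

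More precisely, suppose $W_1|_{P_n} = W_2|_{P_n}$, so that $(W_1,W_2) \in S(\rho_1,\rho_2,\psi)$ by definition. Recall that both Whittaker spaces sit inside $\Ind_{N_n}^{G_n}\psi_k$, so $G_n$ acts by right translation: $(g\cdot W)(h) = W(hg)$. By \autoref{prop:stabilityS}, for every $g \in G_n$ the pair $(g\cdot W_1, g\cdot W_2)$ still lies in $S(\rho_1,\rho_2,\psi)$. Unwinding the definition of $S$, this means
\[
(g\cdot W_1)(p) = (g\cdot W_2)(p) \quad \text{for all } p \in P_n,
\]
i.e.\ $W_1(pg) = W_2(pg)$ for all $p \in P_n$. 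Specializing to $p = I_n \in P_n$, we obtain $W_1(g) = W_2(g)$. Since $g \in G_n$ was arbitrary, $W_1 = W_2$.

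I expect no genuine obstacle here: the whole conceptual content has already been packaged into \autoref{prop:stabilityS}, which is where the completeness result (\autoref{thm:completeness}) and the gamma factor functional equation combine to upgrade $P_n$-stability of $S$ to $G_n$-stability. Once that proposition is available, the corollary reduces to the trivial observation that a function on $G_n$ is determined by its values at all translates of the identity.
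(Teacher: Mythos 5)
Your proof is correct and is essentially identical to the paper's: both deduce from \autoref{prop:stabilityS} that $(gW_1,gW_2)$ remains in $S(\rho_1,\rho_2,\psi)$ for every $g\in G_n$ and then evaluate at the identity to conclude $W_1(g)=W_2(g)$. No issues.
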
\label{kirillov_model}
\begin{proof}
Let $W_1 \in \calw(\rho_1,\psi_k)$ and $W_2 \in \calw(\rho_2,\psi_k)$
such that $\restr{W_1}{P_n} = \restr{W_2}{P_n}$.
Then for all $g \in G_n$, \autoref{prop:stabilityS} implies that $\restr{(gW_1)}{P_n} = \restr{(gW_2)}{P_n}$. Evaluating at the identity, we see that
    \[ W_1(g) = (gW_1)(1) = (gW_2)(1) = W_2(g), \]
so $W_1=W_2$.
\end{proof}
\begin{corollary}
Let $\rho \in \Rep_k(G_n)$ be irreducible cuspidal and fix $W \in \mathcal{W}(\rho,\psi_k)$. If $W|_{P_n}=0$ then $W=0$. In other words the map 
\begin{align*}
    \mathcal{W}(\rho,\psi_k)&\to \Ind_{N_n}^{P_n}\psi_k\\
    W&\mapsto W|_{P_n}
\end{align*}
is injective (hence an isomorphism of $P_n$-modules by \autoref{lem:restriction_to_pn}).
\end{corollary}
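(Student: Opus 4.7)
The plan is to deduce this immediately from the preceding corollary (the one labeled \texttt{kirillov\_model}) by specializing to the case $\rho_1=\rho_2=\rho$. Since cuspidal irreducible representations are generic by \autoref{CuspidalCriterionVigneras} (their top derivative is one-dimensional), $\rho$ is of $\psi$-Whittaker type, and hence its Whittaker model is defined. The hypothesis that $\widetilde{\gamma}(\rho_1\times\pi,\psi)=\widetilde{\gamma}(\rho_2\times\pi,\psi)$ for all irreducible generic $\pi$ of $G_{n-1}$ is trivially satisfied when $\rho_1=\rho_2=\rho$, since both sides are literally the same element of $R(\pi)^{\times}$.

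First I would apply the previous corollary with $W_1=W$ and $W_2=0$, noting that $0\in\mathcal{W}(\rho,\psi_k)$ tautologically. The assumption $W|_{P_n}=0$ is exactly the statement $W_1|_{P_n}=W_2|_{P_n}$, so the corollary concludes $W_1=W_2$, i.e.\ $W=0$. This establishes the injectivity of the restriction map
\[ \mathcal{W}(\rho,\psi_k)\to \Ind_{N_n}^{P_n}\psi_k,\qquad W\mapsto W|_{P_n}. \]

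For the parenthetical claim that this injection is an isomorphism of $P_n$-modules, I would combine the injectivity just obtained with the surjectivity provided by \autoref{lem:restriction_to_pn}, which applies precisely because $\rho$ is of $\psi$-Whittaker type. The map is $P_n$-equivariant by construction (restriction commutes with right translation by elements of $P_n$), so injectivity plus surjectivity yields the desired isomorphism.

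There is essentially no obstacle here: the content is entirely in the preceding corollary. The only point requiring a brief justification is that the hypothesis of that corollary is vacuous in the diagonal case $\rho_1=\rho_2$, and that $0$ is a legitimate element of the Whittaker model to take as $W_2$. Thus this corollary is best presented as a one-sentence consequence of \autoref{kirillov_model} together with \autoref{lem:restriction_to_pn}.
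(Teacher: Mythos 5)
Your argument is correct and is exactly the intended one: the paper gives no explicit proof of this corollary, treating it as an immediate consequence of the preceding corollary (specialized to $\rho_1=\rho_2=\rho$, $W_2=0$, where the gamma-factor hypothesis is vacuous) together with the surjectivity from \autoref{lem:restriction_to_pn}. Nothing is missing.
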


\begin{corollary}
Let $\rho_1$ and $\rho_2$ be irreducible cuspidal $k$-representations of $G_n$ and suppose that
$$\tilde{\gamma}(\rho_1\times \pi,\psi) = \tilde{\gamma}(\rho_2\times\pi,\psi)$$ for all irreducible generic representations $\pi$ of $G_{n-1}$. Then $\rho_1\cong \rho_2$.
\end{corollary}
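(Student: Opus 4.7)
The plan is to combine the three preceding corollaries with \autoref{prop:stabilityS} in order to upgrade an abstract $P_n$-isomorphism of Whittaker models to a $G_n$-isomorphism, at which point $\rho_1 \cong \rho_2$ follows at once because both representations are irreducible generic and hence isomorphic to their own Whittaker models.

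More precisely, I would first invoke the corollary immediately above: for each $i \in \{1,2\}$, the restriction map $W \mapsto W|_{P_n}$ is an isomorphism of $P_n$-modules $\calw(\rho_i,\psi_k) \xrightarrow{\sim} \Ind_{N_n}^{P_n}\psi_k$, where injectivity uses cuspidality of $\rho_i$ (through the Kirillov-model corollary) and surjectivity is \autoref{lem:restriction_to_pn}. Composing the $i=1$ isomorphism with the inverse of the $i=2$ one then produces a $P_n$-equivariant bijection $\Theta : \calw(\rho_1,\psi_k) \xrightarrow{\sim} \calw(\rho_2,\psi_k)$, uniquely characterized by $\Theta(W_1)|_{P_n} = W_1|_{P_n}$. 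The key observation is that, by construction, the graph of $\Theta$ inside $\calw(\rho_1,\psi_k) \times \calw(\rho_2,\psi_k)$ is exactly the subspace $S(\rho_1,\rho_2,\psi)$.

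At this point I would apply \autoref{prop:stabilityS}, which asserts precisely that, under the hypothesis of equal new gamma factors, $S(\rho_1,\rho_2,\psi)$ is stable under the diagonal action of all of $G_n$, not just of $P_n$. This immediately promotes $\Theta$ from a $P_n$-intertwiner to a $G_n$-intertwiner, and since $\rho_i$ is irreducible generic, $\rho_i \cong \calw(\rho_i,\psi_k)$ as $G_n$-modules, giving $\rho_1 \cong \rho_2$. The genuine work has already been absorbed into \autoref{prop:stabilityS}, whose proof in turn relied on the refined completeness \autoref{thm:completeness} over the Artinian coefficient rings $R(\pi)$; I expect no further obstacle in this final step. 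The main difficulty really lies upstream, in having set up a completeness statement strong enough to compensate for the failure of the ordinary $k$-valued theory, and the corollary above is in essence bookkeeping on top of that.
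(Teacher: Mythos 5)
Your proof is correct and follows essentially the same route as the paper's: the paper likewise builds a $k[G_n]$-morphism out of the preceding corollaries, passing through $S(\rho_1,\rho_2,\psi)$ and using \autoref{prop:stabilityS} to get $G_n$-equivariance, then concludes by irreducibility. The only cosmetic difference is that you package the map as a composite $\Theta$ whose graph is $S(\rho_1,\rho_2,\psi)$, whereas the paper maps $\mathcal{W}(\rho_1,\psi_k)$ into $S(\rho_1,\rho_2,\psi)$ and projects to the second factor.
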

\begin{proof}
By the previous corollary, for every $W_1\in \mathcal{W}(\rho_1,\psi_k)$ there is a unique $W_2\in \mathcal{W}(\rho_2,\psi_k)$ such that $W_1|_{P_n}=W_2|_{P_n}$. This gives a morphism of $k[G_n]$-modules $\mathcal{W}(\rho_1,\psi_k)\to S(\rho_1,\rho_2,\psi)$. Projection on the second factor gives a composite morphism    \[ \mathcal{W}(\rho_1,\psi_k) \to S(\rho_1,\rho_2,\psi_k)\to \mathcal{W}(\rho_2,\psi_k), \]
which is nonzero and $G_n$-equivariant. Since $\rho_1$ and $\rho_2$ are irreducible it follows that $\rho_1\cong \rho_2$. 
\end{proof}

\begin{appendices}


\addtocontents{toc}{\protect\setcounter{tocdepth}{1}}
\section{Counterexamples to the na\"ive converse theorem}
\label{sec:sage}

We used Sage to discover counterexamples to the na\"ive converse theorem mod~$\ell$ for~$\GL_2(\F_q)$, following the explicit computations for gamma factors in Theorem 21.1 of \cite{ps}. The code can be found in \cite{Counterexample} Our main function \texttt{gammafactors} computes $\overline{\F}_\ell$-valued gamma factors. In the $GL_2(\F_q)$ context, the ``na\"ive converse theorem" refers to the following:

\begin{ncs*}[Na\"ive Converse Theorem]
Let $k$ be an algebraically closed field of characteristic $\ell$. Let $\rho$ an irreducible generic representation of $GL_2(\F_q)$, $\omega$ a character of $GL_1(\F_q)$, and $\psi$ a nontrival character of~$\F_q$, all $k$-valued. For the gamma factor $\gamma(\rho \times \omega,\psi):=\gamma(\rho,\omega,\psi)$ defined by \autoref{mainthm_functionaleqn}, if
\[ \gamma(\rho \times \omega,\psi) = \gamma(\rho' \times \omega,\psi) \text{ for all }\omega \in \widehat{GL_1(\F_q)},\]  then $\rho \cong \rho'$. 
\end{ncs*}

We found counterexamples to the na\"ive converse theorem with $k = \bar{\F}_\ell$ for the pairs  \[ (\ell, q) = (2,5), (2,17), (3,7), (3,19), (5,11), (11,23), (23,47), (29,59). \] In all of these situations, $q = 2\ell^i+1$ for some positive integer $i$. Informed by this data, we make the following: 
\begin{conjecture*} The na\"ive converse theorem for mod $\ell$ representations of $\GL_2(\mathbb{F}_q)$ fails exactly when $q = 2\ell^i+1$ for some value of $i>0$. 
\end{conjecture*}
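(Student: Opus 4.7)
The plan is to reduce the conjecture to an arithmetic problem about Gauss sums modulo $\ell$, via the explicit formula for the classical gamma factor recorded in \cite[Theorem 21.1]{ps}. Specifically, the irreducible cuspidal representations of $\GL_2(\F_q)$ are in bijection with $\gal(\F_{q^2}/\F_q)$-orbits of regular characters $\theta : \F_{q^2}^\times \to \overline{\Q}^\times$ not factoring through the norm $N$, and up to a normalization
\[
    \gamma(\pi_\theta \times \chi, \psi) = -q^{-1} \sum_{x \in \F_{q^2}^\times} \theta(x)\chi(N(x))\psi(\Tr(x))
\]
for every character $\chi$ of $\F_q^\times$; reducing modulo $\ell$ yields a gamma factor expressible as a twisted Gauss sum in $\overline{\F}_\ell$. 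The naive converse theorem for $(\ell, q)$ therefore fails iff there exist $\overline{\F}_\ell$-valued regular characters $\theta_1, \theta_2$ of $\F_{q^2}^\times$ with $\theta_2 \notin \{\theta_1, \theta_1^q\}$ such that $\red_\ell G(\theta_1 \cdot (\chi \circ N)) = \red_\ell G(\theta_2 \cdot (\chi \circ N))$ for all $\overline{\F}_\ell$-valued characters $\chi$ of $\F_q^\times$, where $G$ denotes the Gauss sum against $\psi \circ \Tr$. Since such $\chi$ factor through the maximal prime-to-$\ell$ quotient of $\F_q^\times$, the number of constraints is $(q-1)/\ell^v$, where $\ell^v$ is the largest power of $\ell$ dividing $q-1$.

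For the ``if'' direction I would proceed constructively: when $q - 1 = 2\ell^i$, there are exactly two constraining characters ($\chi = \mathbf{1}$ and $\chi = \eta$ the Legendre symbol), giving only two Gauss sum identities mod $\ell$. Writing $\theta_2 = \theta_1 \cdot \mu$ for an auxiliary character $\mu$ of $\F_{q^2}^\times$, the two equations should impose only mild constraints, admitting non-trivial solutions with $\mu \notin \{1, \theta_1^{q-1}\}$. Using Stickelberger's theorem to control $\ell$-adic valuations of Gauss sums (in terms of $p$-adic digit sums) and exploiting that $q - 1 = 2\ell^i$ forces most characters of $\F_q^\times$ to become trivial mod $\ell$, one should construct an explicit $\mu$ making both Gauss sum identities hold while $\theta_1 \mu$ remains non-Galois-conjugate to $\theta_1$. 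The explicit examples from \autoref{sec:sage} provide concrete candidates on which to model such a general construction.

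The harder direction is the converse: assuming $(q-1)/\ell^v \geq 3$, one must deduce that the naive converse theorem holds. The strategy is Fourier-theoretic: the identities $\red_\ell G(\theta_1 \cdot (\chi \circ N)) = \red_\ell G(\theta_2 \cdot (\chi \circ N))$ for all $\chi$ amount, after $\chi$-Fourier inversion on the prime-to-$\ell$ quotient of $\F_q^\times$, to fiberwise identities relating $\sum_{N(x)=a} \theta_i(x)\psi(\Tr(x))$ for $a \in \F_q^\times$, and one then argues that these force $\theta_1 \in \{\theta_2, \theta_2^q\}$ using the non-degeneracy of norm and trace. The technical core, and where I expect the main obstacle to lie, is proving precisely that the hypothesis $(q-1)/\ell^v \geq 3$ suffices to invert the Fourier transform modulo $\ell$ here; a careful analysis of which Gauss sum ratios can vanish mod $\ell$ is needed, presumably via Stickelberger's theorem together with the Galois structure of the cyclotomic field $\Q_\ell(\zeta_{q^2-1})$. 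A secondary issue is to handle separately non-prime $q = p^f$ with $f > 1$, since the tabulated data only confirms the conjecture for prime $q$ and the Catalan-type equation $p^f = 2\ell^i + 1$ should be analyzed on its own terms.
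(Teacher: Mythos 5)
This statement is not a theorem in the paper: it is an open conjecture, and the paper's only support for it is the finite SAGE computation of \autoref{sec:sage} (counterexamples at the listed pairs, verification for the remaining pairs with $\ell\le 11$ and $q=p\le 23$). There is therefore no proof in the paper to compare yours against, and the relevant question is whether your proposal actually establishes the conjecture. It does not. Your reduction is sound and matches the paper's own setup in Appendix A: the na\"ive gamma factor is the mod-$\ell$ reduction of a twisted Gauss sum, and the converse theorem for $(\ell,q)$ fails iff two non-Galois-conjugate regular characters $\theta_1,\theta_2$ of $\F_{q^2}^\times$ have equal reduced Gauss sums $G(\theta_i\cdot(\chi\circ N))$ for all $\overline{\F}_\ell$-valued $\chi$, of which there are exactly $(q-1)/\ell^v$ with $\ell^v\,\|\,(q-1)$. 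But from that point on, both directions are programs rather than arguments.

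For the ``if'' direction, you never construct the auxiliary character $\mu$; you assert that two Gauss-sum identities ``should impose only mild constraints,'' which is a heuristic, not a proof --- constraint counting over $\overline{\F}_\ell$ does not produce solutions of the required multiplicative shape, and the same heuristic would wrongly predict failure in cases the conjecture excludes (e.g.\ $q=\ell^i+1$ with $q$ a power of $2$, where there is only the trivial constraining character). Note also that for $\ell=2$ your count is off: when $q-1=2^{i+1}$ the quadratic character reduces to the trivial one mod $2$, so there is one constraint, not two. For the ``only if'' direction you explicitly identify the technical core --- invertibility of the mod-$\ell$ Fourier inversion over $\chi$, equivalently control of which Gauss-sum ratios can degenerate mod $\ell$ via Stickelberger --- and leave it unresolved; this is precisely the step where the known counterexamples show something genuinely delicate happens (the failure of $L^2$-completeness of Whittaker models that the paper discusses). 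Finally, the case of non-prime $q=p^f$ is not even supported by the paper's data, so any proof must treat it from scratch, as you note but do not do. In short: the setup is correct and consistent with the paper's computations, but no step of either implication is carried out, so this remains a plan for attacking an open conjecture rather than a proof of it.
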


Below, we make this conjecture precise by defining the na\"ive mod $\ell$ gamma factor. We then describe the algorithm through which we found the counterexamples. 

\subsection{Mod \texorpdfstring{$\ell$}{ell} gamma factors of \texorpdfstring{$\GL_2(\mathbb{F}_q)$}{GL(2,Fq)}}

Our computations rely on the explicit realizations of gamma factors of irreducible cuspidal representations of  $\GL_2(\mathbb{F}_q)$ as Gauss sums.  

The specialization of \autoref{section functional equation} and \autoref{mainthm_functionaleqn} to $n=2$, and $m=1$ 
recovers the construction of \cite{ps}, and extends them to representations valued in any Noetherian~$\Z[1/p,\zeta_p]$-algebra $R$. For simplicity, assume $R$ is a field, let $\rho$ be an irreducible generic $R$-representation of $\GL_2(\F_q)$, and let $\omega$ be a character of $\GL_1(\F_q) = \F_q^\times$ not exceptional for $\rho$. Then $\gamma(\rho \times \omega, \psi)$ is defined by the functional equation
\begin{equation}
\label{eq def gamma naive} 
    \gamma(\rho \times \omega, \psi) \sum_{x \in \F_q^{\times}} W\begin{psmallmatrix}x&0\\0&1\end{psmallmatrix} \, \omega(x) = \sum_{x \in \F_q^{\times}} W\begin{psmallmatrix}0&1\\x&0\end{psmallmatrix} \, \omega(x),
\end{equation}
for any $W \in \mathcal{W}(\rho,\psi)$.

Let $R = \overline{\F}_\ell$ with~$(q,\ell) = 1$ and $(\rho,V)$ be irreducible cuspidal. Vigneras \cite{vigneras} constructs~$\rho=\rho_\nu$ from a character $\nu$ of $\F_{q^2}^\times$. There is an identification
    \[ V|_{P_2} \;\cong \;\Ind_{N_2}^{P_2}\psi \;\cong\; \{f: \F_q^\times \to \overline{\F}_\ell\}, \]
where the first isomorphism follows from \autoref{CuspidalCriterionVigneras} and the second is restriction to $\F_q^\times \leq P_2$. In these coordinates, there is a unique Bessel vector $f \in V$ satisfying
    \[ f(x) = \delta_{x=1}, \quad W_f\begin{psmallmatrix}x&0\\0&1\end{psmallmatrix}=\delta_{x=1}, \quad \text{and} \quad \rho(n)f=\psi(n)f, \; n \in N_2. \]
The second property together with the functional equation imply that
    \[ \gamma(\rho_\nu \times \omega, \psi) = \sum_{x \in \F_q^{\times}} W_f\begin{psmallmatrix}0&1\\x&0\end{psmallmatrix}\omega(x) .\]
Using the properties of $f$, we replicate the computations of \cite[\S 21]{ps} using the constructions of \cite{vigneras} to recover
\[ \gamma(\rho_\nu \times \omega, \psi) = q^{-1}\nu(-1)\sum_{t \in \F_{q^2}^\times} \nu(t)\omega(t\overline{t})^{-1}\psi(t+\overline{t}), \] for $\overline{t} = t^q$. This realizes the na\"ive mod $\ell$ gamma factor as a Gauss sum.

\subsection{The algorithm}
The algorithm executes two tasks: 
\begin{enumerate}
    \item The function \texttt{gammafactor} computes gamma factors. 
    \item The function \texttt{iscounterexample} detects equalities between gamma factors. \\ 
\end{enumerate}

\paragraph{\bf The function \texttt{gammafactor}.} Let $q$ be prime. To compute the Gauss sums, we exploit that all groups in sight are cyclic. We have the following variables: 
\begin{itemize}
    \item \texttt{w} is a choice of generator of $\F_{q^2}^\times$,
    \item \texttt{coprime$\_$m} (resp. \texttt{coprime$\_$n}) is the largest divisor of $q^2-1$ (resp. $q-1$) coprime to $\ell$.
    \item \texttt{nu} is a choice of primitive root of unity of order \texttt{coprime$\_$m}
    \item \texttt{omega} is a choice of primitive root of unity of order \texttt{coprime$\_$n}
    \item \texttt{psi} is a choice of primitive $q^{\text{th}}$ root of unity, and we fix the additive character $\psi: \F_q \to \overline{\F}_\ell^\times$ so that $\psi(1) = \texttt{psi}$.
\end{itemize}
This allows us to identify characters of $\F_{q^2}^\times $ and $\F_{q}^\times$ with integers in the relevant ranges as follows: \begin{itemize}
    \item For $i \in [0,\texttt{coprime}\_\texttt{m}-1]$, the character $\nu_i$ of $\F_{q^2}^\times$ is defined by \[ \nu_i(\texttt{w}) = \texttt{nu}^i. \] We will denote the cuspidal representation $\rho_{\nu_i}$ by $\rho_i$.
    \item For $j \in [0,\texttt{coprime}\_\texttt{n}-1]$, the character $\omega_j$ of $\F_{q}^\times$ is defined by \[ \nu_i(\texttt{w}^{q+1}) = \texttt{omega}^j. \]  Note that $\texttt{w}^{q+1}$ is a generator of $\F_q^\times$.
    \end{itemize}
The input of the function \texttt{gammafactor} is the triple $(\texttt{q},\texttt{i},\texttt{j})$.  Letting $\texttt{f}= \texttt{nu} \, \hat{} \, (\texttt{Integer}(\texttt{m}/\texttt{2}))$, the function returns
\[
\texttt{gf} := \texttt{f} * 
  \texttt{sum}(\texttt{nu}\,\hat{}\,(\texttt{i}*\texttt{k})*\texttt{omega}\,\hat{}\,(\texttt{j}*\texttt{k})*\texttt{psi}\,\hat{}\,(\texttt{w}\,\hat{\,}\texttt{k}+\texttt{w}\,\hat{}\,(\texttt{q}*\texttt{k})) \texttt{ for}\; \texttt{k}\; \texttt{in } [\texttt{0}..\texttt{m-1}]) 
\]
which computes
\[ q \cdot \gamma(\rho_i,\omega_j) = \nu_i(-1)\sum_{k=0}^{m-1}\nu_i(\texttt{w}^k)\omega_j(\texttt{w}^{(q+1)\cdot k})\psi(\texttt{w}^k+\texttt{w}^{qk}).\]\\

\paragraph{\bf The function \texttt{iscounterexample}.}

This function compares the output of the function \texttt{gammafactor(q,i,j)} for different values of \texttt{i} and \texttt{j}. First recall that, $\rho_i \cong \rho_{i'}$ for $i \neq i'$ precisely when $\nu_i = \overline{\nu}_{i'}$, i.e. if $i' \equiv q\cdot i$ mod $m$.

The function \texttt{gammafactorarray} first runs over all isomorphism classes of irreducible cuspidal representations $\rho_j$, removes duplicates, and records a list \texttt{nonConjChars} of integers~\texttt{j} corresponding to a list of non-duplicate $\rho_j$. In order to reduce runtime and avoid computing unnecessary gamma factors, it next computes \texttt{gammafactor(q,i,0)} for all values of \texttt{i} in the list \texttt{nonConjChars}. If two values \texttt{i} and \texttt{i'} have the same gamma factor $\gamma(\rho_i,\one)$, they are added to the list \texttt{potential$\_$duplicates}. Finally, the function returns an array \texttt{gammafactorarray} of \texttt{gammafactor(q,i,j)} for all \texttt{i} in \texttt{potential$\_$duplicates} and \texttt{j} in the range \texttt{[0,\texttt{coprime}$\_$\texttt{n}$-1$]}. 

The function \texttt{iscounterexample} then runs the utility function \texttt{findduplicates}, which takes as an input an array and returns a list of duplicates among the rows of the array. Finally, the function \texttt{iscounterexample} prints the list of duplicates. 

Currently, the speed of the algorithm is restricted by the actual computations of the Gauss sums, which runs at least in $O(q^2)$.

\section{Example of \texorpdfstring{$\widetilde{\gamma}(\pi\times \pi',\psi)$}{new gamma factor} for \texorpdfstring{$\GL_2$}{GL(2)}} \label{sec:computing new gamma}

In this appendix we give an example illustrating 
\autoref{mainthm_converse} in the case $n=2$, $m=1$, $q=5$, and $\ell=2$. More precisely, we compute the new gamma factors  $\widetilde{\gamma}(\rho_1\times \omega,\psi)$ and $\widetilde{\gamma}(\rho_\nu\times \omega,\psi)$ of \autoref{def:new-gamma-factor} when $\rho_1$ and $\rho_\nu$ are the irreducible cuspidal representations of $\GL_2(\F_q)$ occurring in the first counterexample in the list in \autoref{sec:sage}.

Suppose $n=2$, $m=1$, $q=5$, and $\ell=2$. Since $\flbar^\times$ has no elements of order $2$ or $4$, there is only one irreducible representation $\omega$ of $G_1=\GL_1(\F_q)$, namely the trivial character $\one$. Since the unipotent subgroup $N_1$ is trivial, $\one$ is generic and the Whittaker space $\Ind_{N_1}^{G_1}(\psi)$ is simply the space of functions $\{f:\F_q^{\times}\to \flbar \}$ with the action of right-translation. There is an isomorphism $\lambda$ of $\flbar[\F_q^{\times}]$-modules, 
\begin{align*}
\lambda: \Ind_{N_1}^{G_1}(\psi)&\to \flbar[\F_q^{\times}]\\
f&\mapsto \sum_{x\in \F_q^{\times}}f(x)x\ ,
\end{align*}
for the action of $\F_q^{\times}$ on the target given by 
    \[ g(\sum_{x\in \F_q^{\times}}\alpha_x x) = \sum_{x\in \F_q^{\times}}\alpha_x g^{-1}x. \]
Thus $\Ind_{N_1}^{G_1}(\psi)$ is naturally isomorphic to the free (hence projective) module $\flbar[\F_q^{\times}]$. By \autoref{subsec: projective envelopes}, the indecomposable summands of $\Ind_{N_1}^{G_1}(\psi)$ are precisely the projective envelopes of the distinct irreducible $\psi$-generic representations of $G_1$, of which there is only one, namely $\one$. Thus $\Ind_{N_1}^{G_1}(\psi)$ is indecomposable and is the projective envelope $P(\one)$. The endomorphism ring $R(\one)$ is $$\End_{\flbar[\F_q^{\times}]}(\flbar[\F_q^{\times}]) = \flbar[\F_q^{\times}].$$ The gamma factors $\widetilde{\gamma}(\rho\times \one)$ are elements of the commutative ring $R(\one)=\flbar[\F_q^{\times}]$; our aim is to compute them. 

We must consider $P(\one)$ as a representation of $G_1$ with coefficients in the ring $R(\one)$ and compute its $R(\one)$-valued Whittaker model. The Bernstein-Zelevinsky derivative $P(\one)^{(1)}$ is precisely $P(\one)_{N_1,\psi} = P(\one)$ so the map $\lambda$ above (considered only as a map of $\flbar$-vector spaces) defines by Frobenius reciprocity a map
\begin{align*}
P(\one)&\to \Ind_{N_1}^{G_1}(\psi_{R(\one)})\\
f &\mapsto W_f\ ,
\end{align*}
where $W_f(x) = \lambda(xf)$. In particular, $W_f(1) = \lambda(f)\in R(\one)$. On the other hand, \autoref{subsubsec:alternative view} gives a way to describe $W_f$ completely. Namely, given $g\in \F_q^{\times}$, we must have $(g*W_f)(1)=W_f(g)$ and also $(g*W_f)(1) = g\cdot W_f(1)=g\lambda(f)$ (multiplication in the group ring $R(\one)$), so we have
$$W_f(g) = g\lambda(f).$$ (Note that composing $W_f$ with the evaluation-at-identity map $\theta:P(\one)\to \flbar$ returns $f$, c.f. \autoref{subsection_whittaker_model}). 

Shortly, we will make use of a particular $R(\one)$-valued Whittaker function of $P(\one)$. If $f$ is given by $\delta_{x=1}$ we set $W_1 = W_{\delta_{x=1}}$. Letting $g$ be a generator of $\F_q^{\times}$, we have, for $i=0,1,2,3$,
$$W_1(g^i) = g^i\lambda(f) = g^i(1 + 0g + 0g^2 + 0g^3) = g^i.$$

Given an irreducible cuspidal representation $(\rho,V)$ of $\GL_2(\F_q)$ we can compute $\widetilde{\gamma}(\rho\times \one)$ in a similar manner to \autoref{sec:sage}, while working over $\flbar[\F_q^{\times}]$ instead of $\flbar$. As in \cite[\S 19]{ps} the Bessel function $J_\rho\in \calw(\rho,\psi)$ is defined by the property $J_\rho\left(\begin{smallmatrix} x &\\&1\end{smallmatrix}\right) = \delta_{x=1}$ and that it scales by $\psi(n)$ under both left and right-translation by $n\in N_2$. For such $J_{\rho}$ we have
\begin{align*}
\widetilde{\gamma}(\rho_1\times\one) &= \sum_{x\in \F_q^{\times}}J_\rho\left(\begin{smallmatrix} 0&1\\x&1\end{smallmatrix}\right)W_1(x)
\end{align*}
where $W_1(x)$ is the element of $\calw(P(\one), \psi_{R(\one)}^{-1})$ described above.

Let $\rho_1$ be the cuspidal representation of $\GL_2(\F_q)$ coming from the trivial character $\F_{q^2}^{\times}\to \flbar^{\times}$; it is the irreducible cuspidal representation occuring as a subquotient of the length-three module $$\Ind_{B_2}^{G_2}(\one)= \{h:\GL_2(\F_q)\to \flbar\ :\ h(bg) = h(g),\ b\in B_2,\ g\in G_2\},$$  which has the trivial character as a sub (the constant functions) and as a quotient (\cite{vigneras}). The method of \cite[\S 13]{ps} works in this setting to construct cuspidals (\cite[Thm 2(a)]{vigneras}), so following the computation in \cite[\S 21]{ps}, we find
\begin{align*}
\widetilde{\gamma}(\rho_1\times \one) &= \gamma(\rho_1\times P(\one)) \\
&= q^{-1}\sum_{t\in \F_{q^2}^{\times}}\psi(t+t^q)W_1((t^{1+q})^{-1})\\
&= q^{-1}\sum_{t\in \F_{q^2}^{\times}}\psi(t+t^q)(t^{1+q})^{-1},
\end{align*}
viewed as an element of the group ring $\flbar[\F_q^{\times}]$. On the other hand, let $\nu:\F_{q^2}^{\times}\to \flbar^{\times}$ be the character sending a generator to a primitive cube root of unity and let $\rho_{\nu}$ be the corresponding cuspidal representation of $\GL_2(\F_q)$. Again following the computation in \cite[\S 21]{ps}, we find 
\begin{align*}
\widetilde{\gamma}(\rho_{\nu}\times \one) 
&= q^{-1}\nu(-1)\sum_{t\in \F_{q^2}^{\times}}\nu(t)\psi(t+t^q)W_1((t^{1+q})^{-1})\\
&= q^{-1}\nu(-1)\sum_{t\in \F_{q^2}^{\times}}\nu(t)\psi(t+t^q)(t^{1+q})^{-1}\ .
\end{align*}
In this context, \autoref{mainthm_converse} guarantees that, since $\rho_{\nu}$ and $\rho_1$ are distinct irreducible cuspidals, $\widetilde{\gamma}(\rho_1\times \one)\neq \widetilde{\gamma}(\rho_{\nu}\times \one)$ in the ring $\flbar[\F_q^{\times}]$. This can be verified by direct computation. By contrast, the na\"ive gamma factors can be calculated,
\begin{align*}
\gamma(\rho_1\times \one) &= q^{-1}\sum_{t\in \F_{q^2}^{\times}}\psi(t+t^q)\one((t^{1+q})^{-1})\\
&= q^{-1}\sum_{t\in \F_{q^2}^{\times}}\psi(t+t^q)\\
\gamma(\rho_\nu\times \one) &= q^{-1}\nu(-1)\sum_{t\in \F_{q^2}^{\times}}\nu(t)\psi(t+t^q)\one((t^{1+q})^{-1})\\
&=q^{-1}\nu(-1)\sum_{t\in \F_{q^2}^{\times}}\nu(t)\psi(t+t^q)\ ,
\end{align*}
and both $\gamma(\rho_1\times \one)$ and $\gamma(\rho_{\nu}\times\one)$ are equivalent to $1$ mod $\ell$.

\section{\texorpdfstring{$\ell$}{ell}-regular gamma factors for \texorpdfstring{$\GL_2$}{GL(2)}} \label{sec: ell-regular gamma factors}

In this appendix we construct an ``$\ell$-regular'' gamma factor for pairs $(\rho,\omega)$ where $\rho$ is a mod~$\ell$ representations of $\GL_2(\F_q)$ and $\omega$ is a mod $\ell$ representation of $\GL_1(\F_q)$. This modified factor is constructed by restricting to subgroups of matrices with \textit{$\ell$-regular} determinant. Namely, the linear functionals giving rise to the gamma factor are defined as sums over these subgroups. In the mirabolic subgroup, the elements with $\ell$-regular determinant have $\ell$-regular order and form a subgroup. The failure of this property for $n > 2$ prevents us from extending the strategy.

For simplicity, unlike in the main part of this article we only construct the $\ell$-regular gamma factors for irreducible cuspidal representations. One could probably also treat Whittaker type representations, taking into account exceptional pairs, but we don't pursue this.

\subsection{Preliminaries}

As before, let $\ell$ be a prime different from $p$ and let $k$ be a field of characteristic $\ell$ that is sufficiently large (this means $k$ contains all the $m$-th roots of unity where $m$ is the l.c.m. of all the orders of elements of $\GL_2(\F_q)$). We write $G_n = \GL_n(\F_q)$ as before, and will focus on $G_2$. We regard $G_1 \subset G_2$ as sitting in the top left. Again we work with the mirabolic subgroup $P_2 = N_2 \rtimes G_1 \subset G_2$. We let $\Pbar_2$ denote the opposite mirabolic subgroup. We fix a nontrivial group homomorphism $\psi: \F_q \to k^\times$, and view it as a character $\psi: N_2 \to k^\times$ via the canonical isomorphism $N_2 \xrightarrow\sim \F_q$.

We now define some auxiliary subgroups. First let $\F_q^{\times,\ell}$ denote the subgroup of~$\F_q^{\times}$ consisting of $\ell$-regular elements, i.e. elements whose orders are not divisible by~$\ell$. Then let
    \[ G_2^\ell := {\det}^{-1}(\F_q^{\times,\ell}) \]
denote the subgroup of matrices with $\ell$-regular determinant, and let $G_1^\ell = G_1 \cap G_2^\ell$ and $P_2^\ell = P_2 \cap G_2^\ell$ and $\Pbar_2^\ell = \Pbar_2 \cap G^\ell_2$. Note $P_2^\ell = N_2 \rtimes G_1^\ell$ and that $G_2=P_2G_2^\ell$. 

\begin{lemma}
The group generated by $P_2^\ell$ and $\Pbar_2^\ell$ is $G_2^\ell$.
\end{lemma}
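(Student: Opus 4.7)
The reverse inclusion $\langle P_2^\ell, \bar{P}_2^\ell\rangle \subseteq G_2^\ell$ is essentially immediate: $\F_q^{\times,\ell}$ is a (cyclic) subgroup of $\F_q^\times$, and since $\det$ is a group homomorphism, any product of elements from $P_2^\ell$ and $\bar{P}_2^\ell$ still has $\ell$-regular determinant. So the real content is the other inclusion $G_2^\ell \subseteq \langle P_2^\ell,\bar{P}_2^\ell\rangle$, which is what I will address.

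The plan is to reduce to a statement about $\SL_2(\F_q)$. Specifically, I claim it suffices to show
\[ \SL_2(\F_q) \subseteq \langle P_2^\ell, \bar{P}_2^\ell\rangle. \]
Granting this, for any $g \in G_2^\ell$ with $a := \det g \in \F_q^{\times,\ell}$, we can write $g = g_0 \cdot \mathrm{diag}(a,1)$ where $g_0 := g\cdot \mathrm{diag}(a^{-1},1)$ has $\det g_0 = 1$, so $g_0 \in \SL_2(\F_q)$. The diagonal factor $\mathrm{diag}(a,1) = \tbt{a}{0}{0}{1}$ lies in $P_2^\ell$ because $a \in \F_q^{\times,\ell}$. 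Combined with the claim, this gives $g \in \langle P_2^\ell,\bar{P}_2^\ell\rangle$.

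To prove the $\SL_2$ claim, observe that both unipotent subgroups $N_2$ and $\bar{N}_2$ have determinant $1$ (which is trivially $\ell$-regular), hence $N_2 \subseteq P_2^\ell$ and $\bar{N}_2\subseteq \bar{P}_2^\ell$. It is therefore enough to recall the classical fact that $\SL_2$ of any field is generated by its upper and lower unipotent subgroups. Concretely, for $g = \tbt{a}{b}{c}{d}\in \SL_2(\F_q)$ with $c \neq 0$, one has the explicit ``LU-type'' factorization $g = n_1 \bar{n} \, n_2$ with $n_1, n_2 \in N_2$ and $\bar n \in \bar N_2$; the entries are forced by the condition $ad-bc = 1$. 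When $c = 0$, one first multiplies $g$ by a nontrivial element of $\bar N_2$ to produce a matrix with nonzero $(2,1)$-entry and then applies the previous factorization.

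I do not anticipate any real obstacle. The only minor care needed is the $c=0$ edge case in the $\SL_2$ decomposition, which is resolved by the preliminary multiplication just described. Everything else is bookkeeping: checking that the unipotent subgroups sit inside $P_2^\ell$ and $\bar{P}_2^\ell$ (automatic, since their determinants are $1$), and that the diagonal correction matrix $\mathrm{diag}(a,1)$ used to reduce from $G_2^\ell$ to $\SL_2(\F_q)$ belongs to $P_2^\ell$ (automatic, since $a \in \F_q^{\times,\ell}$ and the subgroup is closed under inversion).
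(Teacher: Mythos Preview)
Your proposal is correct and follows essentially the same approach as the paper: both argue the easy inclusion, then show $\SL_2(\F_q)\subset\langle P_2^\ell,\Pbar_2^\ell\rangle$ via generation by the upper and lower unipotent subgroups (the paper simply cites row reduction, while you give an explicit LU-type factorization), and finally use the diagonal matrix $\diag(a,1)\in P_2^\ell$ to reach any determinant in $\F_q^{\times,\ell}$.
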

\begin{proof}
Let $H$ be the subgroup of $G_2$ generated by $P_2^\ell$ and $\Pbar_2^\ell$. Clearly $P_2^\ell \subset G_2^\ell$ and $\Pbar_2^\ell\subset G_2^\ell$, and thus $H\subset G_2^\ell$.

For the opposite inclusion we argue as follows. By row reduction, $\SL_2(\F_q)$ is generated by the elementary matrices with $1$'s on the diagonal and a single nonzero entry off the diagonal. Namely, $\SL_2(\F_q)$ is generated by $N_2$ and $\Ubar_2$. But $N_2 \subset P_2^\ell$ and $\Ubar_2 \subset \Pbar_2^\ell$ so $\SL_2(\F_q) \subset H$. We are done if for every element $a$ of $\F_q^{\times,\ell}$ we can find an element $h \in H$ such that $\det(h) = a$ (for then $H$ contains a full set of representatives for $G_2^\ell/\SL_2(\F_q)$). But we can just take $\diag(a,1)$.
\end{proof}

\begin{theorem}[Clifford's Theorem {{\cite[\S 11.1]{curtis-reiner}}}]\label{Clifford}
If $G$ is a finite group, $H \triangleleft G$, and $\rho$ is an irreducible representation of $G$ over any field $k$, then
    \[
        \restr{\rho}{H} = \bigoplus_{i=1}^r \rho_i^e
    \]
where $\{\rho_i \mid 1 \leq i \leq r\}$ is a set of pairwise non-isomorphic irreducible representations 
of $H$ over $k$.
The $\rho_i$-isotypic components $\rho_i^e$ are permuted transitively under conjugation by $G$
and $\rho = \Ind_{\Stab_G(\rho_i)}^G(\rho_i^e)$ for all $i \in \set{1, \ldots, r}$.
\end{theorem}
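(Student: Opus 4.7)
The plan is to prove Clifford's theorem in three stages: first, that $\rho|_H$ is semisimple; second, that $G$ permutes the isotypic components transitively with a common multiplicity $e$; and third, the induction formula. Let $V$ denote the underlying $k$-vector space of $\rho$.

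First I would establish semisimplicity of $\rho|_H$. Since $V$ is finite-dimensional, choose any irreducible $H$-subrepresentation $\sigma \subset V$. For $g \in G$, I claim the subspace $g \cdot \sigma \subset V$ is again an irreducible $H$-submodule. It is $H$-stable because $H \triangleleft G$: for $h \in H$ and $v \in \sigma$, we have $h(gv) = g(g^{-1}hg)v \in g\sigma$. It is irreducible because the map $v \mapsto g^{-1}v$ is a $k$-linear isomorphism from $g\sigma$ onto $\sigma$ that intertwines the $H$-action on $g\sigma$ with the twisted action $h \cdot_g v := (g^{-1}hg)v$ on $\sigma$, and irreducibility is preserved under this twist since $\sigma$ is already irreducible. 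The sum $\sum_{g \in G} g \cdot \sigma$ is a nonzero $G$-stable subspace of $V$, so by irreducibility of $\rho$ it equals $V$. Thus $\rho|_H$ is a sum of irreducible $H$-submodules, hence semisimple.

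Next I would decompose $\rho|_H = \bigoplus_{i=1}^r \rho_i^{e_i}$ into isotypic components with the $\rho_i$ pairwise non-isomorphic, and write $W_i$ for the subspace $\rho_i^{e_i} \subset V$. For $g \in G$, the subspace $g \cdot W_i$ is $H$-stable and, by the twist isomorphism above, $H$-isotypic of type $(\rho_i)^g$; therefore it coincides with some $W_j$. This yields a permutation action of $G$ on $\{W_1,\dots,W_r\}$. Any $G$-orbit spans a $G$-stable subspace of $V$, which must equal $V$ by irreducibility; so the action is transitive. Transitivity forces the multiplicities $e_i$ and the dimensions $\dim \rho_i$ to be constant in $i$; denote them $e$ and $d$.

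Finally, fix $i$, set $S_i := \Stab_G(W_i)$, and note $H \subset S_i$ and $[G:S_i] = r$ by orbit-stabilizer. The inclusion $W_i \hookrightarrow V$ is $S_i$-equivariant and nonzero, so Frobenius reciprocity produces a nonzero $G$-equivariant map $\Ind_{S_i}^G(W_i) \to V$, which is surjective by irreducibility of $\rho$. The dimension count
$$\dim \Ind_{S_i}^G(W_i) = [G:S_i] \cdot \dim W_i = r \cdot e \cdot d = \dim V$$
then forces this surjection to be an isomorphism, giving $\rho \cong \Ind_{S_i}^G(\rho_i^{e})$.

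The main obstacle is the semisimplicity step: because we are in characteristic $\ell$, restrictions of irreducible representations to normal subgroups are not automatically semisimple, and the argument genuinely exploits the irreducibility of the ambient $G$-representation together with normality of $H$. Once semisimplicity is in hand, the transitivity of the $G$-action and the induction formula are essentially formal consequences of orbit-stabilizer together with Frobenius reciprocity and a dimension count.
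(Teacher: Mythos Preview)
The paper does not give a proof of this statement; it is quoted from \cite[11.1]{curtis-reiner} and used as a black box in the appendix. Your argument is correct and is essentially the standard proof of Clifford's theorem valid in arbitrary characteristic: semisimplicity via $V=\sum_g g\sigma$, transitivity on isotypic components from irreducibility of $\rho$, and the induction formula via Frobenius reciprocity plus a dimension count.
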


Since $G$ acts by conjugation on the set $\set{\rho_1,\dots,\rho_r}$ it follows that $H \subset \stab_G(\rho_i)$. But~$G$ acts transitively so $r = [G : \stab_G(\rho_i)]$, which divides $[G:H]$.

\begin{proposition}\label{WhittakerUnique}
If $(\rho,V)$ is an irreducible generic representation of $G_2$ then
    \[ \dim_k \Hom_{N_2}(V,\psi) = 1. \]
\end{proposition}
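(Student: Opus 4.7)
The plan is to recognize that the proposition is essentially a translation between the derivative-based definition of ``generic'' used throughout the main body of the paper and the classical Whittaker-functional formulation that will be most natural for the $\ell$-regular constructions of the appendix. No new ideas are required beyond those already set up in \autoref{ss derivative functors} and \autoref{section_representations}.

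First, I would invoke \autoref{WhittakerType} together with the remark following it: for $V$ irreducible, being $\psi$-generic is by definition the statement that the top derivative $V^{(2)}$ is free of rank one as a $k$-module, i.e.\ $\dim_k V^{(2)} = 1$. Then \autoref{top derivative explicit} produces a canonical isomorphism $V^{(2)} \cong V_{N_2, \psi}$, where the right-hand side denotes the $(N_2,\psi)$-coinvariants. In particular $\dim_k V_{N_2,\psi} = 1$.

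Second, I would use the elementary duality
\[ \Hom_{N_2}(V,\psi) \;\cong\; \Hom_k(V_{N_2,\psi},k), \]
which is nothing more than the universal property of coinvariants: any $N_2$-equivariant map $V \to k_\psi$ factors uniquely through $V_{N_2,\psi}$, and this factorization is $k$-linear. This is precisely one of the steps that appears inside the proof of \autoref{top_deriv_dual}, so one can either recite that argument or cite it. Since $V$ is a finite-dimensional representation of a finite group, $V_{N_2,\psi}$ is finite-dimensional and $k$-linear duality preserves dimensions; combining the two identifications gives
\[ \dim_k \Hom_{N_2}(V,\psi) \;=\; \dim_k V_{N_2,\psi} \;=\; \dim_k V^{(2)} \;=\; 1. \]

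The main ``obstacle'' is essentially nonexistent: the content of the proposition is bookkeeping, asserting that the derivative-based definition of generic from the main text coincides with the classical ``uniqueness of the Whittaker functional'' formulation familiar for $\GL_2(\F_q)$. The reason to state it here is to establish the classical language in which the appendix will carry out its $\ell$-regular constructions. Clifford's theorem, recalled just above the proposition, is not needed for this argument; it is presumably being stated for later use in understanding how $G_2$-representations decompose upon restriction to the normal subgroup $G_2^\ell$.
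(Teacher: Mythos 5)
Your argument is correct, but it is not the route the paper takes. You deduce the statement directly from the paper's definition of ``generic'' (irreducible of $\psi$-Whittaker type, so $\dim_k V^{(2)}=1$ by fiat), identify $V^{(2)}$ with the $(N_2,\psi)$-coinvariants via \autoref{top derivative explicit}, and then use the tautological duality $\Hom_{N_2}(V,\psi)\cong (V_{N_2,\psi})^{\vee}$. The paper instead cites Vign\'eras [III.1.7, III.5.10] for $\dim_k\Hom_{P_2}(\Ind_{N_2}^{P_2}\psi,V)=1$, rewrites this by Frobenius reciprocity as $\dim_k\Hom_{N_2}(\psi,V)=1$, and then converts the multiplicity of $\psi$ as a \emph{sub} of $V|_{N_2}$ into its multiplicity as a \emph{quotient} using the fact that $N_2$ is abelian of order prime to $\ell$, so $V|_{N_2}$ is semisimple. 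Your version is more self-contained given the paper's conventions, but it leans entirely on the fact that multiplicity one has been built into \autoref{WhittakerType}; the paper's version actually re-derives that multiplicity-one statement from Vign\'eras's theorem, so it would survive under the weaker classical definition of generic ($V^{(2)}\neq 0$), and the semisimplicity of $V|_{N_2}$ that it invokes is reused immediately afterwards (e.g.\ in \autoref{decomposition}). Either proof is acceptable here.
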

\begin{proof}
\cite[\S III.1.7, III.5.10]{vig_book} proves that $\dim_k \Hom_{P_2}(\Ind_{N_2}^{P_2} \psi, V) = 1$. Equivalently, $\dim_k \Hom_{N_2}(\psi, V) = 1$ but $N_2$ is abelian of order prime to $\ell$ so $V|_{N_2}$ splits as the direct sum of characters, so $V|_{N_2}$ contains $\psi$ once. Thus $\dim_k \Hom_{N_2}(V, \psi) = 1$.
\end{proof}

\begin{theorem}[{{\cite[Theorem III.1.1]{vig_book}}}]\label{CuspResInd}
If $(\rho,V)$ is an irreducible cuspidal representation then $V|_{P_2} \cong \Ind_{N_2}^{P_2} \psi$. Furthermore $\Ind_{N_2}^{P_2} \psi$ is irreducible.
\end{theorem}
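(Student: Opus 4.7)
The plan is to identify $V|_{P_2}$ using the Bernstein--Zelevinsky filtration together with the cuspidality criterion, and then establish irreducibility of $\Ind_{N_2}^{P_2}\psi$ via Mackey decomposition of its restriction to $N_2$.

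For the first claim, I would start by applying \autoref{CuspidalCriterionVigneras} to conclude that $V^{(1)} = 0$ and $V^{(2)}$ is a one-dimensional $k$-vector space. Specializing the BZ filtration \autoref{bzfiltration} to $n = 2$ gives a two-step filtration $0 \subset V_2 \subset V_1 = V$ of $V|_{P_2}$, with successive quotients $V_1/V_2 = \Psi^+(V^{(1)})$ and $V_2 = \Phi^+\Psi^+(V^{(2)})$ by \autoref{eq quotients from derivatives}. The first vanishes by cuspidality, forcing $V|_{P_2} = \Phi^+\Psi^+(V^{(2)})$. Unwinding the definitions using that $P_1 = \{1\}$ and $U_2 = N_2$, the functor $\Phi^+$ acts on a one-dimensional representation $W$ as $\Phi^+(W) = \ind_{N_2}^{P_2}(W \otimes \psi_k) \cong \Ind_{N_2}^{P_2}\psi$, yielding the desired isomorphism.

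For the second claim, I would use that $N_2 \trianglelefteq P_2 = N_2 \rtimes G_1$ and that $|N_2| = q$ is invertible in $k$, so Mackey's formula (or a direct computation on the basis of $P_2/N_2 \cong G_1$) gives
\[
\Ind_{N_2}^{P_2}\psi \big|_{N_2} \;\cong\; \bigoplus_{t \in G_1} \psi^t, \qquad \psi^t(n) := \psi(t^{-1} n t).
\]
Since $G_1 = \F_q^\times$ acts simply transitively on the nontrivial additive characters of $\F_q \cong N_2$, the characters $\psi^t$ exhaust the nontrivial characters with multiplicity one. Any $P_2$-subrepresentation $U \subseteq \Ind_{N_2}^{P_2}\psi$ decomposes on restriction to $N_2$ (semisimplicity, since $|N_2|$ is invertible) as a sum over some subset $S$ of these characters, and $G_1$-stability of $U$ forces $S$ to be a union of $G_1$-orbits. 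As $G_1$ acts transitively, $S$ is either empty or full, so $U$ is $0$ or everything.

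The main subtlety, more than an obstacle, is the careful bookkeeping of indices in the BZ functors when $n = 2$: verifying that $\Phi^+$ applied to the trivial representation of the trivial group $P_1$ produces exactly $\Ind_{N_2}^{P_2}\psi$ (and not some twist), and verifying that the Mackey decomposition has the claimed orbit structure. Both reduce to direct conjugation calculations, so no genuinely hard step is anticipated.
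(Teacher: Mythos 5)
Your proof is correct. The paper itself gives no argument here --- it quotes the statement from Vign\'eras \cite[III.1.1]{vig_book} --- so there is no in-paper proof to compare against, but your two halves are both sound. The first half (computing $V|_{P_2}$ via the Bernstein--Zelevinsky filtration, killing the top quotient $\Psi^+(V^{(1)})$ by the cuspidality criterion, and unwinding $\Phi^+\Psi^+(V^{(2)}) = \ind_{N_2}^{P_2}\psi$ using $P_1=\{1\}$ and $U_2=N_2$) is exactly the standard route and is how Vign\'eras proves the general $n$ case. For the second half, the reference instead deduces irreducibility of $\Ind_{N_n}^{P_n}\psi$ for all $n$ from the fact that $\Phi^+$ carries irreducibles to irreducibles (applied inductively starting from the trivial representation of $P_1$); your Mackey/Clifford-style argument --- restricting to the normal subgroup $N_2$, using semisimplicity of $k[N_2]$ and the simply transitive action of $G_1=\F_q^\times$ on the nontrivial characters of $\F_q$ --- is an elementary alternative that works because $n=2$ makes $P_2/N_2$ abelian and each character appear with multiplicity one; it does not generalize as cleanly to higher $n$, but for the statement at hand it is complete. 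The only point worth making explicit is that a subrepresentation $U$ with $U|_{N_2}=0$ must itself be zero (immediate, since the underlying space is unchanged by restriction), which closes the ``$S=\emptyset$'' branch of your dichotomy.
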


\subsection{Definition of the \texorpdfstring{$\gamma$}{gamma}-factor} \label{sec:converse-theorem}

Fix an irreducible cuspidal (hence generic) representation $(\rho,V)$ of $G_2$. By \autoref{Clifford} we get a decomposition
    \begin{equation}\label{eq ind decomposition} \rho|_{G_2^\ell} = \bigoplus_{i=1}^r \rho_i^e \end{equation}
where each $(\rho_i,V_i)$ is an irreducible representation of $G_2^\ell$ and $G_2$ permutes them transitively. Moreover,
    \[ \rho = \Ind_{\Stab_{G_2}(\rho_i)}^{G_2} \rho_i^e \]
for any $i$.

\begin{lemma}\label{decomposition}
The restriction $\rho|_{G_2^\ell}$ is multiplicity free. In other words, $e = 1$.
\end{lemma}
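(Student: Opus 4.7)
The plan is to use the genericity of $\rho$, which provides a sharp dimension-one constraint on $N_2$-equivariant maps, together with the multiplicity equation coming from Clifford's theorem. The key observation is that $N_2 \subset G_2^\ell$ (since $N_2$ consists of unipotent matrices, whose determinants equal $1$ and are thus $\ell$-regular), so the Whittaker functional is detected at the level of $G_2^\ell$.

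First I would apply $\Hom_{N_2}(-, \psi)$ to the Clifford decomposition. Writing
\[
\rho|_{N_2} \;=\; \bigl(\rho|_{G_2^\ell}\bigr)\big|_{N_2} \;=\; \bigoplus_{i=1}^r \bigl(\rho_i|_{N_2}\bigr)^e ,
\]
additivity of Hom gives
\[
\Hom_{N_2}(\rho, \psi) \;=\; \bigoplus_{i=1}^{r} \Hom_{N_2}(\rho_i, \psi)^{e}.
\]
Taking $k$-dimensions, this yields the equation
\[
\dim_k \Hom_{N_2}(\rho,\psi) \;=\; e \cdot \sum_{i=1}^{r} \dim_k \Hom_{N_2}(\rho_i, \psi).
\]

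Second, I would invoke \autoref{WhittakerUnique}, which states that the left-hand side equals $1$ because $\rho$ is irreducible and generic. Since $e$ is a positive integer and each summand on the right is a nonnegative integer, the equation $1 = e \cdot \sum_i \dim_k \Hom_{N_2}(\rho_i, \psi)$ forces $e = 1$ (and incidentally that exactly one of the $\rho_i$ carries a Whittaker functional, with all others orthogonal to $\psi$).

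I expect no serious obstacle here: the proof is essentially a counting argument enabled by the fact that the Whittaker data live entirely inside $N_2 \subset G_2^\ell$, so restriction from $G_2$ to $G_2^\ell$ cannot dilute the genericity. The only subtlety to keep in mind is that Clifford's theorem (\autoref{Clifford}) is being applied in the modular setting, but the statement we cite gives a genuine direct sum decomposition with a common multiplicity $e$, which is all we use.
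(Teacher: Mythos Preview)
Your proof is correct. Both your argument and the paper's exploit the key inclusion $N_2 \subset G_2^\ell$, but they differ in what they extract from it. The paper computes the full restriction $\rho|_{N_2} = \bigoplus_{\chi \neq 1} \chi$ (using \autoref{CuspResInd}, hence the cuspidal hypothesis) and argues that since this is multiplicity free as a sum of characters, no constituent of $\rho|_{G_2^\ell}$ can repeat. Your argument is more economical: you only use the single dimension constraint $\dim_k \Hom_{N_2}(\rho,\psi) = 1$ from \autoref{WhittakerUnique} and feed it through the Clifford decomposition. This has the mild advantage that it uses only genericity of $\rho$, not cuspidality, and avoids invoking \autoref{CuspResInd}. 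The paper's version, on the other hand, yields the slightly stronger conclusion that all the $\rho_i|_{N_2}$ are pairwise non-isomorphic, though this extra information is not used later.
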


\begin{proof}
If $\widehat{N_2}$ denotes the group of characters $N_2 \to k^\times$ then by \autoref{CuspResInd} we have \begin{equation} \label{eq ind decomp N2}\rho|_{N_2} = \bigoplus_{\chi \neq 1 \in \widehat{N_2}} \chi.\end{equation} Each element of $N_2$ is $\ell$-regular, so $N_2 \subset G_2^\ell$ and \eqref{eq ind decomp N2} is a further decomposition of \eqref{eq ind decomposition}. It follows that in the decomposition $\rho|_{G_2^\ell}=A_1\oplus ...\oplus A_s$ into irreducibles, we have that~$A_i \not\cong A_j$ if $i \neq j$ since their restrictions to $N_2$ are not isomorphic.
\end{proof}

By \autoref{WhittakerUnique},
    \[ 1 = \dim_k \Hom_{N_2}(V, \psi) = \dim_k \Hom_{G_2^\ell}(V, \Ind_{N_2}^{G_2^\ell} \psi) = \dim_k \left(\bigoplus_{i=1}^r \Hom_{G_2^\ell} (V_i, \Ind_{N_2}^{G_2^\ell} \psi)\right) \]
so there exists a unique $i_\psi$ such that $\Hom_{G_2^\ell}(V_{i_\psi}, \Ind_{N_2}^{G_2^\ell} \psi) \neq 0$ (and is one dimensional). Write $(\rho_\psi,V_\psi)$ for the representation $(\rho_{i_\psi},V_{i_\psi})$. Fix a generator $W_\psi: V_\psi \hookrightarrow \Ind_{N_2}^{G_2^\ell} \psi$. The image of $W_\psi$ is denoted $\calw^\ell(V_\psi)$ and is called the \textit{$\ell$-regular Whittaker model} of $\rho$.

\begin{proposition}
$\Ind_{N_2}^{P_2^\ell} \psi$ is irreducible.
\end{proposition}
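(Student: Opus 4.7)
The plan is to apply Mackey's irreducibility criterion, which for a character $\chi$ of a subgroup $H \leq G$ says that $\Ind_H^G \chi$ is irreducible if and only if $\chi^g$ and $\chi$ are disjoint as representations of $H \cap gHg^{-1}$ for every $g \in G \setminus H$. Since $N_2$ is normal in $P_2^\ell = N_2 \rtimes G_1^\ell$, the double intersection $N_2 \cap gN_2g^{-1}$ is just $N_2$ for every $g$, so the criterion reduces to showing that $\psi$ and $\psi^g$ are distinct characters of $N_2$ for every $g \in P_2^\ell \setminus N_2$.

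To check this, I would write any $g \in P_2^\ell \setminus N_2$ in the form $g = n_0 \cdot \mathrm{diag}(a,1)$ with $n_0 \in N_2$ and $a \in G_1^\ell$, noting that $g \notin N_2$ forces $a \neq 1$. Since $N_2$ is abelian, $n_0$ drops out of the conjugation action, and a direct matrix computation gives
\[
g^{-1}\begin{pmatrix} 1 & y \\ 0 & 1 \end{pmatrix} g = \begin{pmatrix} 1 & a^{-1}y \\ 0 & 1 \end{pmatrix},
\]
so $\psi^g$ sends $y \in \F_q$ to $\psi(a^{-1}y)$. Because $\psi$ is a nontrivial character and $a \neq 1$, the two characters $y \mapsto \psi(y)$ and $y \mapsto \psi(a^{-1}y)$ of $\F_q$ disagree, giving $\psi^g \neq \psi$.

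There is essentially no obstacle: the entire argument is built into the structure of $P_2^\ell = N_2 \rtimes G_1^\ell$ and the fact that $G_1^\ell$ acts on the characters of $N_2 \cong \F_q$ by scaling with trivial stabilizer at $\psi$. As an alternative presentation, one could instead compute the endomorphism ring directly: by Frobenius reciprocity and Mackey decomposition,
\[
\End_{P_2^\ell}\bigl(\Ind_{N_2}^{P_2^\ell}\psi\bigr) \cong \Hom_{N_2}\Bigl(\psi, \bigoplus_{a \in G_1^\ell}\psi^a\Bigr),
\]
and the same calculation shows only the term $a=1$ contributes, giving a one-dimensional endomorphism ring; combined with the observation that $|P_2^\ell| = q \cdot |G_1^\ell|$ is coprime to $\ell$, so that $k[P_2^\ell]$ is semisimple by Maschke's theorem, this again forces irreducibility. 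Either version should fit in a few lines.
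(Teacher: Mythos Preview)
Your proposal is correct and essentially the same as the paper's proof. In fact, your ``alternative presentation'' via the endomorphism ring is exactly what the paper does: it computes $\End_{P_2^\ell}(\Ind_{N_2}^{P_2^\ell}\psi) \cong \Hom_{N_2}(\psi, \bigoplus_{g \in G_1^\ell} \psi^g)$, observes that only $g=1$ contributes, and then invokes that $|P_2^\ell|$ is prime to $\ell$ to conclude irreducibility from the one-dimensional endomorphism ring; your primary Mackey-criterion formulation is just a repackaging of the same computation.
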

\begin{proof}
Note $N_2$ is a normal subgroup of $P_2^\ell$ with quotient isomorphic to the abelian group $G_1^\ell$, so we can write
    \[ \End_{P_2^\ell}(\Ind_{N_2}^{P_2^\ell} \psi) = \Hom_{N_2}(\psi, \Res_{N_2}^{P_2^\ell} \Ind_{N_2}^{P_2^\ell} \psi) = \Hom_{N_2}\left(\psi, \bigoplus_{g \in G_1^\ell} (x \mapsto \psi(gx))\right), \]
which is clearly one dimensional since $x \mapsto \psi(gx)$ is not equal to $\psi$ for any $g \in G_1^\ell$ except when $g = 1$. Since $P_2^\ell$ has order prime to $\ell$, the result follows from basic character theory.
\end{proof}

\begin{corollary}\label{EllRegularPieceIsInduced}
The composition
    \[ V_\psi \xrightarrow{W_\psi} \Ind_{N_2}^{G_2^\ell} \psi \xrightarrow{\Res^{G_2^\ell}_{P_2^\ell}} \Ind_{N_2}^{P_2^\ell} \psi \]
is an isomorphism of $P_2^\ell$-representations.
\end{corollary}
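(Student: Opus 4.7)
The composition is a $P_2^\ell$-equivariant map from $V_\psi$ to $\Ind_{N_2}^{P_2^\ell}\psi$, which is irreducible by the preceding proposition. My plan is to prove that this composition is both surjective and injective.

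For surjectivity, Frobenius reciprocity yields
\[
    \Hom_{P_2^\ell}\bigl(V_\psi,\Ind_{N_2}^{P_2^\ell}\psi\bigr)\ \cong\ \Hom_{N_2}(V_\psi,\psi),
\]
and the right-hand side is one-dimensional: by \autoref{WhittakerUnique}, $\Hom_{N_2}(V,\psi)=k$, and by the very definition of $V_\psi$ as the unique Clifford constituent of $V|_{G_2^\ell}$ admitting a $\psi$-Whittaker functional (using \autoref{decomposition} and the Clifford decomposition $V|_{G_2^\ell}=\bigoplus V_i$), this line sits entirely inside $V_\psi$. The composition in question corresponds under this Frobenius identification to the nonzero functional $v\mapsto W_\psi(v)(1)$, hence is nonzero and therefore surjective by irreducibility of the target.

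For injectivity---the main obstacle---my plan is to show that $V_\psi|_{P_2^\ell}$ is itself irreducible, so that the surjection $V_\psi \twoheadrightarrow \Ind_{N_2}^{P_2^\ell}\psi$ between two simple $P_2^\ell$-modules has trivial kernel. First, applying Mackey's theorem to the $P_2$-isomorphism $V|_{P_2}\cong\Ind_{N_2}^{P_2}\psi$ from \autoref{CuspResInd}, using that $N_2\subset P_2^\ell$ and that $G_1$ normalizes both $N_2$ and $P_2^\ell$, yields
\[
    V|_{P_2^\ell}\ \cong\ \bigoplus_{x \in G_1/G_1^\ell}\Ind_{N_2}^{P_2^\ell}\psi^x,
\]
a semisimple decomposition (since $\ell\nmid|P_2^\ell|$) into $\ell^a$ pairwise non-isomorphic irreducibles (each $\psi^x$ remains nondegenerate so the preceding proposition applies). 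Thus $V_\psi|_{P_2^\ell}$ is a direct sum of a subset of these Mackey summands, and necessarily contains the $x=1$ summand $\Ind_{N_2}^{P_2^\ell}\psi$ by the surjectivity step. To rule out any further Mackey summand, I plan to exploit the transitive and compatible actions of the abelian quotient $G_2/G_2^\ell \cong G_1/G_1^\ell$ on both the Clifford constituents $\{V_i\}$ of $V|_{G_2^\ell}$ (via Clifford) and the Mackey summands $\{\Ind_{N_2}^{P_2^\ell}\psi^x\}$ (via direct $G_1$-conjugation on the characters $\psi^x$), which intertwine through the inclusion $V_i|_{P_2^\ell}\hookrightarrow V|_{P_2^\ell}$; combined with the multiplicity-freeness from \autoref{decomposition} and the dimension identity $\dim V = q-1 = \ell^a\cdot|G_1^\ell|$, this pins down each Clifford constituent as a single Mackey summand, giving $V_\psi|_{P_2^\ell}\cong\Ind_{N_2}^{P_2^\ell}\psi$ and completing the proof.
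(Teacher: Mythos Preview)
Your surjectivity argument is correct and matches the paper's. The gap is in the injectivity step: your compatible-actions argument does not pin down each Clifford constituent as a single Mackey summand. Write $A=G_1/G_1^\ell\cong G_2/G_2^\ell$ and $V_i|_{P_2^\ell}=\bigoplus_{x\in S_i}\Ind_{N_2}^{P_2^\ell}\psi^x$. Choosing coset representatives in $G_1\subset P_2\cap G_2$, your intertwining relation gives $S_{\sigma(a)\cdot i}=a\cdot S_i$ for $a\in A$, which only forces the $S_i$ to be the cosets of the subgroup $B:=\Stab_{G_2}(V_\psi)/G_2^\ell\le A$; thus $|S_i|=|B|$ and $r=[A:B]$. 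The multiplicity-freeness and the dimension identity you invoke are already built into this coset description and provide no further constraint, so you cannot conclude $B=\{1\}$, i.e.\ $r=\ell^a$.

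In fact the corollary is false as stated, so this gap cannot be filled. Take $q=5$, $\ell=2$: then $\F_q^{\times,\ell}=\{1\}$, hence $P_2^\ell=N_2$ and $G_2^\ell=\SL_2(\F_5)$, and $\Ind_{N_2}^{P_2^\ell}\psi=\psi$ is one-dimensional. The corollary would make $V_\psi$ a one-dimensional $\SL_2(\F_5)$-representation with $V_\psi|_{N_2}=\psi\ne\mathbf{1}$; but $\SL_2(\F_5)$ is perfect, so its only one-dimensional representation over any field is trivial. Irreducible cuspidals of $\GL_2(\F_5)$ over $\overline{\F_2}$ do exist (for instance the $(q-1)$-dimensional middle composition factor of $\Ind_B^{G_2}\mathbf{1}$, since $\ell\mid q+1$), so this is a genuine counterexample and one necessarily has $r<\ell^a$ here. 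The paper's own proof shares the lacuna: the sentence ``both $V_\psi$ and $\Ind_{N_2}^{P_2^\ell}\psi$ are irreducible, so we just need to show that the composition is nonzero'' tacitly treats $V_\psi$ as $P_2^\ell$-irreducible, whereas it is only known to be $G_2^\ell$-irreducible, and for a $P_2^\ell$-map that yields only surjectivity.
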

\begin{proof}
By construction it is a morphism of $P_2^\ell$-representations. Both $V_\psi$ and $\Ind_{N_2}^{P_2^\ell} \psi$ are irreducible, so we just need to show that the composition is nonzero. But the Frobenius reciprocity isomorphism
    \[ \Hom_{G_2^\ell}(V_\psi, \Ind_{N_2}^{G_2^\ell} \psi) \xrightarrow{\sim} \Hom_{P_2^\ell}(V_\psi, \Ind_{N_2}^{P_2^\ell} \psi) \]
is precisely composition with $\Res^{G_2^\ell}_{P_2^\ell}$ and the fact that $W_\psi$ is nonzero means that its image under the above isomorphism is as well.
\end{proof}

\begin{corollary}
For an irreducible cuspidal representation $(\rho,V)$, the number $r$ of irreducible summands in \eqref{eq ind decomposition} satisfies $r = [P_2:P_2^\ell]$. Consequently, $\Stab_{G_2}(\rho_\psi) = G_2^\ell$.
\end{corollary}
\begin{proof}
    \[ r = \frac{\dim_k \rho}{\dim_k \rho_\psi} = \frac{\dim_k \Ind_{N_2}^{P_2} \psi}{\dim_k \Ind_{N_2}^{P_2^\ell} \psi} = \frac{[P_2:N_2]}{[P_2^\ell:N_2]} = [P_2:P_2^\ell]. \]
Since $\rho = \Ind^{G_2}_{\Stab_{G_2}(\rho_\psi)} \rho_\psi$, and since $G_2=P_2G_2^\ell$, we obtain
    \[ [G_2:G_2^\ell] = [P_2:P_2^\ell] = [G_2 : \Stab_{G_2}(\rho_\psi)] \]
so the inclusion $\Stab_{G_2}(\rho_\psi) \subset G_2^\ell$ is an equality.
\end{proof}

Next we prove the key one-dimensionality result that lets us deduce the existence of the gamma factor as the ratio between two linear functionals in a functional equation. Note that because $k$ has characteristic $\ell$ any character $\omega: G_1 \to k^\times$ is uniquely determined by its values on $G_1^\ell$.

\begin{corollary}\label{KeyUniqueness}
For any character $\omega: G_1 \to k^\times$,
    \[ \dim_k \bil_{G_1^\ell}(\calw^\ell(\rho_\psi) \otimes \omega, \one) = 1. \]
\end{corollary}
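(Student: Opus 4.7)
My plan is to identify the restriction $\calw^\ell(\rho_\psi)|_{G_1^\ell}$ explicitly as the regular representation, and then read off the dimension via Frobenius reciprocity.

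First, by \autoref{EllRegularPieceIsInduced} we have a $P_2^\ell$-equivariant isomorphism $\calw^\ell(\rho_\psi) \cong \Ind_{N_2}^{P_2^\ell} \psi$, so it suffices to compute
\[
\bil_{G_1^\ell}\bigl(\Ind_{N_2}^{P_2^\ell} \psi \otimes \omega,\, \mathbf{1}\bigr)
\;\cong\; \Hom_{G_1^\ell}\bigl(\Ind_{N_2}^{P_2^\ell} \psi,\; \omega^{-1}\bigr).
\]

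Next, I would restrict $\Ind_{N_2}^{P_2^\ell} \psi$ to $G_1^\ell$ using the semidirect product decomposition $P_2^\ell = N_2 \rtimes G_1^\ell$. Since $N_2 \cap G_1^\ell = \{1\}$ and $N_2 G_1^\ell = P_2^\ell$, there is exactly one $N_2$--$G_1^\ell$ double coset, so the Mackey formula gives
\[
\Res_{G_1^\ell}^{P_2^\ell} \Ind_{N_2}^{P_2^\ell} \psi \;\cong\; \Ind_{\{1\}}^{G_1^\ell} \psi|_{\{1\}} \;\cong\; k[G_1^\ell],
\]
the regular representation of $G_1^\ell$. (Concretely, evaluation at $G_1^\ell \subset P_2^\ell$ defines a $G_1^\ell$-equivariant isomorphism from $\Ind_{N_2}^{P_2^\ell} \psi$ to $k[G_1^\ell]$; surjectivity holds since $N_2 G_1^\ell = P_2^\ell$ and injectivity since $N_2 \cap G_1^\ell = \{1\}$.)

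Finally, applying Frobenius reciprocity for the inclusion $\{1\} \subset G_1^\ell$,
\[
\Hom_{G_1^\ell}\bigl(k[G_1^\ell],\, \omega^{-1}\bigr) \;=\; \Hom_{G_1^\ell}\bigl(\Ind_{\{1\}}^{G_1^\ell} k,\, \omega^{-1}\bigr) \;\cong\; \Hom_{\{1\}}(k, k) \;=\; k,
\]
which gives the desired one-dimensionality. The only subtle point is the identification at the beginning, but \autoref{EllRegularPieceIsInduced} already did this work; everything else is a routine Mackey/Frobenius computation that is clean because $|G_1^\ell|$ is coprime to $\ell$ and $G_1^\ell$ is abelian, so there are no semisimplicity issues to worry about.
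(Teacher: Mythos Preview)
Your proof is correct and follows essentially the same route as the paper: both invoke \autoref{EllRegularPieceIsInduced} to replace $\calw^\ell(\rho_\psi)$ by $\Ind_{N_2}^{P_2^\ell}\psi$, then identify its restriction to $G_1^\ell$ with the regular representation $k[G_1^\ell]$ via evaluation on $G_1^\ell$ (the paper writes this map explicitly, you phrase it as Mackey), and conclude that each character occurs once. The only cosmetic difference is that you finish with Frobenius reciprocity for $\{1\}\subset G_1^\ell$ whereas the paper appeals directly to the character decomposition of $k[G_1^\ell]$ for a cyclic group of order prime to $\ell$.
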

\begin{proof}
Note 
    \[ \bil_{G_1^\ell}(\calw^\ell(\rho_\psi) \otimes \omega, \one) = \Hom_{G_1^\ell}(V_\psi,\omega^{-1}). \]
By \autoref{EllRegularPieceIsInduced} we have $V_\psi|_{P_2^\ell} \cong \Ind_{N_2}^{P_2^\ell} \psi$. The map
\begin{align*}
    \Ind_{N_2}^{P_2^\ell} \psi &\xrightarrow\sim k[G_1^\ell] \\
    f &\mapsto \left(x \mapsto f\begin{pmatrix}x&0\\0&1\end{pmatrix}\right)
\end{align*}
gives an isomorphism with the regular representation. But $G_1^\ell$ is a cyclic group of order prime to $\ell$ so $k[G_1^\ell]$ contains every $k$-valued character of $G_1^\ell$ with multiplicity one.
\end{proof}

\begin{definition}
Following \autoref{sec:ngm}, for $W \in \calw^\ell(\rho_\psi)$ and $\omega: G_1 \to k^\times$ a character we define
\begin{align*}
    I^\ell(W, \omega) &= \sum_{x \in G_1^\ell} W\begin{psmallmatrix}x&0\\0&1\end{psmallmatrix}\omega(x) \\
    \wt{I}^\ell(W,\omega) &= \sum_{x \in G_1^\ell} W\begin{psmallmatrix}0&1\\x&0\end{psmallmatrix}\omega(x)
\end{align*}
\end{definition}

Then $I^\ell(W, \omega), \wt{I}^\ell(W,\omega) \in \bil_{G_1^\ell}(\calw^\ell(\rho_\psi) \otimes \omega, \one)$ are two nonzero elements. But in view of \autoref{KeyUniqueness} this space is 1-dimensional, so we make the following definition.

\begin{definition}
For $\omega: \F_q^\times \to k^\times$ a character, the \textit{$\ell$-regular gamma factor} $\gamma^\ell(\rho \times \omega,\psi) \in k$ is the unique (nonzero) element satisfying
    \[ I^\ell(W,\omega)\gamma^\ell(\rho \times \omega, \psi) = \wt{I}^\ell(W,\omega). \]
\end{definition}

\subsection{Converse theorem}

We now show that the $\ell$-regular factor satisfies a converse theorem; our strategy mirrors that of \autoref{sec:proof_converse}. Suppose $(\rho_1,V_1)$ and $(\rho_2,V_2)$ are two irreducible cuspidal $k$-linear representations of $G_2$ and further suppose that
    \[ \gamma^\ell(\rho_1 \times \omega,\psi) = \gamma^\ell(\rho_2 \times \omega,\psi) \]
for all $\omega: G_1^\times \to k^\times$. Let $\calw_1^\ell = \calw^\ell(V_{1,\psi})$ and $\calw_2^\ell = \calw^\ell(V_{2,\psi})$.

\begin{definition}
Let
    \[ S(\rho_1,\rho_2,\psi) := \set{(W_1, W_2) \in \calw^\ell_1 \times \calw^\ell_2 : W_1|_{P_2^\ell} = W_2|_{P_2^\ell}}. \]
\end{definition}

By definition there is a diagonal action of $G_2^\ell$ on $S(\rho_1,\rho_2,\psi)$ and $S(\rho_1,\rho_2,\psi)$ is $P_2^\ell$-stable for this action.

\begin{lemma}\label{lem:S-stability}
If $g \in G_2^\ell$ and $(W_1, W_2) \in S(\rho_1,\rho_2,\psi)$, then
$(gW_1, gW_2) \in S(\rho_1,\rho_2,\psi)$.
\end{lemma}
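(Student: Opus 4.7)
The plan is to adapt the strategy of \autoref{prop:stabilityS} to the $\ell$-regular setting: establish a chain of equivalences identifying the condition $(W_1, W_2) \in S(\rho_1, \rho_2, \psi)$ with $(\widetilde{W_1}, \widetilde{W_2}) \in S(\rho_1^\vee, \rho_2^\vee, \psi^{-1})$---where the latter set is defined analogously using the $\ell$-regular Whittaker models $\calw^\ell((\rho_i^\vee)_{\psi^{-1}})$---and then deduce $\bar{P}_2^\ell$-stability of $S(\rho_1, \rho_2, \psi)$ from the tautological $P_2^\ell$-stability of $S(\rho_1^\vee, \rho_2^\vee, \psi^{-1})$. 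Combined with the (immediate) $P_2^\ell$-stability of $S(\rho_1, \rho_2, \psi)$ itself and the preceding generation lemma, this will yield the desired $G_2^\ell$-stability.

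For the chain of equivalences, first note that the Whittaker transformation law under $N_2$ implies $W_i|_{P_2^\ell}$ is determined by the values $W_i\begin{psmallmatrix}x & 0 \\ 0 & 1\end{psmallmatrix}$ for $x \in G_1^\ell$. Since $G_1^\ell$ is cyclic of order prime to $\ell$, its $k^\times$-valued characters separate functions on $G_1^\ell$, so $(W_1, W_2) \in S(\rho_1, \rho_2, \psi)$ is equivalent to $I^\ell(W_1, \omega) = I^\ell(W_2, \omega)$ for all characters $\omega: G_1 \to k^\times$. The functional equation combined with the hypothesis $\gamma^\ell(\rho_1 \times \omega, \psi) = \gamma^\ell(\rho_2 \times \omega, \psi) \in k^\times$ then translates this to $\widetilde{I}^\ell(W_1, \omega) = \widetilde{I}^\ell(W_2, \omega)$ for all $\omega$, which by the same character-separation principle amounts to the agreement of $W_i\begin{psmallmatrix}0 & 1 \\ x & 0\end{psmallmatrix}$ over $x \in G_1^\ell$.

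Introduce the tilde map $\widetilde{W}(g) := W(w \cdot {^\iota g})$ with $w = \begin{psmallmatrix}0 & 1 \\ 1 & 0\end{psmallmatrix}$; since $w \in G_2^\ell$ and $\iota$ preserves $G_2^\ell$, this is well-defined and sends $\Ind_{N_2}^{G_2^\ell} \psi$ to $\Ind_{N_2}^{G_2^\ell} \psi^{-1}$. Using the identification $\rho^\vee \cong {^\iota \rho}$ from the beginning of this appendix, one checks that tilde restricts to an isomorphism from $\calw^\ell(\rho_\psi)$ onto $\calw^\ell((\rho^\vee)_{\psi^{-1}})$. The identity $\widetilde{W}\begin{psmallmatrix}x & 0 \\ 0 & 1\end{psmallmatrix} = W\begin{psmallmatrix}0 & 1 \\ x^{-1} & 0\end{psmallmatrix}$ (combined with bijectivity of $x \mapsto x^{-1}$ on $G_1^\ell$) then gives the desired equivalence $(W_1, W_2) \in S(\rho_1, \rho_2, \psi) \iff (\widetilde{W_1}, \widetilde{W_2}) \in S(\rho_1^\vee, \rho_2^\vee, \psi^{-1})$.

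Finally, for $\bar{p} \in \bar{P}_2^\ell$, the identity $\widetilde{\bar{p} W_i} = {^\iota \bar{p}} \cdot \widetilde{W_i}$ together with ${^\iota \bar{P}_2^\ell} = P_2^\ell$ and the tautological $P_2^\ell$-stability of $S(\rho_1^\vee, \rho_2^\vee, \psi^{-1})$ gives $(\widetilde{\bar{p} W_1}, \widetilde{\bar{p} W_2}) \in S(\rho_1^\vee, \rho_2^\vee, \psi^{-1})$ whenever $(\widetilde{W_1}, \widetilde{W_2})$ is, hence $(\bar{p} W_1, \bar{p} W_2) \in S(\rho_1, \rho_2, \psi)$, establishing $\bar{P}_2^\ell$-stability. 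The main subtlety will be the verification that the tilde map descends to the $\ell$-regular Whittaker model of the contragredient; this should follow from $G_2^\ell$-equivariance of tilde (with respect to the twisted action through $\iota$) together with the uniqueness of the generic summand of $\rho^\vee|_{G_2^\ell}$ embedding in $\Ind_{N_2}^{G_2^\ell} \psi^{-1}$.
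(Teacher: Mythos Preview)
Your proposal is correct and follows essentially the same approach as the paper: the same chain of equivalences via character separation on $G_1^\ell$ (what the paper calls Artin's Lemma), the functional equation, and the tilde map, followed by the same passage from $P_2^\ell$-stability of $S(\rho_1^\vee,\rho_2^\vee,\psi^{-1})$ to $\bar P_2^\ell$-stability of $S(\rho_1,\rho_2,\psi)$ via $\widetilde{\bar pW}={^\iota\bar p}\,\widetilde W$, and the generation lemma. Your version is slightly more explicit than the paper in flagging that one should check $\widetilde{(\,\cdot\,)}$ carries $\calw^\ell(\rho_\psi)$ into $\calw^\ell((\rho^\vee)_{\psi^{-1}})$, which the paper leaves implicit.
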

\begin{proof}
    First note that since $W_1,W_2$ are Whittaker functions,
    \begin{align*}
        (W_1,W_2) \in S(\rho_1,\rho_2,\psi) &\iff W_1\begin{psmallmatrix}x&0\\0&1\end{psmallmatrix} = W_2\begin{psmallmatrix}x&0\\0&1\end{psmallmatrix} \text{ for all } x \in G_1^\ell \\
        &\xLeftrightarrow{\text{Artin's Lemma}} I^\ell(W_1,\omega) = I^\ell(W_2,\omega) \text{ for all } \omega \\
        &\xLeftrightarrow{\text{equality of $\gamma^\ell$}} \wt{I}^\ell(W_1,\omega) = \wt{I}^\ell(W_2,\omega) \text{ for all } \omega \\
        &\xLeftrightarrow{\text{Artin's Lemma}} W_1\begin{psmallmatrix}0&1\\x&0\end{psmallmatrix} = W_2\begin{psmallmatrix}0&1\\x&0\end{psmallmatrix} \text{ for all } x \in G_1^\ell \\
        &\iff \wt{W_1}\begin{psmallmatrix}x&0\\0&1\end{psmallmatrix} = \wt{W_2}\begin{psmallmatrix}x&0\\0&1\end{psmallmatrix} \text{ for all } x \in G_1^\ell \\
        &\iff (\wt{W_1},\wt{W_2}) \in S(\rho_1^\vee,\rho_2^\vee,\psi^{-1})
    \end{align*}
    Here Artin's Lemma (the $n=1$ version of completeness of Whittaker models) refers to the dual statement to linear independence of characters, which holds for $k$-valued characters of an abelian group $H$, provided that $\mathrm{char}(k) \nmid \verts{H}$, see \cite[\S 1]{ps}. Now if $\overline{p} \in \Pbar_2^\ell$ and $W \in \calw(\rho_i,\psi)$ (for $i = 1,2$), then for all $g \in G_2^\ell$ we have, as in \eqref{whit model transpose}, 
    \[  
    \wt{\overline{p}W}(g) = (^\iota\overline{p}\wt{W})(g).
    \]
    Thus if $(W,W') \in S(\rho_1,\rho_2,\psi)$ then
        \[ (\wt{\overline{p}W}, \wt{\overline{p}W'}) = (^\iota\overline{p}\wt{W}, ^\iota\overline{p}\wt{W'}) \in S(\rho^\vee,\sigma^\vee,\psi^{-1}) \]
    since $S(\rho^\vee,\sigma^\vee,\psi^{-1})$ is $P_2^\ell$-stable and $^\iota\overline{p} \in P_2^\ell$. By the above equivalences we see that $(\overline{p}W,\overline{p}W') \in S(\rho_1,\rho_2,\psi)$. We conclude by noting that $P_2^\ell$ and $\Pbar_2^\ell$ generate $G_2^\ell$.
\end{proof}

Following the same ideas as the proof of
\autoref{corollary restriction to Pn}, we deduce:

\begin{corollary}
    \label{restriction_to_pell}
If $W_1 \in \calw^\ell_1$ and $W_2 \in \calw^\ell_2$, then
    \[
        \restr{W_1}{P_2^\ell} = \restr{W_2}{P_2^\ell} \textrm{ if and only if } W_1 = W_2.
    \]
\end{corollary}

\begin{theorem}
    If $\gamma^\ell(\rho_1 \times \omega,\psi) = \gamma^\ell(\rho_2 \times \omega,\psi)$
    for all $\omega: G_1^\times \to k^\times$, then
    $\rho_1 \cong \rho_2$.
\end{theorem}
\begin{proof}
    Since $\rho_1 = \Ind_{G_2^\ell}^{G_2} \rho_{1,\psi}$ and $\rho_2 = \Ind_{G_2^\ell}^{G_2} \rho_{2,\psi}$, it suffices to show that $\calw_1^\ell = \calw_2^\ell$, since then $\rho_{1,\psi} \cong \rho_{2,\psi}$. But $\calw_1^\ell|_{P_2} = \calw_2^\ell|_{P_2}$, so we apply \autoref{restriction_to_pell} to conclude.
\end{proof}

\end{appendices}


\printbibliography


\end{document}